\numberwithin{equation}{section}
\newtheorem{theorem}{Theorem}[section]
\newtheorem{lemma}[theorem]{Lemma}
\newtheorem{proposition}[theorem]{Proposition}
\newtheorem{corollary}[theorem]{Corollary}
\theoremstyle{definition}
\newtheorem{definition}[theorem]{Definition}
\newtheorem{remark}[theorem]{Remark}
\newtheorem{asmp}[theorem]{Assumption}
\def\d{{\mathrm d}}
\def\E{{\mathbb E}}
\def\R{{\mathbb R}}
\def\N{{\mathbb N}}
\def\P{{\mathbb P}}
\def\FF{{\mathbb F}}
\def\PP{{\mathcal P}}
\def\GG{{\mathbb G}}
\def\G{{\mathcal G}}
\def\J{{\mathcal J}}
\def\A{{\mathcal A}}
\def\S{{\mathcal S}}
\def\F{{\mathcal F}}
\def\H{{\mathcal{H}}}
\newcommand{\indep}{\perp \!\!\! \perp}
\begin{document}
\title{It\^o's formula for flows of conditional measures on semimartingales}

\author{Xin Guo}
\address{Department of Industrial Engineering and Operations Research, University of California, Berkeley, Berkeley, California, USA.}
\email{xinguo@berkeley.edu}

\author{Jiacheng Zhang}
\address{Department of Industrial Engineering and Operations Research, University of California, Berkeley, Berkeley, California, USA.}
\email{jiachengz@berkeley.edu}

\date{}
\maketitle

\begin{abstract}
Motivated by recent developments of mean-field systems with common noise, this paper establishes It\^o’s formula for flows of conditional probability measures under a common filtration associated with general semimartingales. It generalizes existing works on flows of conditional measures on It\^o processes and flows of deterministic measures on general semimartingales.  It is derived via the  cylindrical function approach. 
One key technical component involves constructing conditionally independent copies of stochastic processes, which allows for establishing the equivalence between stochastic integrals with respect to the conditional laws of semimartingales and the conditional expectation of stochastic integrals with respect to copies of semimartingales. 

As an application, a class of control problems for jump-diffusions with partial observations is analyzed.
  
\end{abstract}

\section{Introduction}

\subsection*{It\^o's formula} The classical Itô's formula for semimartingales, the stochastic equivalent of the chain rule in calculus, is a cornerstone of stochastic analysis \cite{ito1951formula}. It intrinsically links partial differential equations (PDEs) with diffusion processes. Recent advancements in mean-field game and control theories \cite{buckdahn2017mean,cardaliaguet2019master, carmona2018probabilistic} have led to its extension to flows of probability measures, initially for diffusions \cite{buckdahn2017mean} and later for discontinuous semimartingales
(\cite{guo2023ito,talbi2023dynamic}). This generalized Itô's formula is crucial for deriving master equations in mean-field games and Bellman dynamic programming equations for McKean–Vlasov control problems, see \cite{carmona2018probabilistic} and the references therein.

 Most recent studies in mean-field theory involve controls and games with common noise (see for instance \cite{Ah,ARY,BZ,BFY,CDL,carmona2014master,CFS,CF,DV,GLL, hammersley2021weak,KT,Ko,kurtz1999particle,La,LW,Va})  or stochastic jump processes  (\cite{burzoni2020viscosity,fu2017mean,hafayed2014mean,hafayed2018optimal,hu2017singular,li2018mean}). In either case, It\^o's formula has been developed respectively in (\cite{buckdahn2017mean,carmona2018probabilistic}) and   (\cite{guo2023ito,talbi2023dynamic}). 
 In order to  analyze rigorously  the McKean–Vlasov dynamics with {\it both semimartingales and idiosyncratic and common noises}, it is necessary to establish the It\^o's formula for flows of {\it conditional laws} with semimartingales, which does not seem to  exist, to the best of our knowledge.  This is the primary focus of this paper.


\subsection*{Our work}
This paper establishes  It\^o's formula (Theorem \ref{mainthm}) for flow of conditional laws $\mu_t=$Law$(X_t|\G_t)$ driven by (possible discontinuous) semimartingales $\{X_t\}_{t\in[0,T]}$, with a general form of common noise represented by a sub-filtration $\GG=(\G_t)_{t\in[0,T]}$ and $\G=\G_T$. 

One key step  is to analyze the  It\^o integral $\int_0^t\eta_s\d X_s^\G$ with  $\{X_t^\G:=\E[X_t|\G]\}_{t\in[0,T]}$ over some appropriate adapted process $\{\eta_t\}_{t\in[0,T]}$. This is crucial as the integral of the conditional expectation such as $\int_0^t\eta_s\d X^\G_s$ is not necessarily equivalent to  the conditional expectation of the  integral of the form $\E\big[\int_0^t\eta_s\d X_s|\G\big]$ 
(See discussions and examples in Section \ref{section:condcopy}). 
Our analysis is  built on two key technical components: one is to construct   conditionally independent copies of general stochastic processes (Theorem \ref{thm:condindepcopy}),  the other is to establish the equivalence between the conditional expectation of stochastic integrals with respect to semimartingales and stochastic integrals
with respect to the conditional law of semimartingales, as well as    the equivalence between the quadratic variation of the conditional expectation of a semimartingale and the conditional expectation of
the quadratic variation of its two conditionally independent copies (Theorem \ref{thm:condprocess}).

With these key technical components, It\^o's formula for flows of conditional laws on semimartingales is established 
(Theorem \ref{mainthm}):  the result is first established for cylindrical functions, defined as smooth mean-field functions with integrable representations (see Definition \ref{def:cylin}). Since these functions are dense in the desired function space, the general form is then obtained by a localization argument combined with suitable versions of the dominated convergence theorem.

{    This  It\^o's formula (Theorem \ref{mainthm}) is applied to analyze control problems with partially observed jump-diffusions. An appropriate form of the verification theorem \ref{thm:verification} is derived, which recovers the results in \cite{bandini2019randomized} as  special cases. }

\subsection*{Related works} 
{  
One of the key technical elements is the construction of conditionally independent copies of stochastic processes. This concept  has been  treated informally, for instance, when defining a semimartingale  by stochastic differential equations involving both idiosyncratic and common noise in \cite{carmona2018probabilistic}.

Our construction of conditionally independent copies of general stochastic processes is inspired by the proofs of Yamada-Watanabe theorem (see for instance \cite{karatzas-shreve,kurtz2007yamada}) and the theory of weak and strong solutions of stochastic differential equations (see for instance \cite{jacod1981weak,kurtz2014weak}). 
Most existing methods specify the common information $\G$ as a stochastic input, and make use of the existence of the regular conditional distribution to construct an extended probability space. In contrast, we  construct  conditionally independent copies directly, rather than utilizing the regular conditional distribution. {(See the discussion at the beginning of Section \ref{sec:proofcondcopy} for a detailed comparison)}.

Our construction approach  generalizes the construction of conditionally independent copies to a broad class of  semimartingales and allows for  an arbitrary  sub $\sigma$-algebra. 
This result  may be of independent theoretical interest, besides its essential role in our derivation of It\^o's formula for flow of conditional laws on semimartingales.

A crucial assumption in our analysis is a condition on the filtration known as the compatibility condition.
This condition appears in various contexts under different names, such as \textit{immersion} \cite{jeanblanc2010immersion}, \textit{very good extensions} \cite{jacod1981weak}, and \textit{natural extensions} \cite{nicole1987compactification}. In the theory of  filtration enlargement, it is known as the \textit{H-hypothesis} (\cite{bremaud1978changes,elliott2000models}),  which plays a central role for characterizing semimartingale properties with filtration expansion or shrinkage.  Following \cite{kurtz2014weak}, we adopt  the term compatibility to be aligned with recent literature on mean-field games (\cite{beiglbock2018denseness,carmona2018probabilistic,carmona2016mean,carmona2017mean,lacker2016general}). 
}


Finally, it is worth pointing out that there are several methods to derive It\^o’s formula and its variants, including the time discretization  approach in \cite{buckdahn2017mean} and  \cite{li2018mean} for mean-field jump diffusions, the density approach in  \cite{cardaliaguet2019master} using the Fokker–Planck equation, and the particle approximation approach of \cite{carmona2018probabilistic,chassagneux2022probabilistic}, \cite{dos2023ito,reis2021relation}  by flows of empirical measures.
The cylindrical function approach has proved appropriate and powerful  for analysis of general semimartingales. It is  initially explored for Fleming–Viot processes \cite{fleming1979some},  later adopted in the  analysis of 
polynomial diffusions  \cite{cuchiero2019probability}, and for It\^o's formulas on flow of measures with   (discontinuous) semimartingales \cite{guo2023ito} and for 
non-anticipative maps of c\`adl\`ag rough paths \cite{cuchiero2025functional}.

\subsection*{Notation} Throughout the paper, we will adopt the following notations, unless otherwise specified.
\begin{itemize}
    \item { We use the bold letter $\bm x,\bm X$ to denote a (random) vector with multiple dimensions or a matrix depending on the context. $|\bm X|$ denotes the Euclidean norm of a vector $\bm X\in\R^d$, or the Frobenius norm of a matrix.}
    \item $\PP(\R^d)$ denotes the space of probability measures on $\R^d$ with the topology of weak convergence. $\PP_p(\R^d)$ for $p\in[1,\infty)$ denotes the probability measures with finite $p$-th moment, equipped with the Wasserstein-$p$ metric.
    \item $C^k(\R^d)$ is the set of all $k$-th differentiable functions on $\R^d$ with continuous derivatives up to the $k$-th order, and  $C_b^k(\R^d)\subset C^k(\R^d)$ is the set of all $k$-th differentiable functions on $\R^d$ with bounded and continuous derivatives up to the $k$-th order, with the convention $C(\R^d)=C^0(\R^d)$ and $C_b(\R^d)=C^0_b(\R^d)$.
    \item For a $d$-dimensional vector-valued random variable $\bm X$, $\|\bm X\|_{L^p}$ represents its $p$-th moment, i.e., $\|\bm X\|_{L^p}=\E[|\bm X|^p]^{\frac1p}$.
    \item For vectors $\bm a=(a_i)_{i=1,..,d},\bm b=(b_i)_{i=1,..,d}\in\R^d$, denote $\bm a\cdot\bm b=\sum_{i=1}^da_ib_i$. For matrices $\bm C=(C_{ij})_{i,j=1,..,d},\bm D=(D_{ij})_{i,j=1,..,d}\in\R^{d\times d}$,  denote $\bm C:\bm D=\sum_{i,j=1}^dC_{ij}D_{ij}$.
    \item For $\mu \in \PP(\R^d)$ and $\varphi : \R^d\to \R$ such that $\int_{\R^d}|\varphi(\bm x)|\d\mu(\bm x)<\infty$,  set
    $$\langle \mu,\varphi\rangle:=\int_{\R^d}\varphi(\bm x)\d\mu(\bm x).$$
    \item { For a c\`adl\`ag process $\bm X_t$, we adopt the standard notation:
    $$
    \bm X_{t-}:=\lim_{s\nearrow t}\bm X_s,\quad \Delta \bm X_t:=\bm X_t-\bm X_{t-},
    $$
    and $\bm X^c$ to represent the continuous part of the process.}
\end{itemize}


\section{Assumptions and main result}
Fix $T>0$ and $ d\in\mathbb N^{+}$.  
Consider two $\R^d$-valued $\FF$-semimartingales $\bm X=(\bm X_t)_{t\in[0,T]}$ and $\bm Y=(\bm Y_t)_{t\in[0,T]}$ 
on a completed filtered probability space $(\Omega,\F,\FF=(\F_t)_{t\in[0,T]},\P)$, and $\mu=\big\{\mu_t:=\text{Law}(\bm X_t|\G_t)\big\}_{t\in[0,T]}$ for  a sub-filtration $\GG=(\G_t)_{t\in[0,T]}\subset \FF$.  
Our focus is to establish an It\^o's formula for the  functional $\Phi(\mu_t, \bm Y_t)$, with  $\Phi:\PP_p(\R^d)\times\R^d\to\R$ for some $p\in[2,\infty)$  under suitable regularity conditions.

\subsection{Assumptions and preliminary tools}
Throughout the paper, we will make the following assumptions on the semimartingales $\bm X$ and $\bm Y$, the filtrations $\GG$ and $\FF$, and the function $\Phi$. These are standard assumptions to ensure that stochastic integrals are properly defined (\cite{protter2005stochastic}) with the presence of common noise (\cite{lacker2022superposition}), and for flow of measures on semimartingales (\cite{cox2024controlled,guo2023ito}).

First, we specify the assumptions on the semimartingales $\bm X$ and $\bm Y$.
\begin{asmp}\label{asp:semi}
For some $1\leq p\leq \infty$,
\begin{itemize}
    \item    
\begin{equation}\label{assumpt_XY}
    \begin{aligned}
    \big\|\bm X\big\|_{\H^p}:=\inf_{\bm M,\bm V}\bigg\||\bm X_0|+\sqrt{[\bm M,\bm M]_T}+\int_0^T\big|\d \bm V_s\big|\bigg\|_{L^p}<\infty,
    \\
    \big\|\bm Y\big\|_{\H^p}:=\inf_{\bm N,\bm U}\bigg\||\bm Y_0|+\sqrt{[\bm N,\bm N]_T}+\int_0^T\big|\d \bm U_s\big|\bigg\|_{L^p}<\infty,
    \end{aligned}
\end{equation}
where the infimum is taken over all possible decompositions  
\begin{equation*}
    \bm X_t = \bm X_0 + \bm M_t + \bm V_t\text{ and }\bm Y_t = \bm Y_0 + \bm N_t + \bm U_t,
\end{equation*} 
where $\bm V=(\bm V_t)_{t\in[0,T]}$ and $\bm U=(\bm U_t)_{t\in[0,T]}$ are adapted c\`adl\`ag processes of  finite variation with $\bm V_0 = \bm U_0 = \bm 0$, and $\bm M=(\bm M_t)_{t\in[0,T]}$ and  $\bm N=(\bm N_t)_{t\in[0,T]}$ are $\FF$-local martingales  such that $\bm M_0 = \bm N_0 = \bm 0$.
\item 
\begin{equation}\label{assumpt_XY2}
    \E\bigg[\Big(\sum_{0\leq s\leq T}|\Delta \bm X_s|\Big)^p\bigg]+\E\bigg[\Big(\sum_{0\leq s\leq T}|\Delta \bm Y_s|\Big)^p\bigg]<\infty.
\end{equation}
\end{itemize}
\end{asmp}


There are two propositions that are necessary for the subsequent analysis, especially 
for the localization argument in establishing the It\^o's lemma for the flow of conditional measures on semimartingales, as will be clear in Section \ref{subsec:local}. 

\begin{proposition}[{\cite[Section V, Theorem 2]{protter2005stochastic}}]\label{prop:sleqh}
For any $1\leq p\leq\infty$, define a norm on the space $D$ of all $\R$-valued adapted c\`adl\`ag processes, such that for $H=(H_t)_{t\in[0,T]}\in D$,  

\begin{equation*}
    \|H\|_{\S^p}:=\Big\|\sup_{0\leq t\leq T}|H_t|\Big\|_{L^p}.
\end{equation*}
Then, there exists a constant $c_p$ depending only on $p$ such that
\begin{equation*}
    \|H\|_{\S^p}\leq c_p\|H\|_{\H^p},
\end{equation*}
for any $\R$-valued semimartingale $H$.
\end{proposition}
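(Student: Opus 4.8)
The plan is to reduce the statement to the Burkholder--Davis--Gundy (BDG) inequality applied to the local-martingale part of an arbitrary $\H^p$-decomposition of $H$, and then to optimize over all such decompositions.

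First I would fix an arbitrary decomposition $H_t = H_0 + M_t + V_t$, with $M$ an $\FF$-local martingale satisfying $M_0 = 0$ and $V$ an adapted c\`adl\`ag process of finite variation with $V_0 = 0$. Pointwise in $\omega$ and uniformly in $t\in[0,T]$,
\begin{equation*}
  |H_t| \le |H_0| + \sup_{0\le s\le T}|M_s| + \sup_{0\le s\le T}|V_s| \le |H_0| + \sup_{0\le s\le T}|M_s| + \int_0^T|\d V_s|,
\end{equation*}
since $|V_t| = |V_t - V_0|$ is dominated by the total variation of $V$ on $[0,T]$. Taking $\sup_{t}$ and then $L^p$ norms, and applying the triangle inequality in $L^p$,
\begin{equation*}
  \|H\|_{\S^p} \le \big\||H_0|\big\|_{L^p} + \Big\|\sup_{0\le s\le T}|M_s|\Big\|_{L^p} + \Big\|\int_0^T|\d V_s|\Big\|_{L^p}.
\end{equation*}

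Next I would control the middle term by BDG: for $1\le p<\infty$ there is a universal constant $C_p$ with $\big\|\sup_{s\le T}|M_s|\big\|_{L^p} \le C_p\big\|\sqrt{[M,M]_T}\big\|_{L^p}$. BDG holds for local martingales, with both sides possibly infinite, which is harmless because we ultimately take an infimum. The one small point requiring care is the passage from true to local martingales: apply BDG along a localizing sequence of stopping times $\tau_n\uparrow T$ to the stopped martingales $M^{\tau_n}$, then let $n\to\infty$ using monotone convergence on $\sup_{s\le\tau_n}|M_s|$ and on $[M,M]_{\tau_n}$.

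Finally, combining the two displays and using that $|H_0|$, $\sqrt{[M,M]_T}$, and $\int_0^T|\d V_s|$ are each nonnegative, hence each bounded in $L^p$ by $\big\||H_0| + \sqrt{[M,M]_T} + \int_0^T|\d V_s|\big\|_{L^p}$, we get
\begin{equation*}
  \|H\|_{\S^p} \le (2+C_p)\,\Big\||H_0| + \sqrt{[M,M]_T} + \int_0^T|\d V_s|\Big\|_{L^p}.
\end{equation*}
Since the decomposition was arbitrary, taking the infimum over all admissible $(M,V)$ on the right-hand side yields $\|H\|_{\S^p}\le c_p\|H\|_{\H^p}$ with $c_p:=2+C_p$. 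The only genuine input is BDG; the rest is the triangle inequality and the definition of the $\H^p$ norm, so I do not expect a real obstacle here, the only delicate step being the localization argument used to legitimately invoke BDG for a local (rather than true) martingale.
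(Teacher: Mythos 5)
Your proof is correct and is essentially the standard argument behind the cited result: the paper itself offers no proof of this proposition, only the reference to Protter, and the proof there likewise combines the triangle inequality for an arbitrary $\H^p$-decomposition with the Burkholder--Davis--Gundy inequality for the local-martingale part (valid for c\`adl\`ag local martingales when $1\leq p<\infty$, with localization handled as you describe) before taking the infimum over decompositions. No gaps.
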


\begin{proposition}[Emery inequality, {\cite[Section V, Theorem 3]{protter2005stochastic}}]\label{prop:emery}
Let $Z$ be an $\R$-valued  semimartingale, $H$ be an $\R$-valued adapted c\`adl\`ag process,
and $\frac1p+\frac1q=\frac1r$ ($1\leq p,q\leq \infty)$, then
\begin{equation*}
\Big\|\int_0^\cdot H_{s-}\d Z_s\Big\|_{\H^r}\leq\|H\|_{\S^p}\|Z\|_{\H^q}. 
\end{equation*}
\end{proposition}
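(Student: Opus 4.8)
The plan is to fix one semimartingale decomposition of $Z$, bound the two ingredients of the $\H^r$-norm of the stochastic integral pathwise, and then conclude with a single application of H\"older's inequality before passing to an infimum. First I would fix an arbitrary decomposition $Z_t=Z_0+M_t+V_t$ admissible in the definition of $\|Z\|_{\H^q}$, i.e. $M$ an $\FF$-local martingale with $M_0=0$ and $V$ adapted, c\`adl\`ag, of finite variation with $V_0=0$. Since $H$ is adapted and c\`agl\`ad it is predictable and locally bounded, so $\int_0^\cdot H_s\,dM_s$ is again an $\FF$-local martingale vanishing at $0$, while the pathwise integral $\int_0^\cdot H_s\,dV_s$ is adapted, c\`adl\`ag, of finite variation, and vanishes at $0$. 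Hence
\[
\int_0^\cdot H_s\,dZ_s \;=\; \int_0^\cdot H_s\,dM_s \;+\; \int_0^\cdot H_s\,dV_s
\]
is an admissible decomposition of the semimartingale $\int_0^\cdot H_s\,dZ_s$, whose initial value is $0$.

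Next I would estimate the two terms of the $\H^r$-norm along this decomposition. Writing $H^\ast:=\sup_{0\le s\le T}|H_s|$ and using the identity $\big[\int H\,dM,\int H\,dM\big]_T=\int_0^T H_s^2\,d[M,M]_s$ together with monotonicity of integration against the increasing process $[M,M]$, one gets $\sqrt{[\int H\,dM,\int H\,dM]_T}=\big(\int_0^T H_s^2\,d[M,M]_s\big)^{1/2}\le H^\ast\sqrt{[M,M]_T}$, and likewise the total variation of $\int_0^\cdot H\,dV$ on $[0,T]$ equals $\int_0^T|H_s|\,|dV_s|\le H^\ast\int_0^T|dV_s|$. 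Adding these and recalling that the initial value vanishes,
\[
\sqrt{\Big[\textstyle\int H\,dM,\int H\,dM\Big]_T}+\int_0^T\Big|d\big(\textstyle\int H\,dV\big)_s\Big|
\ \le\ H^\ast\Big(|Z_0|+\sqrt{[M,M]_T}+\int_0^T|dV_s|\Big).
\]

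Then I would take $L^r$-norms of both sides and apply H\"older's inequality with exponents $p/r$ and $q/r$, which is legitimate since $\tfrac1r=\tfrac1p+\tfrac1q$ (with the usual conventions when $p$ or $q$ equals $\infty$). This bounds the right-hand side by $\|H^\ast\|_{L^p}\big\||Z_0|+\sqrt{[M,M]_T}+\int_0^T|dV_s|\big\|_{L^q}=\|H\|_{\S^p}\big\||Z_0|+\sqrt{[M,M]_T}+\int_0^T|dV_s|\big\|_{L^q}$. Since $\big\|\int_0^\cdot H_s\,dZ_s\big\|_{\H^r}$ is the infimum over admissible decompositions and we have just exhibited one whose norm is at most this quantity, and since the decomposition $Z=Z_0+M+V$ was arbitrary, taking the infimum over all decompositions of $Z$ yields $\big\|\int_0^\cdot H_s\,dZ_s\big\|_{\H^r}\le\|H\|_{\S^p}\|Z\|_{\H^q}$.

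The step I expect to demand the most care is the first one: checking that a c\`agl\`ad adapted integrand is locally bounded, so that $\int_0^\cdot H_s\,dM_s$ is genuinely an $\FF$-local martingale and the displayed decomposition of $\int H\,dZ$ is admissible, and that the quadratic-variation identity and the total-variation computation are stable under the associated localization. Everything after that is bookkeeping plus the single H\"older estimate.
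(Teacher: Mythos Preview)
The paper does not give its own proof of this proposition; it simply cites \cite[Section V, Theorem 3]{protter2005stochastic}. Your argument is correct and is exactly the proof that appears in Protter: fix a decomposition $Z=Z_0+M+V$, use that $H$ c\`agl\`ad adapted is predictable and locally bounded so that $\int H\,dM$ is a local martingale and $\int H\,dV$ has finite variation, bound $\sqrt{[\int H\,dM]_T}\le H^\ast\sqrt{[M,M]_T}$ and $\int_0^T|d(\int H\,dV)_s|\le H^\ast\int_0^T|dV_s|$ pathwise, apply H\"older with $\tfrac1r=\tfrac1p+\tfrac1q$, and take the infimum over decompositions of $Z$.
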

\begin{remark}
Propositions \ref{prop:sleqh} and \ref{prop:emery} also hold for $\R^d$-valued semimartingales with  appropriate forms in $d\in\N^+$.
\end{remark}

Moreover, a proper form of dominated convergence theorem has been established as follows.

\begin{lemma}\label{lem:CDCT}{(\cite[Page 273, Lemma]{protter2005stochastic})}
    Let $p, q, r$ be given such that $\frac1p+\frac1q=\frac1r$ where $1 < r < +\infty$. Let $Z$ be a semimartingale such that $\|Z\|_{\mathcal{H}_q}<+\infty$, and let $\{H^n=(H^n_t)_{t\in[0,T]}\}_{n\in\N}$ and $Y=(Y_t)_{t\in[0,T]}$ be semimartingales such that $\|H^n\|_{\S^p}<+\infty$ for all $n\in\N$, $\|Y\|_{\S^p}<+\infty$ and $|H^n_t|\leq Y_t$. Suppose $\lim_{n\to\infty} H^n_{t-}(\omega) = 0$, for all $(t,\omega)$. Then 
    $$\lim_{n\to\infty} \bigg\|\int_0^TH^n_{s-}\d Z_s\bigg\|_ {\H^r}= 0.$$
\end{lemma}

Next, we impose the compatibility condition for  filtrations, in the following sense.


\begin{asmp}[Compatibility]\label{asp:filt}
    The sub-filtration $\GG\subset\FF$ satisfies the compatibility condition, if for each $0\leq t\leq T$,  $\F_t$ and $\G_T$ are conditionally independent given $\G_t$, expressed as 
$\F_t\indep\G_T\big|\G_t.$
\end{asmp}
{ 
\begin{remark}
\label{rmk:hypothesis}

 There are a number of alternative characterizations for the compatibility condition, such as the  \textit{immersion} or the \textit{H-hypothesis}, in the literature of filtration enlargement and shrinkage (see for instance \cite{bremaud1978changes,elliott2000models,jeanblanc2010immersion}). Given  two filtrations $\GG = (\G_t)_{t\in[0,T]}$ and $\FF=(\F_t)_{t\in[0,T]}$ defined on the same space, with $\GG\subset\FF$, the following are  known to be equivalent (\cite[Theorem 3]{bremaud1978changes}):
\begin{itemize}
    \item $\F_t\indep\G_T|\G_t$, for all $0\leq t\leq T.$ (Compatibility condition).
    \item Every square integrable $\GG$-martingale is  an $\FF$-martingale. (H-hypothesis).
    \item $\E[Z|\G_t ]$ = $\E[Z |\F_t ]$ a.s., for each $t \in [0,T]$ and each bounded $\F_T$-measurable random variable $Z$.
\end{itemize}
In the context of mean-field games,  this compatibility assumption ensures a weak closure of adapted processes, as  first noted in \cite[Lemma 3.11]{carmona2016mean} and  stated more explicitly in \cite[Theorem 5.4]{beiglbock2018denseness}. 
\end{remark}}


Moreover, as shown in the following lemma, this compatibility condition allows us to define a c\`adl\`ag version of $\mu_t$ by taking $\mu_t=\text{Law}(\bm X_t|\G_T)$, which will be used throughout the paper.
\begin{lemma}
    Under Assumption \ref{asp:filt}, $\text{Law}(\bm X_t|\G_T)=\text{Law}(\bm X_t|\G_t)$ almost surely.
\end{lemma}
\begin{proof}
It suffices to show that 
 for any bounded $\G_T$- measurable random variable $A$, 
    $
    \E\big[\varphi(\bm X_t)A\big]=\E\Big[\E\big[\varphi(\bm X_t)|\G_t\big]A\Big].$
    This is guaranteed by  $\F_t\indep\G_T|\G_t$ and      \begin{align*}\E\big[\varphi(X_t)A\big]&=\E\Big[\E\big[\varphi(X_t)A|\G_t\big]\Big]=\E\Big[\E\big[\varphi(X_t)|\G_t\big]\E\big[A|\G_t\big]\Big] 
\\&=\E\Big[\E\Big[\E\big[\varphi(X_t)|\G_t\big]A\big|\G_t\Big]\Big]=\E\Big[\E\big[\varphi(X_t)|\G_t\big]A\Big].\end{align*}
\end{proof}

 Finally, we will assume $\Phi \in 
C^{2,2}(\PP_p(\R^d)\times\R^d)$ 
according to the following definition.

\begin{definition}[$C^{2,2}(\PP_p(\R^d)\times\R^d)$]
For $p\geq 2$, a function $\Phi(\mu,\bm y)$ is  $C^{2,2}(\PP_p(\R^d)\times\R^d)$ if for any $\mu\in\mathcal{P}_p(\R^d)$ and $\bm y\in\R^d$, there exists a continuous mapping $(\mu,\bm y,\bm x_1)\to\frac{\delta \Phi}{\delta \mu}(\mu,\bm y,\bm x_1)$, as well as a continuous mapping $(\mu,\bm y,\bm x_1,\bm x_2)\to\frac{\delta^2 \Phi}{(\delta \mu)^2}(\mu,\bm y,\bm x_1,\bm x_2)$ that is symmetric in its two arguments $(\bm x_1,\bm x_2)$ and with the following properties:
\begin{itemize}
\item {(Continuous differentiability)} $\nabla_{\bm y} \Phi(\mu,\bm y)$, $\nabla_{\bm y}^2 \Phi(\mu,\bm y)$, $\frac{\delta \Phi}{\delta \mu}(\mu,\bm y,\bm x_1)$, $\nabla_{\bm x_1}\frac{\delta \Phi}{\delta\mu}(\mu,\bm y,\bm x_1)$, \\
$\nabla_{\bm y}\frac{\delta \Phi}{\delta\mu}(\mu,\bm y,\bm x_1)$ $\nabla^2_{\bm x_1}\frac{\delta \Phi}{\delta\mu}(\mu,\bm y,\bm x_1)$, $\nabla_{\bm x_1}\nabla_{\bm y}\frac{\delta \Phi}{\delta\mu}(\mu,\bm y,\bm x_1)$, $\nabla_{\bm x_1}\nabla_{\bm x_2}\frac{\delta^2 \Phi}{(\delta \mu)^2}(\mu,\bm y,\bm x_1,\bm x_2)$ all exist and are continuous  for all $\bm y,\bm x_1,\bm x_2\in\R^d$, $\mu\in \PP_p(\R^d)$.
\item {(Uniform polynomial-growth)} there exists a constant $c>0$ such that for all $\bm x_1,\bm x_2,\bm y\in \R^d$, $\mu\in \PP(\R^d)$, 
\begin{equation*}
    \begin{aligned}
     \Big|\nabla_{\bm y}\Phi\Big|\leq c\big(1+|\bm y|^{p-1}\big),\quad\Big|\nabla^2_{\bm y}\Phi\Big|&\leq c\big(1+|\bm y|^{p-2}\big),
    \\
    \bigg|\nabla_{\bm x_1}\frac{\delta \Phi}{\delta\mu}\bigg|+\bigg|\nabla_{\bm y}\frac{\delta \Phi}{\delta\mu}\bigg|&\leq c\big(1+|\bm x_1|^{p-1}+|\bm y|^{p-1}\big),
    \\
    \bigg|\nabla^2_{\bm x_1}\frac{\delta \Phi}{\delta\mu}\bigg|+\bigg|\nabla_{\bm x_1\bm y}\frac{\delta \Phi}{\delta\mu}\bigg|&\leq c\big(1+|\bm x_1|^{p-2}+|\bm y|^{{p-2}}\big),
    \\
    \bigg|\nabla_{\bm x_1,\bm x_2}\frac{\delta^2 \Phi}{(\delta \mu)^2}\bigg|&\leq c\big(1+|\bm x_1|^{p-2}+|\bm x_2|^{p-2}+|\bm y|^{p-2}\big).
    \end{aligned}
\end{equation*}
\item {(Fundamental theorem of calculus)} for everything $\mu,\nu\in\PP_2(\R^d)$, 
\begin{equation*}
    \begin{aligned}
        \Phi(\mu,\bm y)-&\Phi(\nu,\bm y)=\int_0^1\int_{\R^d}\frac{\delta \Phi}{\delta \mu}\big(\lambda \mu+(1-\lambda)\nu,\bm y, \bm x_1\big)(\mu-\nu)(\d \bm x_1)\d \lambda,
        \\
        \Phi(\mu,\bm y)-&\Phi(\nu,\bm y)-\int_0^1\int_{\R^d}\frac{\delta \Phi}{\delta \mu}\big(\nu,\bm y,\bm x_1\big)(\mu-\nu)(\d \bm x_1)
        \\
        &=\int_0^1\int_0^t\int_{\R^d}\frac{\delta^2\Phi}{(\delta \mu)^2}\big(s \mu+(1-s)\nu,\bm y, \bm x_1,\bm x_2\big)(\mu-\nu)(\d \bm x_1)(\mu-\nu)(\d \bm x_2)\d s\d t.
    \end{aligned} 
\end{equation*}
\end{itemize} 
Here  $\frac{\delta \Phi}{\delta \mu}$ and $\frac{\delta^2 \Phi}{(\delta \mu)^2}$ are respectively  (a version of) the \textit{first order} and the \textit{second order} linear derivatives of $\Phi$.
\end{definition}

{   {\begin{remark}
Note that the linear derivative is in the sense of \cite{cox2024controlled}, which is properly defined up to an additive constant.
This form of linear derivative is shown to be appropriate for analyzing the flow of measures on semimartingales \cite{cox2024controlled,guo2023ito}. 
    
\end{remark}}
}

\subsection{Main result}
Now we are ready to state the main result.

\begin{theorem}[It\^o's formula for flows of conditional measure on semimartingales]\label{mainthm}
   Given a completed filtered probability space 
 $(\Omega,\F,\FF=(\F_t)_{t\in[0,T]},\P)$, which supports two $\R^d$-valued $\FF$-semimartingales $\bm X=(\bm X_t)_{t\in[0,T]}$ and $\bm Y=(\bm Y_t)_{t\in[0,T]}$ satisfying Assumption \ref{asp:semi} for some $p\geq 2$. Let $\mu_t=$Law$(X_t|\G_t)$, with  the subfiltration $\GG=(\G_t)_{t\in[0,T]}\subset \FF$ satisfying Assumption \ref{asp:filt}. Then, for $\Phi\in C^{2,2}(\mathcal{P}_p(\R^d)\times\R^d)$, we have
   \begin{equation}\label{mainito}
   \begin{aligned}
   &\Phi(\mu_t,\bm Y_t)-\Phi(\mu_0,\bm Y_0)
   \\
   =&\;\overline\E\bigg[\int_{0+}^t\bigg(\nabla_{\bm x_1}\frac{\delta \Phi}{\delta\mu}\big(\mu_{s-},\bm Y_{s-},\bm X'_{s-}\big)\cdot\d (\bm X')^c_s+\frac12\nabla^2_{\bm x_1\bm x_1}\frac{\delta \Phi}{\delta\mu}\big(\mu_{s-},\bm Y_{s-},\bm X'_{s-}\big):\d [\bm X',\bm X']^c_s
   \\
   &\qquad\quad+\frac12\nabla^2_{\bm x_1\bm x_2}\frac{\delta^2 \Phi}{(\delta\mu)^2}\big(\mu_{s-},\bm Y_{s-},\bm X'_{s-},\bm X''_{s-}\big):\d [\bm X',\bm X'']^c_s+\nabla^2_{\bm x_1\bm y}\frac{\delta \Phi}{\delta\mu}\big(\mu_{s-},\bm Y_{s-},\bm X'_{s-}\big):\d [\bm X',\bm Y]^c_s\bigg)
   \\
   &\quad\quad+\sum_{0<s\leq t}\bigg(\frac12\bigg(\frac{\delta^2 \Phi}{(\delta\mu)^2}\big(\mu_{s-},\bm Y_{s-},\bm X'_{s},\bm X''_{s}\big)-\frac{\delta^2 \Phi}{(\delta\mu)^2}\big(\mu_{s-},\bm Y_{s-},\bm X'_{s-},\bm X''_{s}\big)
    \\
    &\qquad\qquad\qquad-\frac{\delta^2 \Phi}{(\delta\mu)^2}\big(\mu_{s-},\bm Y_{s-},\bm X'_{s},\bm X''_{s-}\big)+\frac{\delta^2 \Phi}{(\delta\mu)^2}\big(\mu_{s-},\bm Y_{s-},\bm X'_{s-},\bm X''_{s-}\big)\bigg)
    \\
    &\qquad\qquad\qquad+\bigg(\nabla_{\bm y}\frac{\delta \Phi}{\delta\mu}\big(\mu_{s-},\bm Y_{s-},\bm X'_{s}\big)-\nabla_{\bm y}\frac{\delta \Phi}{\delta\mu}\big(\mu_{s-},\bm Y_{s-},\bm X'_{s-}\big)\bigg)\cdot\Delta \bm Y_s
    \\
    &\quad\qquad\qquad+\frac{\delta \Phi}{\delta\mu}\big(\mu_{s-},\bm Y_{s-},\bm X'_s\big)-\frac{\delta \Phi}{\delta\mu}\big(\mu_{s-},\bm Y_{s-},\bm X'_{s-}\big)\bigg)\mathbbm {1}_{\{\mu_s=\mu_{s-}\}}
   \bigg|\F\bigg]
   \\
   &+\int_{0+}^t\nabla_{\bm y}\Phi(\mu_{s-},\bm Y_{s-})\cdot \d (\bm Y)^c_s+\frac12\int_{0+}^t\nabla_{\bm y\bm y}^2 \Phi(\mu_{s-},\bm Y_{s-}):\d [\bm Y,\bm Y]^c_s
   \\
   &+\sum_{0<s\leq t}\Big(\Phi(\mu_s,\bm Y_s)-\Phi(\mu_{s-},\bm Y_{s-})\Big),
   \end{aligned}
   \end{equation}
   for all $t\in[0,T]$, where $\bm X',\bm X''$ are two conditionally independent copies of $\bm X$ given the sub $\sigma$-algebra $\G_T$ defined in some enlarged probability space $(\overline\Omega,\overline{\F},\overline{\P})$, and $\overline{\E}[\cdot|\F]$ is the conditional expectation given $\F$ in the enlarged probability space.
\end{theorem}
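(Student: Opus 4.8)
The plan is to reduce the general statement to a tractable special case by a three-layer argument: first establish the formula for cylindrical test functions via the conditional independent copies, then extend to general $\Phi \in C^{2,2}(\PP_p(\R^d)\times\R^d)$ by density, and finally remove any boundedness assumptions by a localization argument using Propositions \ref{prop:sleqh} and \ref{prop:emery}. The conceptual heart is the following identification: on the enlarged space $(\overline\Omega,\overline\F,\overline\P)$ supporting conditionally i.i.d.\ copies $\bm X', \bm X''$ of $\bm X$ given $\G_T$, for a cylindrical $\Phi(\mu,\bm y)$ built from integration of smooth kernels against $\mu$, one can write $\Phi(\mu_t,\bm Y_t)$ as $\overline\E[\,F(\bm X'_t, \bm X''_t, \bm Y_t)\mid \F\,]$ for a finite-dimensional smooth $F$. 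Then the \emph{classical} multidimensional It\^o formula for the semimartingale $(\bm X', \bm X'', \bm Y)$ on $\overline\Omega$ applies directly, and one pushes the conditional expectation $\overline\E[\,\cdot\mid \F\,]$ through the resulting expression. The key technical input here is Theorem \ref{thm:condprocess}: it is what lets us replace $\overline\E[\int \eta\,\d(\bm X')^c \mid \F]$-type terms and the bracket terms $\overline\E[\,\d[\bm X',\bm X'']^c \mid \F]$ by the corresponding objects expressed through $\d X^\G$ and its quadratic variation — in other words, it guarantees that the conditionally-averaged classical It\^o formula really coincides with the claimed measure-flow formula, which would otherwise fail (cf. the examples in Section \ref{section:condcopy}).

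Concretely, for the cylindrical step I would fix a representation $\frac{\delta\Phi}{\delta\mu}(\mu,\bm y,\bm x_1) = \sum_k \phi_k(\bm y,\bm x_1)\langle \mu, \psi_k\rangle^{\cdots}$ (polynomial-in-$\mu$ monomials with smooth, compactly controlled coefficients), apply the classical It\^o formula on $\overline\Omega$ to the map $t \mapsto \prod \langle \mu_t,\psi_k\rangle = \prod \overline\E[\psi_k(\bm X'_t)\mid \F]$, carefully tracking: (i) the continuous martingale part, which produces the $\nabla_{\bm x_1}\frac{\delta\Phi}{\delta\mu}\cdot\d(\bm X')^c$ term; (ii) the continuous finite-variation/bracket part, producing the $\nabla^2_{\bm x_1\bm x_1}$, $\nabla^2_{\bm x_1\bm x_2}$ (cross-copy, from $[\bm X',\bm X'']^c$) and $\nabla^2_{\bm x_1\bm y}$ terms; and (iii) the jump part, where one must be meticulous: a jump of $\bm X$ at time $s$ is a jump of $\mu$ unless it is a "common-noise" jump, hence the indicator $\mathbbm 1_{\{\mu_s=\mu_{s-}\}}$ splitting the sum, with the remaining jump mass folded into $\Phi(\mu_s,\bm Y_s)-\Phi(\mu_{s-},\bm Y_{s-})$. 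The purely-$\bm Y$ terms $\nabla_{\bm y}\Phi\cdot\d\bm Y^c$ and $\tfrac12\nabla^2_{\bm y\bm y}\Phi:\d[\bm Y,\bm Y]^c$ come out immediately since $\bm Y$ lives on the original space and is unaffected by the conditioning.

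For the density step, I would show (using the $C^{2,2}$-structure, the fundamental-theorem-of-calculus property, and the uniform polynomial-growth bounds) that cylindrical functions are dense in $C^{2,2}(\PP_p(\R^d)\times\R^d)$ in a topology strong enough that every term on the right-hand side of \eqref{mainito} is continuous; the Emery inequality (Proposition \ref{prop:emery}) together with $\H^p$-bounds from Assumption \ref{asp:semi} and \eqref{assumpt_XY2} controls the stochastic-integral and jump-sum terms, while Proposition \ref{prop:sleqh} controls sup-norms, so one passes to the limit along an approximating sequence $\Phi_n \to \Phi$. The localization step truncates the semimartingales by stopping times $\tau_m \uparrow T$ making $\bm X^{\tau_m}, \bm Y^{\tau_m}$ bounded, applies the formula there, and lets $m\to\infty$ using the dominated convergence theorem for stochastic integrals (\cite[Page 273, Lemma]{protter2005stochastic}).

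The main obstacle I anticipate is the jump analysis in the cylindrical step — specifically, correctly bookkeeping how a single jump $\Delta\bm X_s$ propagates through $\mu_t$ versus through the conditional expectation $\overline\E[\cdot\mid\F]$ of a jump of $(\bm X',\bm X'')$, and justifying the dichotomy encoded by $\mathbbm 1_{\{\mu_s=\mu_{s-}\}}$: a jump is "seen by $\mu$" precisely when it is not $\G_T$-measurable in an appropriate sense, and the second-difference (discrete mixed-derivative) structure of the jump term for $\frac{\delta^2\Phi}{(\delta\mu)^2}$ must be matched against the two-copy jump expansion. A secondary delicate point is interchanging $\overline\E[\cdot\mid\F]$ with the stochastic integrals, which is exactly where Theorem \ref{thm:condprocess} does the work, but one still has to verify the integrability hypotheses of that theorem hold under Assumption \ref{asp:semi} for all the integrands appearing, using the polynomial-growth bounds on the derivatives of $\Phi$.
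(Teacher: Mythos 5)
Your overall skeleton (cylindrical functions, then density, then localization, with Theorem \ref{thm:condprocess} as the device for interchanging $\overline\E[\,\cdot\,|\F]$ with stochastic integrals) matches the paper's. But the step you call the ``conceptual heart'' contains a genuine error: for a cylindrical function $\Phi(\mu,\bm y)=f\big(\langle\mu,g^{(1)}\rangle,\dots,\langle\mu,g^{(n)}\rangle,\bm y\big)$ with a general $f\in C^2$, the identity $\Phi(\mu_t,\bm Y_t)=\overline\E\big[F(\bm X'_t,\bm X''_t,\bm Y_t)\,\big|\,\F\big]$ is false, since $f\big(\overline\E[g(\bm X'_t)|\F]\big)\neq\overline\E\big[f(g(\bm X'_t))|\F\big]$ unless $f$ is affine (or a polynomial of degree at most the number of available conditionally independent copies). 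Consequently you cannot apply the classical It\^o formula to $(\bm X',\bm X'',\bm Y)$ and then ``push $\overline\E[\,\cdot\,|\F]$ through.'' The paper's route differs in exactly this respect: it first shows via \eqref{cpresult2} of Theorem \ref{thm:condprocess} that $\bm Z_t=\big(\langle\mu_t,g^{(i)}\rangle\big)_{i=1}^n$ is itself an $\overline\FF$-semimartingale, applies the finite-dimensional It\^o formula to $f(\bm Z_t,\bm Y_t)$ --- so that $f$ and its derivatives are always evaluated at the $\F$-measurable process $(\bm Z,\bm Y)$ and never enter the conditional expectation --- and only then uses \eqref{cpresult3} and \eqref{cpresult4} to convert the differentials $\d\bm Z_s$ and $\d[\bm Z,\bm Z]_s$, $\d[\bm Z,\bm Y]_s$ into conditional expectations of integrals against the copies $\bm X',\bm X''$; the $\F$-measurable coefficients $\partial_{z_i}f(\bm Z_{s-},\bm Y_{s-})$ are then pulled inside $\overline\E[\,\cdot\,|\F]$, which is how \eqref{mainito} arises.

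Your fallback of restricting to monomials $\prod_k\langle\mu,\psi_k\rangle$ does make the representation valid (with as many copies as factors), but it changes the test class: you would then need density of \emph{polynomial} cylinder functionals in $C^{2,2}(\PP_p(\R^d)\times\R^d)$, with convergence of all derivatives and the uniform growth bounds \eqref{cylin_boun} --- a statement that is not Proposition \ref{dense_cylindrical} and that you do not supply. Two smaller points. First, the indicator $\mathbbm 1_{\{\mu_s=\mu_{s-}\}}$ selects the jump times at which $\mu$ does \emph{not} move (the others are absorbed into $\sum_{0<s\leq t}\big(\Phi(\mu_s,\bm Y_s)-\Phi(\mu_{s-},\bm Y_{s-})\big)$), so your heuristic ``a jump is seen by $\mu$ precisely when it is not $\G_T$-measurable'' is inverted; in the paper the dichotomy is not obtained by classifying jumps of $\bm X$ but falls out algebraically from cancelling the jump compensator $J_3$ of the finite-dimensional It\^o formula against the jump parts of $J_1$ and $J_2$. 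Second, the paper's localization only truncates $\bm Y$ (via $\tau_k=\inf\{t:|\bm Y_t|>k\}$), since the $g^{(i)}$ are already bounded; truncating $\bm X$ as well is unnecessary but harmless.
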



{  
\begin{remark}

To ensure that the enlarged space $(\overline\Omega,\overline{\F},\overline{\FF},\overline{\P})$ is properly defined such that $\mu_t,\bm X_t,\bm Y_t$ and $\F$ are naturally extended to this enlarged space and $\bm X'$,$\bm X''$ are well-defined, one needs the precise notion of conditionally independent copies of stochastic processes.
In addition, one needs to show the existence of these conditionally  independent copies
in Theorem \ref{mainthm}.
 These will be studied  in the next section and specified in Theorem \ref{thm:condindepcopy} and Corollary \ref{cor:condindepcopy}.

\end{remark}

\begin{remark}
    Theorem \ref{mainthm} includes the following It\^o's formulas as special cases:
\begin{itemize}
    \item The It\^o's formula for mean-field games and control problems with common noise  developed in \cite{buckdahn2017mean,carmona2018probabilistic}: take $X_t$ as an It\^o's process driven by both idiosyncratic noise $W$ and common noise $W^0$, let the common filtration be the filtration generated by $W_0$.
    \item The It\^o's formula for flows of measures associated with possibly discontinuous processes without common noise studied in \cite{guo2023ito,talbi2023dynamic}:  set $X_t$ as a general semimartingale and choose $\G$ as the trivial $\sigma$-algebra.
    \item The It\^o's formula for partially observed diffusions as in \cite{bandini2019randomized}:  let $X_t$ be an It\^o's process and letting $\G$ be the filtration generated by the observation process. For more details, see  Section~\ref{sec:appl} for an extension of this problem to jump-diffusion processes.
\end{itemize}
\end{remark}
}


\section{Conditionally Independent Copy of Stochastic Process}\label{section:condcopy}

\subsection{Why conditionally independent copy?}


One of the key components in  Theorem \ref{mainthm} is  the notion  of   conditionally independent copies of stochastic processes.  
We now illustrate this concept through several examples   in the context of flow of conditional laws for semimartingales and its role in the derivation of It\^o's formula. 



Let us first consider a real-valued stochastic process  $X_t$ of the form
\begin{equation*}\label{counterexample}
    \d X_t=b_t\d t+\sigma_t\d W_t+\sigma^0_t\d W^0_t,
\end{equation*}
where $W,W^0$ are two independent real valued Brownian motions, $\GG=\{\G_t\}_{t\in[0,T]}$ is the  filtration generated by $W^0$, $\G=\G_T=\sigma(W^0_u, u\le T)$, and $\sigma_t$ and  $\sigma_t^0$ are appropriate functions to ensure the well-definedness of the above SDE. 
By the classical It\^o's formula, the dynamic of $g(X_t)$ for a smooth function $g$ is given by
\begin{equation*}
    \d g(X_t)=b_tg'(X_t)+\frac12(\sigma_t^2+(\sigma^0_t)^2)g''(X_t)\d  t+\sigma_tg'(X_t)\d W_t+\sigma^0_tg'(X_t)\d W^0_t,
\end{equation*}
and consequently the dynamic of $Z_t=\E[g(X_t)|\G]$ is given by
\begin{equation}\label{equ:dyy}
    \d Z_t=\E\bigg[b_tg'(X_t)+\frac12(\sigma_t^2+(\sigma^0_t)^2)g''(X_t)\Big|\G\bigg]\d  t+\E\big[\sigma^0_tg'(X_t)|\G\big]\d W^0_t.
\end{equation}
The above equation follows essentially from the fact that for progressively measurable process $(\eta_t)_{t\in [0,T]},$
\begin{equation}\label{equ:simp}
    \E\bigg[\int_0^t\eta_s\d W_s\Big|\G\bigg]=0,\qquad\E\bigg[\int_0^t\eta_s\d W^0_s\Big|\G\bigg]=\int_0^t\E[\eta_s|\G]\d W^0_s,\text{ for all }t\in[0,T]. 
\end{equation}
(See \cite[Lemma B.1]{lacker2022superposition} for more details). 

From \eqref{equ:dyy} one can calculate the quadratic variation of $Z_t$:
\begin{equation*}
    \d \langle Z\rangle_t=\E\big[\sigma^0_tg'(X_t)|\G\big]^2\d t=\E[\sigma^0_tg'(X_t)\widetilde\sigma^0_tg'(\widetilde X_t)|\G]\d t,
\end{equation*}
where $\widetilde\sigma_t^0,\widetilde X_t$ are respectively conditionally independent copies of $\sigma^0_t$ and $X_t$ given $\G$, as formally introduced in \cite{carmona2018probabilistic}. 
Moreover, one can derive for a smooth function $f$ the It\^o's formula for $f(Z_t)$, which depends on the precise expression of 
\begin{equation*}
    \begin{aligned}
    \int_0^tf'(Z_s)\d Z_s =&\int_0^tf'(Z_s) \bigg(\E\Big[b_sg'(X_s)+\frac12(\sigma_s^2+(\sigma^0_s)^2)g''(X_s)\Big|\G\Big]\d  s+\E\big[\sigma^0_tg'(X_s)|\G\big]\d W^0_s\bigg)
    \\
    =\int_0^t\bigg(\E&\Big[f'(Z_s) \Big(b_sg'(X_s)+\frac12(\sigma_s^2+(\sigma^0_s)^2)g''(X_s)\Big)\Big|\G\Big]\d  s+\E\big[f'(Z_s) \sigma^0_tg'(X_s)|\G\big]\d W^0_s\bigg)
    \end{aligned}
\end{equation*}
and 
\begin{equation*}
    \begin{aligned}
    \int_0^tf''(Z_s)\d \langle Z\rangle_s&= \int_0^tf''(Z_s)\cdot\frac12\E\big[\sigma^0_sg'(X_s)\widetilde\sigma^0_sg'(\widetilde X_s)\big|\G_T\big]\d s
    \\
    &=\frac12\int_0^t\E\big[f''(Z_s)\sigma^0_sg'(X_s)\widetilde\sigma^0_sg'(\widetilde X_s)\big|\G_T\big]\d s.
    \end{aligned}
\end{equation*}
However, there is an  issue when one considers  a general continuous semimartingale $X$ and a general sub $\sigma$-algebra $\G$, where the Itô's formula takes the form:
\begin{equation}\label{equ:generx}
    \d g(X_t)=g'(X_t)\d X_t+\frac12 g''(X_t)\d \langle X\rangle _t.
\end{equation}
Indeed, in order  to characterize the dynamics of $Z_t=\E[g(X_t)|\G]$, one needs an appropriate  expression for $\d Z_t$, $\d\langle Z\rangle_t$,
and more importantly for
\begin{equation}\label{equ:fy}
 \int_0^tf'(Z_s)\d Z_s \text{ and }\int_0^tf''(Z_s)\d \langle Z\rangle_s.
\end{equation}
One naive guess of the extension of  \eqref{equ:simp} to this general case is 
\begin{equation}\label{equ:wrong}
     \E\bigg[\int_0^t\eta_s\d X_s\Big|\G\bigg]=\int_0^t\E[\eta_s|\G] \d \E[X_s|\G].
\end{equation}
Unfortunately this guess does not hold in general. For instance, take  $X_t$ from \eqref{counterexample} with $b_t=0$. By \eqref{equ:simp}, the left hand side of \eqref{equ:wrong} becomes
\begin{equation*}
    \E\bigg[\int_0^t\eta_s\d X_s\Big|\G\bigg]=\E\bigg[\int_0^t\eta_s\big(\sigma_s\d W_s+\sigma^0_s\d W^0_s\big)\Big|\G\bigg]=\int_0^t\E\big[\eta_s\sigma^0_s\big|\G\big]\d W^0_s,
\end{equation*}
while the right hand side is
\begin{equation*}
    \int_0^t\E[\eta_s|\G] \d X^\G_s=\int_0^t\E[\eta_s|\G]\E[\sigma^0_s|\G]\d W^0_s,
\end{equation*}
and \eqref{equ:wrong} holds if and only if $\eta_s$ and $\sigma_s^0$ are conditionally uncorrelated, given $\G$.

This is precisely why one needs to introduce the (more general) concept of conditionally independent copies of stochastic processes. 
Intuitively,  if $X^{(1)}$, $X^{(2)}$ were two conditionally independent copies of $X$ given $\G$ satisfying $X^{(1)}\indep X^{(2)}|\G$ and Law$(X^{(1)}|\G)=$ Law$(X^{(2)}|\G)=$ Law$(X|\G)$ defining on some probability space that depends on $\G$, then we can get through  (\ref{equ:wrong}) such that 
\begin{equation*}
    \begin{aligned}
    \int_0^tf'(Z_s)\d Z_s&=\int_0^tf'(Z_s)\d \E\big[g(X_s)|\G\big]=\int_0^tf'(Z_s)\d \widetilde\E\big[g(X^{(1)}_s)\big]=\widetilde\E\bigg[\int_0^tf'(Z_s)\d g(X^{(1)}_s)\bigg]
    \\
    &=\widetilde\E\bigg[\int_0^tf'(Z_s)\Big(g'(X^{(1)}_s)\d X^{(1)}_s+\frac12g''(X^{(1)}_s)\d\langle X^{(1)}\rangle_s\Big)\bigg],
    \end{aligned}
\end{equation*}
and 
\begin{equation*}
    \begin{aligned}
    \int_0^tf''(Z_s)\d \langle Z\rangle_s&=\int_0^tf'(Z_s)\d \langle\E\big[g(X_\cdot)|\G\big]\rangle_s=\widetilde\E\bigg[\int_0^tf'(Z_s)\d \langle g(X^{(1)}_\cdot),g(X^{(2)}_\cdot)\rangle_s\bigg]
    \\
    &=\widetilde\E\bigg[\int_0^tf''(Z_s)g'(X^{(1)}_s)g'(X^{(2)}_s)\d\langle X^{(1)},X^{(2)}\rangle_s\bigg],
    \end{aligned}
\end{equation*} 
where $\widetilde\E$ denotes the expectation on $X^{(1)}$ and $X^{(2)}$ while keeping the information that is $\G$-measurable. With these expressions, one can analyze  \eqref{equ:fy} for a general semimartingale $X$, as will be  detailed  in \eqref{cpresult3} and \eqref{cpresult4} of Theorem \ref{thm:condprocess}.

Next, we will provide the construction of conditionally independent copies for the general case of a  semimartingale $\bm X$ and an {\it arbitrary}  sub $\sigma$-algebra $\G$ which is not  necessarily associated with any random variable. As emphasized earlier, this construction is necessary for establishing It\^o's formula with the flow of conditional measures on semimartigales.

\subsection{Construction of conditionally independent copies of   stochastic process}

Given a probability space $(\Omega,\F,\P)$, consider the extended probability space $\overline\Omega=\Omega^{n+1}$  defined by
\begin{equation}\label{def:probofexd}
\overline\Omega=\Omega^{n+1}=\{(\omega_0,\omega_1,\cdots,\omega_n)|\omega_0,\omega_1,\cdots,\omega_n\in\Omega\},
\end{equation}
and the extended $\sigma$-algebra $\overline\F$ by
\begin{equation}\label{def:probsigmaalexd}
\overline\F=\sigma\{A_0\times A_1\times \cdots\times A_n:A_0,A_1,\cdots,A_n\in\F\}.
\end{equation}
To  ensure that these copies are conditionally independent, let us define the probability measure $\overline \P$ as follows: for $A_0,A_1,\cdots,A_n\in\F$,
\begin{equation}\label{def:probpexd}
    \overline \P\big(A_0\times A_1\times \cdots\times A_n\big):=\E\bigg[\mathbbm{1}_{A_0}\prod_{i=1}^n\P(A_i|\G)\bigg].
\end{equation}
This measure is well  defined since the set $\{A_0\times A_1\times \cdots\times A_n\}$ is a $\pi$-system. Finally, we define the completion of the above extended probability space and still use $(\overline\Omega,\overline \F,\overline \P)$ to denote the completed probability space. Further,  define $\widetilde\F,\widetilde\G$ for the extension of $\F$ and $\G$ into this enlarged probability space as
\begin{equation}\label{def:prodfgexd}
    \widetilde\F:=\text{the completion of }\{A\times\Omega^n:A\in\F\}, \quad \widetilde\G:=\text{the completion of }\{A\times\Omega^n:A\in\G\}.
\end{equation}

Then we have:

\begin{theorem}\label{thm:condindepcopy}
    Given a probability space $(\Omega,\F,\P)$  and  a sub $\sigma$-algebra $\G\subset\F$, there exists an enlarged completed probability space $(\overline\Omega,\overline\F,\overline\P)$ satisfying \eqref{def:probofexd}, \eqref{def:probsigmaalexd} and \eqref{def:probpexd}; and for any random variable $R$ { taking values in a standard Borel space $(E,\mathcal{E})$}, there exist an $\widetilde R$ and $n$ conditionally independent copies $\{R^{(i)}\}_{i=1,2,..n}$ given $\widetilde\G$ such that 
    \begin{equation}\label{def:rcopy}
        \widetilde R(\omega_0,\omega_1,\cdots,\omega_n)=R(\omega_0),\;R^{(i)}(\omega_0,\omega_1,\cdots,\omega_n)=R(\omega_i),
    \end{equation}
    with  $\widetilde R$ and $\{R^{(i)}\}_{i=1,2,..n}$ satisfying the following:
    \begin{itemize}
        \item Given $\widetilde\G$, $\widetilde R$ and  $\big\{R^{(i)}\big\}_{i=1,2,..n}$ are conditionally independent. 
        \item Given $\widetilde\F$, $\widetilde R$ and  $\big\{R^{(i)}\big\}_{i=1,2,..n}$ are conditionally independent. 
        \item Law$\big(R\big|\G\big)(\omega_0)=$Law$\big(\widetilde R\big|\widetilde\G\big)(\overline\omega)=$Law$\big(R^{(i)}\big|{\widetilde\G}\big)(\overline\omega)=$Law$\big(R^{(i)}\big|{\widetilde\F}\big)(\overline\omega)$, for all $i=1,2,..,n$, a.s.,
    \end{itemize}
    for $\overline\omega=(\omega_0,,\omega_1,\cdots,\omega_n)\in\overline\Omega$.
\end{theorem}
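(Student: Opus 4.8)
The plan is to verify the assertion in three stages: (i) the enlarged probability space is well-defined and unique; (ii) the copies $\widetilde R$, $\{R^{(i)}\}$ defined coordinatewise have the claimed conditional-independence properties; (iii) the conditional laws coincide. First I would check that the set function $\overline\P$ in \eqref{def:probpexd} extends to a genuine probability measure on $\overline\F$: the collection of rectangles $A_0\times\cdots\times A_n$ is a $\pi$-system generating $\overline\F$, so by Carathéodory/Dynkin it suffices to show $\overline\P$ is countably additive on the algebra of finite disjoint unions of rectangles, and $\overline\P(\Omega^{n+1}) = \E[\mathbbm 1_\Omega \prod_{i=1}^n \P(\Omega|\G)] = 1$. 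Countable additivity follows from monotone convergence applied inside the expectation, using that each $A\mapsto \P(A|\G)$ is (a version of) a measure a.s.; uniqueness on the generated $\sigma$-algebra is then immediate from the $\pi$-$\lambda$ theorem since two such extensions agree on the generating $\pi$-system.

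Next I would establish the conditional-independence statements. Writing $\pi_i:\overline\Omega\to\Omega$ for the $i$-th coordinate projection, note $\widetilde R = R\circ\pi_0$ and $R^{(i)} = R\circ\pi_i$, and $\widetilde\F = \pi_0^{-1}(\F)$, $\widetilde\G = \pi_0^{-1}(\G)$. The key computation is that for bounded measurable $f_0,\dots,f_n$,
\begin{equation*}
\overline\E\Big[\prod_{i=0}^n f_i(\pi_i)\,\Big|\,\widetilde\G\Big] = f_0(\pi_0)\prod_{i=1}^n \E[f_i(R)\mid\G]\circ\pi_0,
\end{equation*}
which one reads directly off the definition \eqref{def:probpexd} by a standard approximation from indicators of rectangles (monotone class). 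This single identity yields both that $\widetilde R,R^{(1)},\dots,R^{(n)}$ are $\widetilde\G$-conditionally independent (take $f_0\equiv 1$ and factor) and, since the $\widetilde\F$-conditional expectation of $\prod_{i\ge 1}f_i(\pi_i)$ equals $\prod_{i\ge 1}\E[f_i(R)\mid\G]\circ\pi_0$ which is $\widetilde\F$-measurable, that $R^{(1)},\dots,R^{(n)}$ are $\widetilde\F$-conditionally independent and each $R^{(i)}$ is $\widetilde\F$-conditionally independent of $\widetilde R$ — note $\widetilde R$ is itself $\widetilde\F$-measurable, so the relevant content is the conditional independence of the $R^{(i)}$ from $\widetilde\F$. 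The conditional-law identities in the third bullet follow by taking $f_i$ of the form $\mathbbm 1_{\{R\in\cdot\}}$: $\mathrm{Law}(R^{(i)}\mid\widetilde\G)(\overline\omega) = \P(R\in\cdot\mid\G)(\pi_0(\overline\omega)) = \mathrm{Law}(R\mid\G)(\omega_0)$, and the $\widetilde\F$-conditional law of $R^{(i)}$ is the same expression (being $\widetilde\F$-measurable and agreeing against $\widetilde\G$-test functions, hence against $\widetilde\F$-test functions by the tower property and compatibility of $\widetilde\G\subset\widetilde\F$), while $\mathrm{Law}(\widetilde R\mid\widetilde\G)(\overline\omega) = \P(R\in\cdot\mid\G)(\omega_0)$ trivially since $\widetilde R = R\circ\pi_0$ and $\widetilde\G = \pi_0^{-1}(\G)$.

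The main obstacle I anticipate is not any single step but the bookkeeping around \emph{versions}: $\P(\cdot\mid\G)$ is only defined up to null sets, so to make \eqref{def:probpexd} unambiguous one must fix a regular conditional probability kernel $\kappa(\omega_0,A) = \P(A\mid\G)(\omega_0)$ — which exists since $\Omega$ may be taken standard Borel, or more robustly one argues the resulting $\overline\P$ does not depend on the chosen versions because any two differ only on a $\P$-null set of $\omega_0$, which is $\overline\P$-null after pulling back by $\pi_0$. With the kernel fixed, the products $\prod_{i=1}^n\kappa(\omega_0,A_i)$ are genuinely well-defined and all the manipulations above are rigorous; I would state the existence of the regular conditional kernel as the one external input and then proceed as outlined.
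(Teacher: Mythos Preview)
Your approach is essentially the same as the paper's: both verify the conditional-law and conditional-independence claims by testing against sets in $\widetilde\G$ (resp.\ $\widetilde\F$) and unwinding the definition \eqref{def:probpexd} on rectangles, then extending by $\pi$-$\lambda$/monotone class. Your explicit discussion of the regular-conditional-probability kernel and of version issues is in fact more careful than the paper, which simply asserts that $\overline\P$ is well-defined because rectangles form a $\pi$-system.

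That said, your ``master identity'' is misstated and, as written, would derail the argument. The right-hand side $f_0(\pi_0)\prod_{i\ge 1}\E[f_i(R)\mid\G]\circ\pi_0$ is not $\widetilde\G$-measurable unless $f_0$ happens to be $\G$-measurable, so it cannot be a conditional expectation given $\widetilde\G$. The correct identity (with $f_i:\Omega\to\R$ bounded measurable, and dropping the spurious ``$(R)$'' in the conditioned terms) is
\[
\overline\E\Big[\prod_{i=0}^n f_i\circ\pi_i \,\Big|\,\widetilde\G\Big]
=\Big(\E[f_0\mid\G]\circ\pi_0\Big)\prod_{i=1}^n \Big(\E[f_i\mid\G]\circ\pi_0\Big),
\]
which one checks exactly as the paper does in Proposition~\ref{prop:condcopy}\eqref{conditionindep} by integrating against $\mathbbm 1_{\widetilde G}$ with $\widetilde G=G\times\Omega^n$, $G\in\G$. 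With this corrected form, conditional independence of $\widetilde R,R^{(1)},\dots,R^{(n)}$ given $\widetilde\G$ follows directly (you do \emph{not} want to set $f_0\equiv 1$, since that discards the $\widetilde R$-factor you are trying to include); the $\widetilde\F$-conditioning statement then goes through as you describe, since $\widetilde R$ is $\widetilde\F$-measurable and the paper likewise only needs to show $\F',\F''$ are conditionally independent given $\widetilde\F$. Once the display is fixed, the remainder of your outline is correct and matches the paper's route.
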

Note that when $\G$ is a trivial $\sigma$-algebra $\{\emptyset,\Omega\}$, then the above theorem reduces to the standard construction of independent copies of stochastic processes.


\subsection{Construction of independent copies of semimartingale}
In the context of semimartingales, we can similarly define a corresponding version of the conditionally independent copy as follows. Take the completed filtered probability space $(\Omega,\F,\FF=(\F_t)_{t\in[0,T]},\P)$ with a sub-filtration $\GG=(\G_t)_{t\in[0,T]}\subset \FF$ satisfying $\F_t\indep\G_T|\G_t$,  define a complete extended probability space $(\overline\Omega,\overline\F,\overline\P)$ satisfying \eqref{def:probofexd}, \eqref{def:probsigmaalexd} and \eqref{def:probpexd} by setting the sub $\sigma$-algebra $\G=\G_T$.
 Define $\overline\FF:=(\overline\F_t)_{t\in(0,T)}$ by
 \begin{equation}\label{def:probofilexd}
\overline\F_t:=\text{completion of }\sigma\{A_0\times A_1\times \cdots\times A_n:A_0,A_1,\cdots,A_n\in\F_t\},
\end{equation}
and  define $\widetilde\FF:=(\widetilde\F_t)_{t\in(0,T)}$ and $\widetilde\GG:=(\widetilde\G_t)_{t\in(0,T)}$ respectively for the extension of $\FF$ and $\GG$ in this enlarged probability space as
\begin{equation}\label{def:probofilfgexd}
    \widetilde\F_t:=\text{completion of }\{A\times\Omega^n:A\in\F_t\}, \quad \widetilde\G_t:=\text{completion of }\{A\times\Omega^n:A\in\G_t\}.
\end{equation}

With this construction, we have
\begin{corollary}\label{cor:condindepcopy}
Given two $\R^d$-valued $\FF$-semimartingales $\bm X=(\bm X_t)_{t\in[0,T]}$ and $\bm Y=(\bm Y_t)_{t\in[0,T]}$  on a filtered probability space $(\Omega,\F,\FF=(\F_t)_{t\in[0,T]},\P)$, and a sub $\sigma$-algebra $\G\subset \F$, there exists an enlarged probability space $(\overline\Omega,\overline\F,\overline\FF=(\overline\F_t)_{t\in[0,T]},\overline\P)$ satisfying \eqref{def:probofexd}, \eqref{def:probsigmaalexd}, \eqref{def:probpexd} and \eqref{def:probofilexd} and natural extension $\widetilde \F$ and $\widetilde \G$ satisfying \eqref{def:prodfgexd}. Moreover, one can define 
two conditionally independent copies $(\bm X',\bm Y')$, $(\bm X'',\bm Y'')$ of the random processes $(\bm X,\bm Y)$ given $\G$ and natural extension  $(\widetilde{\bm X},\widetilde{\bm Y})$  by
\begin{equation*}
    (\bm X',\bm Y')(\omega_0,\omega_1,\cdots,\omega_n)=(\bm X,\bm Y)(\omega_1),\;(\bm X'',\bm Y'')(\omega_0,\omega_1,\cdots,\omega_n)=(\bm X,\bm Y)(\omega_2),
\end{equation*}
 $$
(\widetilde{\bm X},\widetilde{\bm Y})(\omega_0,\omega_1,\cdots,\omega_n)=(\bm X,\bm Y)(\omega_0),
$$
with $(\widetilde{\bm X},\widetilde{\bm Y})$, $(\bm X',\bm Y')$, and $(\bm X'',\bm Y'')$ satisfying 
\begin{equation}\label{condcopy}
\begin{cases}
    \text{Law}\big(\widetilde{\bm X},\widetilde{\bm Y}\big|\widetilde\G\big)=\text{Law}\big(\bm X',\bm Y'\big|\widetilde\G\big)=\text{Law}\big(\bm X',\bm Y'\big|\widetilde\F\big)
    \\
    \qquad=\text{Law}\big(\bm X'',\bm Y''\big|\widetilde\G\big)=\text{Law}\big(\bm X'',\bm Y''\big|\widetilde\F\big)\text{ a.s.,}
    \\
    (\widetilde{\bm X},\widetilde{\bm Y}), (\bm X',\bm Y'), (\bm X'',\bm Y'')\text{ are conditionally independent, given }\G \ or \ \F.

\end{cases}
\end{equation}
Furthermore, if the sub $\sigma$-algebra $\G=\G_T$, and $\F_t\indep\G_T\big|\G_t,$ for all $0\leq t\leq T$,  then the sub-filtration $\widetilde\G\subset\widetilde\F$ satisfies $\widetilde\F_t\indep\widetilde\G_T\big|\widetilde\G_t,$ for all $0\leq t\leq T$.
\end{corollary}

Note that  Theorem \ref{thm:condindepcopy} allows to use the notation $(\bm X,\bm Y),\F$, and $\G$ to respectively represent the natural extensions $(\widetilde {\bm X},\widetilde{\bm Y}),\widetilde \F$, and $\widetilde \G$ without any ambiguity, as  stated in the above corollary. Furthermore,  Corollary \ref{cor:condindepcopy} enables simplifying the notation $\widetilde\FF$ and $\widetilde\GG$ respectively by $\FF$ and $\GG$ without any ambiguity. Moreover, if we define the conditional law  in the enlarged probability space as $\widetilde \mu_t:=\text{Law}\big(\widetilde{\bm X}\big|\widetilde\G_t\big)$, then by the compatibility condition and Theorem \ref{thm:condprocess},
    \begin{equation*}
        \widetilde\mu_t(\omega_0,\omega_1,\cdots,\omega_n)=\text{Law}(\widetilde{\bm X_t}\big|\widetilde\G_T)(\omega_0,\omega_1,\cdots,\omega_n)=\text{Law}(\bm X_t\big|\widetilde\G_T)(\omega_0)=\mu_t(\omega_0),
    \end{equation*} 
which in turn allows for  using  $\mu_t$ for $\widetilde \mu_t$ without any ambiguity. Therefore, from now on we sometimes will simply use the original notation to denote the natural extensions.

\subsection{Examples}
To get some intuition for  this notion of  conditionally independent copies of stochastic processes,  let us see some special examples of $\F$ and the sub $\sigma$-algebra $\G$.
\vspace{5pt}
 
{ \noindent\textbf{Example 1.} Let $A,B$ be two independent random variables with $\F=\sigma(A,B)$ and $\G=\sigma(B)$. One can enlarge the probability space $(\overline\Omega,\overline \F,\overline\P)$ to include $\{(A^{(i)},B^{(i)})\}_{i=1,2,..,n}$ satisfying \eqref{def:probofexd}, \eqref{def:probsigmaalexd},\eqref{def:probpexd} and \eqref{def:rcopy}. Moreover, 
\begin{enumerate}
    \item $\{(A^{(i)},B^{(i)})\}_{i=1,2,..,n}$ have the same distribution as $(A,B)$.
    \item $B^{(i)}=B$ almost surely.
    \item $\overline \F=\sigma\big(A,B,\{A^{(i)},B^{(i)}\}_{i=1,2,..,n}\big)=\sigma\big(A,B,\{A^{(i)}\}_{i=1,2,..,n}\big)$.
    \item $\{A^{(i)}\}_{i=1,2,..,n}$ and $A,B$ are independent.
\end{enumerate}  
Consequently, for any $X=f(A,B)$, the conditionally independent copies of $X$ are $X^{(i)}=f(A^{(i)},B^{(i)})=f(A^{(i)},B), \ \ i=1, \cdots, n.$

\vspace{5pt}

\noindent\textbf{Example 2.} Let $X$ satisfy the following stochastic differential equation
\begin{equation*}   
    \d X_t=b(X_t)\d t+\sigma(X_t)\d W_t+\sigma^{(0)}(X_t)\d W^{(0)}_t,
\end{equation*}
where $W,W^{(0)}$ are two independent Brownian motions, and $b$, $\sigma$, and $\sigma^0$ satisfy appropriate continuity conditions. Let $\F_t=\sigma\big(\{W_s,W^{(0)}_s\}_{s\leq t}\big)$ and the sub-filtration $\G_t=\sigma\big(\{W^{(0)}_s\}_{s\leq t}\big)$, then one can enlarge the probability space to include independent Brownian motions $\big\{(X',W',{W^{(0)}}')\big\}_{i=1,2,..,N}$ satisfying \eqref{def:probofexd}, \eqref{def:probsigmaalexd},\eqref{def:probpexd}, \eqref{def:rcopy} and \eqref{def:probofilexd}. Moreover, 
\begin{enumerate}
    \item $X'$ satisfies the following stochastic differential equation
    \begin{equation*}   
        \d X'_t=b(X'_t)\d t+\sigma(X'_t)\d W'_t+\sigma^{(0)}(X'_t)\d (W^{(0)}_t)'.
    \end{equation*}
    \item $W^{(0)}=(W^{(0)})'$ almost surely.
    \item $\overline \F_t=\sigma\big(\{W_s,W^{(0)}_s,W'_s,(W^{(0}_s)'\}_{s\leq t}\big)=\sigma\big(\{W_s,W^{(0}_s,W'_s\}_{s\leq t}\big)$
    \item $W,W',W^{(0)}$ are independent Brownian motions.
\end{enumerate}
Consequently, $X'$ is one conditionally independent copy of $X$ given $\G$.

\vspace{5pt} }

There are more examples of conditionally independent copies of stochastic processes in the literature of mean-field games.  For instance,  when defining a semimartingale  by stochastic differential equations involving both idiosyncratic and common noise,  conditionally independent copies are  formally formulated in \cite{carmona2018probabilistic} as $\overline \Omega=\Omega_0\times\Omega_1\times\widetilde\Omega_1$, where the original probability space follows the structure $\Omega=\Omega_0\times\Omega_1$, and $\widetilde\Omega_1$ represents the independent copy of the probability space $\Omega_1$
 (\cite[Theorem 4.14, Volume II]{carmona2018probabilistic}).

\subsection{Properties of the conditionally independent copies of semimartingales}
 Next, we study  properties of these conditionally independent copies of semimartingales,  and explore the relation between the conditional expectation of  their stochastic integrals and   stochastic integrals  with respect to the conditional law of semimartingales. 
 For  notational simplicity, we will present the results for the one-dimensional case and their corresponding multi-dimensional cases can be obtained with appropriate adaptations.

 First, by the compatibility condition and the classical results regarding filtration enlargement, we have:
{ 

\begin{proposition}\label{prop:condhypo}
    Given an $\R$-valued $\FF$-semimartingales $\{X_t\}_{t\in[0,T]}$ on a filtered probability space $(\Omega,\F,\FF,\P)$. Let  $X'$ and  $X''$ be its conditionally independent copies defined in Proposition \ref{cor:condindepcopy}  in the enlarged filtered probability space $(\overline\Omega,\overline\F,\overline\FF=(\overline\F_t)_{t\in[0,T]},\overline\P)$ satisfying \eqref{def:probofexd}, \eqref{def:probpexd} and \eqref{def:probofilexd}. 
    Then, 
    \begin{enumerate}
        \item\label{cpresult0} $X$, $X'$, $X''$ are $\overline\FF$-semimartingales.
        \item Suppose $\E[|X_t|]<\infty$ for all $t\in[0,T]$, and define
        \begin{equation*}
            X^\G_t:=\overline\E\big[ X_t\big|\G_T\big]=\overline\E\big[X_t\big|\G_t\big].
        \end{equation*} 
        \begin{enumerate}
        \item\label{cpresult1} 
        If $\{X_t\}_{t\in[0,T]}$ is a finite variation process and 
        $\E\big[\big(\int_0^T|\d X_s|\big)^p\big]<\infty$
        for $p\geq1$, then $\{X^\G_t\}_{t\in[0,T]}$ is also a finite variation process, and
        \begin{equation*}
            \overline\E\bigg[\bigg(\int_0^T\big|\d X^\G_s\big|\bigg)^p\bigg]<\infty.
        \end{equation*}   
        \item\label{cpresult2} $\{X^\G_t\}_{t\in[0,T]}$ is a semimartingale with respect to $\overline\FF$, $\FF$, and $\GG$.
        \end{enumerate}
    \end{enumerate}
\end{proposition}

\begin{remark}
\label{rmk:H-hypothesis}
Proposition \ref{prop:condhypo} shows that semimartingales remain semimartingales under the enlarged filtration $\overline{\FF}$. This is closely related to the so called $H'$-hypothesis where any $\FF$-martingale is an $\overline\FF$-semimartingale with the enlargement filtration from 
$\FF$ to $\overline\FF$ (see for instance \cite{aksamit2017enlargement,grigorian2023enlargement}). Here we have obtained a slightly stronger result that any $\FF$-semimartingale $X$ and its conditionally  independent copy remain $\overline\FF$-semimartingales; meanwhile the $\GG$-semimartingale  $X^\G$ (the projection taken with respect to the $\sigma$-algebra $\G$ as in  \cite{bremaud1978changes}) remains $\overline\FF$-semimartingale. Despite the apparent similarity,  we are unable to locate exactly the same statement, and will provide the proof for completeness.  
\end{remark}

\begin{theorem}\label{thm:condprocess}
    Give two $\R$-valued $\FF$-semimartingales $\{X_t\}_{t\in[0,T]}$ and  $\{Y_t\}_{t\in[0,T]}$  on a filtered probability space $(\Omega,\F,\FF,\P)$. Let  $(X',Y')$ and  $(X'',Y'')$ be their conditionally independent copies defined in Proposition \ref{cor:condindepcopy}  in the enlarged filtered probability space $(\overline\Omega,\overline\F,\overline\FF=(\F_t)_{t\in[0,T]},\overline\P)$ satisfying \eqref{def:probofexd}, \eqref{def:probpexd} and \eqref{def:probofilexd}. 
    Suppose $\E[|X_t|]<\infty$ and $\E[|Y_t|]<\infty$ for all $t\in[0,T]$. 
    \begin{enumerate}
        \item\label{cpresult3} If $\{ Z_t\}_{t\in[0,T]}$ is an $\R$-valued $\FF$-adapted c\`adl\`ag process in the filtered probability space $(\Omega,\F,\FF,\P)$,  and $\|X\|_{\mathcal{H}_p}+\|Z\|_{\mathcal{H}_q}< \infty$ with $\frac1p+\frac1q=1$, then
        \begin{equation*}
            \int_0^t Z_{s-}\d  X^\G_s=\overline\E\bigg[\int_0^tZ_{s-}\d X'_s\bigg|\F\bigg].
        \end{equation*}
        \item\label{cpresult4} If $\{ Z_t\}_{t\in[0,T]}$ is an $\R$-valued $\FF$-adapted c\`adl\`ag  process in a filtered probability space $(\Omega,\F,\FF,\P)$ and  $\|X\|_{\mathcal{H}_p}+\|Y\|_{\mathcal{H}_q}+\|Z\|_{\mathcal{H}_r}< \infty$ with $\frac1p+\frac1q+\frac1r=1$, then
        \begin{equation*}
            \int_0^t Z_{s-}\d \big[X^\G,Y^\G\big]_s=\overline\E\bigg[\int_0^tZ_{s-}\d\big[X'',Y'\big]_s\bigg|\F\bigg],
        \end{equation*}
        \begin{equation*}
        \int_0^t Z_s\d\big[X^\G,Y\big]_s=\overline\E\bigg[\int_0^t Z_s\d\big[X',Y\big]_s\bigg|\F\bigg].
        \end{equation*}
    \end{enumerate}
\end{theorem}}



\begin{remark}\label{rmk:equivalence}
Statements \eqref{cpresult3} and \eqref{cpresult4} in Theorem \ref{thm:condprocess} are the key equations  for deriving  the It\^o's formula (Theorem \ref{mainthm}. They ensure crucially a) the equivalence between  the stochastic integral of the conditional expectation of a semimartingale and the conditional expectation of the It\^o's integral of its conditionally independent copy, and b)   the equivalence between  the quadratic variation of the conditional expectation of a semimartingale and the conditional expectation of the quadratic variation of its two conditionally independent copies. 
\end{remark}
The proof of Theorem \ref{thm:condprocess} is presented in Section \ref{sec:pfprocond}.

\section{Proof of Main Result (Theorem \ref{mainthm})}
Having established necessary properties of conditionally independent copies of semimartingales, we are now ready to prove Theorem \ref{mainthm}, the  It\^o's formula for flow of conditional laws on semimartingales, through several steps. First, we will derive the formula 
for cylindrical functions;
we will then show that cylindrical functions are dense in $C^{2,2}(\R^d\times\mathcal{P}_p(\R^d))$; finally,
with appropriate localization technique and  dominated convergence theorem, we will  finish the proof. 

\subsection{ Step 1: Ito's formula for cylindrical functions.}\label{sec:Itocyl}

First, let us recall the cylindrical functions (see for instance (\cite{cuchiero2019probability}).
\begin{definition}[$C^{2,2}$ cylindrical functions]\label{def:cylin}
A function $\Phi:\PP(\R^d)\times\R^d\to\R$ is a $C^{2,2}$ cylindrical function if 
\begin{equation*}
    \Phi(\mu,\bm y)=f\big(\langle \mu,g^{(1)}\rangle,..\langle \mu,g^{(n)}\rangle,\bm y\big),
\end{equation*}
where $f\in C^2(\R^{n+d})$ and $g=(g^{(1)},..,g^{(n)})$, with $g^{(i)}\in C^2_b(\R^d)$.
\end{definition}

We will first show that for smooth cylindrical functions $\Phi$, Theorem \ref{mainthm} holds.
Indeed, for semimartingales $\{\bm X_t\}_{t\in[0,T]}$ and $g^{(i)}\in C_b^2(\R^d)$ with $i=1,2,..,n$, by the classical It\^o's formula \cite[Theorem 33, Chapter II]{protter2005stochastic},
\begin{equation*}\label{equ:itog1}
    \begin{aligned}
    g^{(i)}(\bm X_t)-g^{(i)}(\bm X_0)
    =&\int_{0+}^t\nabla g^{(i)}(\bm X_{s-})\cdot\d \bm X^c_s+\frac12\int_{0+}^t\nabla^2 g^{(i)}(\bm X_{s-}):\d [\bm X,\bm X]^c_s
    \\
    &\;\;+\sum_{0<s\leq t}\Big\{g^{(i)}(\bm X_s)-g^{(i)}(\bm X_{s-})\Big\},\text{ for }t\in[0,T],
    \end{aligned}
\end{equation*}
and $\{g^{(i)}(\bm X_t)\}_{t\in[0,T]}$ for $i=1,..,n$ are semimartingales. Taking conditional expectation given $\G_T$, and recall that $\mu_t=\text{Law}(\bm X_t|\G_t)=\text{Law}(\bm X_t|\G_T)$, we have
\begin{equation}\label{equzk}
    \begin{aligned}
    \big\langle\mu_t,g^{(i)}\big\rangle-\big\langle\mu_0,g^{(i)}\big\rangle
    =&\;\E\bigg[\int_{0+}^t\nabla g^{(i)}(\bm X_{s-})\cdot\d \bm X^c_s+\frac12\int_{0+}^t\nabla^2 g^{(i)}(\bm X_{s-}):\d [\bm X,\bm X]^c_s
    \\
    &\qquad+\sum_{0<s\leq t}\Big\{g^{(i)}(\bm X_s)-g^{(i)}(\bm X_{s-})\Big\}\bigg|\G_T\bigg].
    \end{aligned}
\end{equation}
Define $\bm Z$ by 
\begin{equation*}
    \bm Z:=\Big\{\bm Z_t=\Big(\big\langle\mu_t,g^{(1)}\big\rangle,\big\langle\mu_t,g^{(2)}\big\rangle,..,\big\langle\mu_t,g^{(n)}\big\rangle\Big)\Big\}_{t\in[0,T]},
\end{equation*}
then by \eqref{cpresult2} of Theorem \ref{thm:condprocess},  $\bm Z$ is an $\overline\FF$-semimartingale. 

Let us for the moment assume that $\bm Y_t$ is bounded. Then applying It\^o's formula to  $f\big(\bm Z,\bm Y\big)$ yields
\begin{equation}\label{equitof}
    \begin{aligned}
    &f(\bm Z_t,\bm Y_t)-f(\bm Z_0,\bm Y_0)
    \\=&\underbrace{\int_{0+}^t\nabla_{\bm z} f(\bm Z_{s-},\bm Y_{s-})\cdot\d \bm Z_s}_{J_1}+\frac12\underbrace{\int_{0+}^t\nabla_{\bm z\bm z}^2 f(\bm Z_{s-},\bm Y_{s-}):\d [\bm Z,\bm Z]_s+\int_{0+}^t\nabla_{\bm z\bm y}^2 f(\bm Z_{s-},\bm Y_{s-}):\d [\bm Z,\bm Y]_s}_{J_2}
    \\
    \\
    -&\underbrace{\sum_{0<s\leq t}\Big\{\nabla_{\bm z} f(\bm Y_{s-},\bm Z_{s-})\cdot\Delta\bm Z_s+\frac12\nabla^2_{\bm z\bm z} f(\bm Y_{s-},\bm Z_{s-}):\big(\Delta\bm Z_s\Delta\bm Z_s^\top\big)+\nabla^2_{\bm z\bm y} f(\bm Y_{s-},\bm Z_{s-}):\big(\Delta\bm Z_s\Delta\bm Y_s^\top\big)\Big\}}_{J_3}
    \\
    &\quad+\int_{0+}^t\nabla_{\bm y} f(\bm Z_{s-},\bm Y_{s-})\cdot\d \bm Y^c_s+\frac12\int_{0+}^t\nabla_{\bm y\bm y}^2 f(\bm Z_{s-},\bm Y_{s-}):\d [\bm Y,\bm Y]^c_s
    \\
    &\quad+\sum_{0<s\leq t}\Big\{f(\bm Y_s,\bm Z_s)-f(\bm Y_{s-},\bm Z_{s-})\Big\}.
    \end{aligned}
\end{equation}
We will compute $J_1,J_2,$ and $J_3$ separately.  First,  note that both $g^{(i)}$ and $\bm Y_t$ are bounded, and $f\in C^{2}(\R^{n+d})$, therefore  $\nabla_{\bm z} f(\bm Z_{s-},\bm Y_{s-})$, $\nabla^2_{\bm z\bm y} f(\bm Z_{s-},\bm Y_{s-})$, and $\nabla^2_{\bm z} f(\bm Z_{s-},\bm Y_{s-})$  are bounded, and  the conditions for applying \eqref{cpresult3} and \eqref{cpresult4} of Theorem \ref{thm:condprocess} are satisfied. 

For the conditionally independent copy $\bm X'$, we have
\begin{equation*}\label{equ:itog}
    \begin{aligned}
    g^{(i)}(\bm X'_t)-g^{(i)}(\bm X'_0)
    =&\int_{0+}^t\nabla g^{(i)}(\bm X'_{s-})\cdot\d (\bm X')^c_s+\frac12\int_{0+}^t\nabla^2 g^{(i)}(\bm X_{s-}):\d [\bm X',\bm X']^c_s
    \\
    &\;\;+\sum_{0<s\leq t}\Big\{g^{(i)}(\bm X'_s)-g^{(i)}(\bm X'_{s-})\Big\},\text{ for }t\in[0,T].
    \end{aligned}
\end{equation*}
Therefore, by \eqref{cpresult3} of Theorem \ref{thm:condprocess}, $\d \bm Z_s = \d \big(\E[g^{(1)}(\bm X_s)|\G],\cdots,\E[g^{(n)}(\bm X_s)|\G]\big)$, and 
\begin{equation}\label{equj1}
    \begin{aligned}
J_1=\overline\E\Bigg[\int_{0+}^t&\sum_{i=1}^n\partial_{z_i}f(\bm Z_{s-},\bm Y_{s-})\bigg(\nabla g^{(i)}(\bm X'_{s-})\cdot\d (\bm X')^c_s+\frac12\nabla^2 g^{(i)}(\bm X'_{s-}):\d [\bm X',\bm X']^c_s\bigg)
    \\
    &+\sum_{0<s\leq t}\sum_{i=1}^n\partial_{z_i}f(\bm Z_{s-},\bm Y_{s-})\Big\{g^{(i)}(\bm X'_s)-g^{(i)}(\bm X'_{s-})\Big\}\bigg|\F\Bigg].
    \end{aligned}
\end{equation}
For $J_2$,  we see that
\begin{equation*}
    \begin{aligned}
    \big[&g^{(i)}(\bm X'_t),g^{(j)}(\bm X''_t)\big]_t=\big[(g^{(i)}(\bm X'))^c,(g^{j}(\bm X''))^c\big]_t+\sum_{0< s\leq t}\Delta g^{(i)}(\bm X'_s)\Delta g^{(j)}(\bm X''_s)
    \\
    &=\int_{0+}^t\Big(\nabla g^{(i)}(\bm X'_{s-})^\top\nabla g^{(j)}(\bm X''_{s-})\Big):\d [\bm X',\bm X'']^c_s
    \\
    &\qquad+\sum_{0<s\leq t}\Big\{\big(g^{(i)}(\bm X'_s)-g^{(i)}(\bm X'_{s-})\big)\big(g^{(j)}(\bm X''_s)-g^{(j)}(\bm X''_{s-})\big)\Big\},
    \end{aligned}
\end{equation*}
and 
\begin{equation*}
    \begin{aligned}
    \big[&g^{(i)}(\bm X'_t),Y^{(j)}\big]_t=\big[(g^{(i)}(\bm X'))^c,(Y^{j})^c\big]_t+\sum_{0< s\leq t}\Delta g^{(i)}(\bm X'_s)\Delta Y^{(j)}_s
    \\
    &=\int_{0+}^t\nabla g^{(i)}(\bm X'_{s-})\cdot\d [\bm X',Y^{(j)}]^c_s+\sum_{0<s\leq t}\Big\{\big(g^{(i)}(\bm X'_s)-g^{(i)}(\bm X'_{s-})\big)\big(Y^{(k)}_s-Y^{(k)}_{s-}\big)\Big\}.
    \end{aligned}
\end{equation*}
Consequently,
by \eqref{cpresult4} of Theorem \ref{thm:condprocess}, 
\begin{equation}\label{equj2}
    \begin{aligned}
    J_2=\overline\E\Bigg[\int_{0+}^t&\bigg(\frac12\sum_{i,j=1}^n\partial^2_{z_iz_j}f(\bm Z_{s-},\bm Y_{s-})\Big(\big(\nabla g^{(i)}(\bm X'_{s-})\nabla g^{(j)}(\bm X''_{s-})^\top\big):\d [\bm X',\bm X'']^c_s\Big)    \\&+\sum_{i=1}^n\sum_{k=1}^d\partial^2_{z_iy_k}f(\bm Z_{s-},\bm Y_{s-})\Big(\nabla g^{(i)}(\bm X'_{s-})\cdot\d [\bm X',Y^{(k)}]^c_s\Big)\bigg)    
    \\
    +\sum_{0<s\leq t}&\bigg(\frac12\sum_{i,j=1}^n\partial^2_{z_iz_j}f(\bm Z_{s-},\bm Y_{s-})\Big\{\big(g^{(i)}(\bm X'_s)-g^{(i)}(\bm X'_{s-})\big)\big(g^{(j)}(\bm X'_s)-g^{(j)}(\bm X'_{s-})\big)\Big\}    \\&+\sum_{i=1}^n\sum_{k=1}^d\partial^2_{z_iy_k}f(\bm Z_{s-},\bm Y_{s-})\Big\{\big(g^{(i)}(\bm X'_s)-g^{(i)}(\bm X'_{s-})\big)\big(Y^{(k)}_s-Y^{(k)}_{s-}\big)\Big\}\bigg)\bigg|\F\Bigg].
    \end{aligned}
\end{equation}
To deal with $J_3$, note that 
\begin{equation*}
    \begin{aligned}
    \Delta Z^{(i)}_t&=\big\langle\mu_t,g^{(i)}\big\rangle-\lim_{s\nearrow t}\big\langle\mu_s,g^{(i)}\big\rangle=\overline\E\big[g^{(i)}(\bm X_t)\big|\G_T\big]-\lim_{s\nearrow t}\overline\E\big[g^{(i)}(\bm X_s)\big|\G_T\big]
    \\
    &=\overline\E\big[g^{(i)}(\bm X_t)\big|\G_T\big]-\overline\E\big[\lim_{s\nearrow t}g^{(i)}(\bm X_s)\big|\G_T\big]=\overline\E\big[g^{(i)}(\bm X_t)-g^{(i)}(\bm X_{t-})\big|\G_T\big]
    \\
    &=\overline\E\big[g^{(i)}(\bm X'_t)-g^{(i)}(\bm X'_{t-})\big|\F\big],
    \end{aligned}
\end{equation*}
where the third equality holds by the  conditional dominated convergence theorem (Lemma \ref{lem:CDCT}). Therefore, we can rewrite $J_3$ as
\begin{equation}\label{equj3}
    \begin{aligned}
    J_3=\sum_{0<s\leq t}\bigg\{\overline\E\bigg[&\frac12\sum_{i,j=1}^n\partial^2_{z_iz_j} f(\bm Y_{s-},\bm Z_{s-})\big(g^{(i)}(\bm X'_s)-g^{(i)}(\bm X'_{s-})\big)\big(g^{(j)}(\bm X''_s)-g^{(j)}(\bm X''_{s-})\big)
    \\
    &+\sum_{i=1}^n\partial_{z_i} f(\bm Y_{s-},\bm Z_{s-})\big(g^{(i)}(\bm X'_s)-g^{(i)}(\bm X'_{s-})\big)\bigg|\F\bigg]\bigg\}
    \\
    +\sum_{0<s\leq t}\overline\E\bigg[&\sum_{i=1}^n\sum_{k=1}^d\partial^2_{z_iy_k} f(\bm Y_{s-},\bm Z_{s-})\big(g^{(i)}(\bm X'_s)-g^{(i)}(\bm X'_{s-})\big)\big(Y^{(k)}_s-Y^{(k)}_{s-}\big)\bigg|\F\Bigg],
    \end{aligned}
\end{equation}
since $\partial^2_{z_iz_j} f(\bm Y_{s-},\bm Z_{s-})$, $\partial_{z_i} f(\bm Y_{s-},\bm Z_{s-})$, $\partial^2_{z_iy_k} f(\bm Y_{s-},\bm Z_{s-})$, $Y^{(k)}_s$ and $Y^{(k)}_{s-}$ are $\F$-measurable random variables for $s\in(0,t]$, $1\leq i,j\leq n$, and $1\leq k\leq d$.

Recall that $
    \Phi(\mu,\bm y)=f\big(\langle \mu,g^{(1)}\rangle,..\langle \mu,g^{(n)}\rangle,\bm y\big)$,
simple calculation of linear derivatives yields a version of the derivatives by
\begin{equation*} 
    \begin{aligned}
    \nabla_{\bm y}\Phi(\mu,\bm y)=\nabla_{\bm y}f\big(\langle \mu,g^{(1)}\rangle,..\langle \mu,g^{(n)},\bm y\rangle\big),\quad\nabla^2_0\Phi(\mu,\bm y)=\nabla^2_{\bm y\bm y}f\big(\langle \mu,g^{(1)}\rangle,..\langle \mu,g^{(n)}\rangle,\bm y\big),
    \\
    \frac{\delta \Phi}{\delta\mu}(\mu,\bm y,\bm x_1)=\sum_{i=1}^n\partial_{z_i}f\big(\langle \mu,g^{(1)}\rangle,..\langle \mu,g^{(n)}\rangle,\bm y\big)g^{(i)}(\bm x_1),
    \\
    \frac{\delta^2 \Phi}{(\delta\mu)^2}(\mu,\bm y,\bm x_1,\bm x_2)=\sum_{i,j=1}^n\partial_{z_iz_j}f\big(\langle \mu,g^{(1)}\rangle,..\langle \mu,g^{(n)}\rangle,\bm y\big)g^{(i)}(\bm x_1)g^{(j)}(\bm x_2).
    \end{aligned}
\end{equation*}
Therefore, we can rewrite $J_1$ as
\begin{equation}\label{equj1new}
    \begin{aligned}
    &\;\overline\E\bigg[\int_{0+}^t\bigg(\nabla_{\bm x_1}\frac{\delta \Phi}{\delta\mu}\big(\mu_{s-},\bm Y_{s-},\bm X'_{s-}\big)\cdot\d (\bm X')^c_s+\frac12\nabla^2_{\bm x_1\bm x_1}\frac{\delta \Phi}{\delta\mu}\big(\mu_{s-},\bm Y_{s},\bm X'_s\big):\d [\bm X',\bm X']^c_t\bigg)
    \\
    &+\sum_{0<s\leq t}\bigg(\frac{\delta \Phi}{\delta\mu}\big(\mu_{s-},\bm Y_{s-},\bm X'_s\big)-\frac{\delta \Phi}{\delta\mu}\big(\mu_{s-},\bm Y_{s-},\bm X'_{s-}\big)\bigg)\bigg|\F\bigg],
    \end{aligned}
\end{equation}
 $J_2$ as 
\begin{equation}\label{equj2new}
    \begin{aligned}
    \overline\E\bigg[&\int_{0+}^t\bigg(\frac12\nabla^2_{\bm x_1\bm x_2}\frac{\delta^2 \Phi}{(\delta\mu)^2}\big(\mu_{s-},\bm Y_{s},\bm X'_s,\bm X''_s\big):\d [\bm X',\bm X'']^c_t+\nabla^2_{\bm x_1\bm y}\frac{\delta \Phi}{\delta\mu}\big(\mu_{s-},\bm Y_{s-},\bm X'_{s-}\big):\d [\bm X',\bm Y]^c_s\bigg)
    \\
    &+\sum_{0<s\leq t}\bigg(\frac12\bigg(\frac{\delta^2 \Phi}{(\delta\mu)^2}\big(\mu_{s-},\bm Y_{s-},\bm X'_{s},\bm X''_{s}\big)-\frac{\delta^2 \Phi}{(\delta\mu)^2}\big(\mu_{s-},\bm Y_{s-},\bm X'_{s-},\bm X''_{s}\big)
    \\
    &\qquad\qquad\qquad-\frac{\delta^2 \Phi}{(\delta\mu)^2}\big(\mu_{s-},\bm Y_{s-},\bm X'_{s},\bm X''_{s-}\big)+\frac{\delta^2 \Phi}{(\delta\mu)^2}\big(\mu_{s-},\bm Y_{s-},\bm X'_{s-},\bm X''_{s-}\big)\bigg)
    \\
    &\qquad\qquad\qquad+\bigg(\nabla_{\bm y}\frac{\delta \Phi}{\delta\mu}\big(\mu_{s-},\bm Y_{s-},\bm X'_{s}\big)-\nabla_{\bm y}\frac{\delta \Phi}{\delta\mu}\big(\mu_{s-},\bm Y_{s-},\bm X'_{s-}\big)\bigg)\cdot\Delta \bm Y_s\bigg)\bigg|\F\bigg],
    \end{aligned}
\end{equation}
and $J_3$ as
\begin{equation}\label{equj3new}
    \begin{aligned}
    \sum_{0<s\leq t}\bigg(\frac12\overline\E&\bigg[\frac{\delta^2 \Phi}{(\delta\mu)^2}\big(\mu_{s-},\bm Y_{s-},\bm X'_{s},\bm X''_{s}\big)-\frac{\delta^2 \Phi}{(\delta\mu)^2}\big(\mu_{s-},\bm Y_{s-},\bm X'_{s-},\bm X''_{s}\big)
    \\
    &-\frac{\delta^2 \Phi}{(\delta\mu)^2}\big(\mu_{s-},\bm Y_{s-},\bm X'_{s},\bm X''_{s-}\big)+\frac{\delta^2 \Phi}{(\delta\mu)^2}\big(\mu_{s-},\bm Y_{s-},\bm X'_{s-},\bm X''_{s-}\big)
    \bigg|\F\bigg]
    \\
    +&\overline\E\bigg[\bigg(\nabla_{\bm y}\frac{\delta \Phi}{\delta\mu}\big(\mu_{s-},\bm Y_{s-},\bm X'_{s}\big)-\nabla_{\bm y}\frac{\delta \Phi}{\delta\mu}\big(\mu_{s-},\bm Y_{s-},\bm X'_{s-}\big)\bigg)\cdot\Delta \bm Y_s\bigg|\F\bigg]
    \\
    +&\overline\E\bigg[\frac{\delta \Phi}{\delta\mu}\big(\mu_{s-},\bm Y_{s-},\bm X'_{s}\big)-\frac{\delta \Phi}{\delta\mu}\big(\mu_{s-},\bm Y_{s-},\bm X'_{s-}\big)\bigg|\F\bigg]\bigg)
    .
\end{aligned}
\end{equation}
Using the  argument at the end of \cite[Section 3.2.1]{guo2023ito}, one can cancel out redundant terms in $J_2$ and $J_3$ to get the first summation in the right hand side of \eqref{mainito}. Therefore, combining \eqref{equj1new}, \eqref{equj2new}, and \eqref{equj3new} and plugging them back into \eqref{equitof} completes the proof.


\subsection{Step 2: localization argument.}\label{subsec:local}
Take a sequence of $\FF$-stopping time $\{\tau_n\}_{n\in\N}$ such that $\tau_n\to T$, in order to prove  \eqref{mainito}  for $\Phi\in C^{2,2}(\PP_p(\R^d)\times\R^d)$ with the process $(\mu,\bm Y)=(\mu_{t\wedge\tau_n},\bm
Y_{t\wedge\tau_n})_{t\in[0,T]}$, it suffices to prove  \eqref{mainito}  for $\Phi$ with the truncated process $(\mu^{(\tau_n)},\bm Y^{(\tau_n)}):=(\mu_{t\wedge\tau_n},\bm Y_{t\wedge\tau_n})_{t\in[0,T]}$. 
Indeed, suppose \eqref{mainito} holds for the truncated process, for $t<T$. Let $n\to\infty$, it is not hard to see that the left hand side of \eqref{mainito} converges and the last three terms of the right hand side of \eqref{mainito} converge to the desired equation. 

Now, let us split the conditional expectation term of the right hand side of \eqref{mainito} into several terms. First, in order to prove that 
\begin{equation}\label{equ:firstlim}
    \overline\E\bigg[\int_{0+}^{t\wedge \tau_n}\nabla_{\bm x_1}\frac{\delta \Phi}{\delta\mu}\big(\mu_{s-},\bm Y_{s-},\bm X'_{s-}\big)\cdot\d (\bm X')^c_s\bigg|\F\bigg]\to\overline\E\bigg[\int_{0+}^{t}\nabla_{\bm x_1}\frac{\delta \Phi}{\delta\mu}\big(\mu_{s-},\bm Y_{s-},\bm X'_{s-}\big)\cdot\d (\bm X')^c_s\bigg|\F\bigg],
\end{equation}
a.s. as $n\to\infty$, we separate it by,
\begin{equation}\label{equ:temp6}
    \begin{aligned}
    \int_{0+}^{t\wedge \tau_n}&\nabla_{\bm x_1}\frac{\delta \Phi}{\delta\mu}\big(\mu_{s-},\bm Y_{s-},\bm X'_{s-}\big)\cdot\d (\bm X')^c_s=
    \\
    &\int_{0+}^{t\wedge\tau_n}\nabla_{\bm x_1}\frac{\delta \Phi}{\delta\mu}\big(\mu_{s-},\bm Y_{s-},\bm X'_{s-}\big)\cdot\d \bm X'_s-\sum_{0<s\leq t\wedge\tau_n}\nabla_{\bm x_1}\frac{\delta \Phi}{\delta\mu}\big(\mu_{s-},\bm Y_{s-},\bm X'_{s-}\big)\cdot\Delta \bm X'_s. 
    \end{aligned}
\end{equation}
Note that the first term is uniformly bounded by
\begin{equation*}
    \sup_{0\leq s\leq T}\bigg|\int_{0+}^{s}\nabla_{\bm x_1}\frac{\delta \Phi}{\delta\mu}\big(\mu_{u-},\bm Y_{u-},\bm X'_{u-}\big)\cdot\d \bm X'_u\bigg|,
\end{equation*}
and
\begin{equation*}
    \begin{aligned}
    \overline\E\bigg[\sup_{0\leq s\leq T}\bigg|\int_{0+}^{s}&\nabla_{\bm x_1}\frac{\delta \Phi}{\delta\mu}\big(\mu_{u-},\bm Y_{u-},\bm X'_{u-}\big)\cdot\d \bm X'_u\bigg|\bigg]=\bigg\|\int_{0+}^{\cdot}\nabla_{\bm x_1}\frac{\delta \Phi}{\delta\mu}\big(\mu_{u-},\bm Y_{u-},\bm X'_{u-}\big)\cdot\d \bm X'_u\bigg\|_{\S_1}
    \\
    &\leq c_1\bigg\|\int_{0+}^{\cdot}\nabla_{\bm x_1}\frac{\delta \Phi}{\delta\mu}\big(\mu_{u-},\bm Y_{u-},\bm X'_{u-}\big)\cdot\d \bm X'_u\bigg\|_{\H_1}
    \\
    &\leq c_1\bigg\|\nabla_{\bm x_1}\frac{\delta \Phi}{\delta\mu}\big(\mu_{\cdot},\bm Y_{\cdot},\bm X'_{\cdot}\big)\bigg\|_{\S_{p'}}\big\|\bm X'\big\|_{\H_p}\leq c_1C\big\||\bm X|^{p-1}+|\bm Y|^{p-1}+1\big\|_{\S_{p'}}\big\|\bm X\big\|_{\H_p}
    \\
    &\leq c_pc_1C\big(\big\|\bm X\big\|^p_{\H_p}+\big\|\bm Y\big\|^p_{\H_p}+1\big)\big\|\bm X\big\|_{\H_p}<\infty,
    \end{aligned}
\end{equation*}
where $p'=\frac{p}{p-1}$, $c_1$ and $c_p$ are constants in Proposition \ref{prop:sleqh}, and $C$ is a generic constant that may vary line by line. The first inequality is due to Proposition \ref{prop:sleqh}, the second inequality is by Proposition \ref{prop:emery}, the third inequality is from $\Phi\in C^{2,2}(\PP_p(\R^d)\times\R^d)$, and the last one holds because $\frac1p+\frac1{p'}=1$. Meanwhile, the second term of the right hand side of \eqref{equ:temp6} is bounded by
\begin{equation}\label{bd1}
    \begin{aligned}
    \sup_{0\leq s\leq T}\bigg|\nabla_{\bm x_1}\frac{\delta \Phi}{\delta\mu}\big(\mu_{s-},&\bm Y_{s-},\bm X'_{s-}\big)\bigg|\cdot\sum_{0<s\leq T}|\Delta \bm X'_s|\leq C \sup_{0\leq s\leq T}\big(1+|\bm X_s|^{p-1}+|\bm Y_s|^{p-1}\big)\cdot\sum_{0<s\leq T}|\Delta \bm X'_s|
    \\
    &\leq C\bigg(1+\sup_{0\leq s\leq T}|\bm X_s|^{p}+\sup_{0\leq s\leq T}|\bm Y_s|^{p}+\bigg(\sum_{0<s\leq T}|\Delta \bm X'_s|\bigg)^p\bigg), 
    \end{aligned}
\end{equation}
where $C$ is a generic constant, and the right hand side of the above inequality is integrable because $\|\bm X\|_{\S_p}\leq c_p\|\bm X\|_{\H_p}<\infty$, $\|\bm Y\|_{\S_p}\leq c_p\|\bm Y\|_{\H_p}<\infty$, and by the assumption of $\bm X$. Therefore, by the conditional dominated convergence theorem
{(see Lemma \ref{lem:CDCT})}, we conclude \eqref{equ:firstlim}. 

Next, to prove that 
\begin{equation}\label{equ:secondlim}
    \begin{aligned}
\overline\E\bigg[\int_{0+}^{t\wedge\tau_n}&\nabla^2_{\bm x_1\bm x_1}\frac{\delta \Phi}{\delta\mu}\big(\mu_{s-},\bm Y_{s-},\bm X'_{s-}\big):\d [\bm X',\bm X']^c_s\bigg|\F\bigg]
\\
&\to \overline\E\bigg[\int_{0+}^{t}\nabla^2_{\bm x_1\bm x_1}\frac{\delta \Phi}{\delta\mu}\big(\mu_{s-},\bm Y_{s-},\bm X'_{s-}\big):\d [\bm X',\bm X']^c_s\bigg|\F\bigg],
\end{aligned}
\end{equation}
a.s. as $n\to\infty$,  note that 
\begin{equation*}
    \begin{aligned}
        &\int_{0+}^{t\wedge\tau_n}\nabla^2_{\bm x_1\bm x_1}\frac{\delta \Phi}{\delta\mu}\big(\mu_{s-},\bm Y_{s-},\bm X'_{s-}\big):\d [\bm X',\bm X']^c_s
    \end{aligned}
\end{equation*} 
is bounded by
\begin{equation*}
    \sup_{0\leq s\leq T}\bigg|\int_{0+}^{s}\nabla^2_{\bm x_1\bm x_1}\frac{\delta \Phi}{\delta\mu}\big(\mu_{u-},\bm Y_{u-},\bm X'_{u-}\big):\d [\bm X',\bm X']^c_u\bigg|\leq\sup_{0\leq s\leq T}\bigg|\nabla^2_{\bm x_1\bm x_1}\frac{\delta \Phi}{\delta\mu}\big(\mu_{s-},\bm Y_{s-},\bm X'_{s-}\big)\bigg|\cdot [\bm X',\bm X']^c_T,
\end{equation*}
and
\begin{equation*}
    \begin{aligned}
    \overline \E\bigg[\sup_{0\leq s\leq T}\bigg|\nabla^2_{\bm x_1\bm x_1}\frac{\delta \Phi}{\delta\mu}\big(\mu_{s-},\bm Y_{s-},\bm X'_{s-}\big)&\bigg|\cdot [\bm X',\bm X']^c_T\bigg]\leq \bigg\|\nabla^2_{\bm x_1\bm x_1}\frac{\delta \Phi}{\delta\mu}\big(\mu_{\cdot},\bm Y_{\cdot},\bm X'_{\cdot}\big)\bigg\|_{\S_{p''}}\big\|[\bm X',\bm X']^c_T\big\|_{L_{p/2}}
    \\
    &\leq C\Big\|1+|\bm X'|^{p-2}+|\bm Y|^{p-2}\Big\|_{\S_{p''}}\big\|[\bm X',\bm X']^c_T\big\|_{L_{p/2}}
    \\
    &\leq c_{p''}C\Big(1+\|\bm X'\|^{p}_{\H_{p}}+\|\bm Y\|^{p}_{\H_{p}}\Big)\big\|[\bm X',\bm X']^c_T\big\|_{L_{p/2}}<\infty,
    \end{aligned}
\end{equation*}
where $c_{p''}$ is the constant in Proposition \ref{prop:sleqh}, $C$ is a generic constant that may vary from line to line and $p''=\frac{p-2}{2p}$. Thus,  \eqref{equ:secondlim} holds by Lemma \ref{lem:CDCT}. Similar arguments holds also for the convergence of $\int_{0+}^{t\wedge \tau_n}\nabla^2_{\bm x_1\bm x_2}\frac{\delta^2 \Phi}{(\delta\mu)^2}\big(\mu_{s-},\bm Y_{s-},\bm X'_{s-},\bm X''_{s-}\big):\d [\bm X',\bm X'']_s$ and $\int_{0+}^{t\wedge \tau_n}\nabla^2_{\bm x_1\bm y}\frac{\delta \Phi}{\delta\mu}\big(\mu_{s-},\bm Y_{s-},\bm X'_{s-}\big):\d [\bm X',\bm Y]_s$.

Next, let us deal with the convergence of
\begin{equation} \label{eqtem4} 
    \begin{aligned}
\sum_{0<s\leq t\wedge \tau_n}\bigg(&\frac{\delta^2 \Phi}{(\delta\mu)^2}\big(\mu_{s-},\bm Y_{s-},\bm X'_{s},\bm X''_{s}\big)-\frac{\delta^2 \Phi}{(\delta\mu)^2}\big(\mu_{s-},\bm Y_{s-},\bm X'_{s-},\bm X''_{s}\big)
\\
&-\frac{\delta^2 \Phi}{(\delta\mu)^2}\big(\mu_{s-},\bm Y_{s-},\bm X'_{s},\bm X''_{s-}\big)+\frac{\delta^2 \Phi}{(\delta\mu)^2}\big(\mu_{s-},\bm Y_{s-},\bm X'_{s-},\bm X''_{s-}\big)\bigg)\mathbbm {1}_{\{\mu_s=\mu_{s-}\}}.
\end{aligned}
\end{equation}
Applying  the fundamental theorem of calculus for the linear derivative, we have 
\begin{equation*}  
    \begin{aligned}
&\frac{\delta^2 \Phi}{(\delta\mu)^2}\big(\mu_{s-},\bm Y_{s-},\bm X'_{s},\bm X''_{s}\big)-\frac{\delta^2 \Phi}{(\delta\mu)^2}\big(\mu_{s-},\bm Y_{s-},\bm X'_{s-},\bm X''_{s}\big)
\\
&-\frac{\delta^2 \Phi}{(\delta\mu)^2}\big(\mu_{s-},\bm Y_{s-},\bm X'_{s},\bm X''_{s-}\big)+\frac{\delta^2 \Phi}{(\delta\mu)^2}\big(\mu_{s-},\bm Y_{s-},\bm X'_{s-},\bm X''_{s-}\big)
\\
=\int_0^1&\int_0^1\nabla^2_{\bm x_1\bm x_2}\frac{\delta^2 \Phi}{(\delta\mu)^2}\big(\mu_{s-},\bm Y_{s-},\bm X'_{s-}+\lambda(\bm X'_{s}-\bm X'_{s-}),\bm X''_{s-}+\gamma(\bm X''_{s}-\bm X''_{s-})\big):\Big(\big(\Delta \bm X'_s\big)^\top\Delta \bm X''_s\Big)\d \lambda\d\gamma,
\end{aligned}
\end{equation*}
and the right hand side of the above equation  is bounded by
\begin{equation*}
    C\cdot |\Delta \bm X'_s|\cdot |\Delta \bm X''_s|\cdot \Big(1+|\bm X'_s|^{p-2}+|\bm X''_s|^{p-2}\Big).
\end{equation*}
Therefore, \eqref{eqtem4} is bounded by 
\begin{equation*}
    \begin{aligned}
    &\sup_{0\leq s\leq T}\Big(1+|\bm X'_s|^{p-2}+|\bm X''_s|^{p-2}\Big)\big(\sum_{0<s\leq T}|\Delta \bm X'_s|\cdot |\Delta \bm X''_s|\big)
    \\
    &\leq C\bigg(\sup_{0\leq t\leq T}\Big(1+|\bm X'_t|^{p-2}+|\bm X''_t|^{p-2}\Big)^{\frac{p}{p-2}}+\Big(\sum_{0<s\leq T}|\Delta \bm X'_s|^2+\sum_{0<s\leq T}|\Delta \bm X''_s|^2\Big)^{\frac{p}{2}}\bigg)
    \\
    &\leq C\bigg(\sup_{0\leq t\leq T}\Big(1+|\bm X'_t|^{p}+|\bm X''_t|^{p}\Big)+\sum_{0<s\leq T}|\Delta \bm X'_s|^{p}+\sum_{0<s\leq T}|\Delta \bm X''_s|^{p}\bigg), 
    \end{aligned}
\end{equation*}
where $C$ is a generic constant. The right hand side of the above inequality is integrable and the convergence of the conditional expectation of \eqref{eqtem4} is ensured by Lemma \ref{lem:CDCT}. Similar arguments can be applied to deduce the convergence of the conditional expectation of $\big(\nabla_{\bm y}\frac{\delta \Phi}{\delta\mu}\big(\mu_{s-},\bm Y_{s-},\bm X'_{s}\big)-\nabla_{\bm y}\frac{\delta \Phi}{\delta\mu}\big(\mu_{s-},\bm Y_{s-},\bm X'_{s-}\big)\big)\cdot\Delta \bm Y_s$ and $\frac{\delta \Phi}{\delta\mu}\big(\mu_{s-},\bm Y_{s-},\bm X'_s\big)-\frac{\delta \Phi}{\delta\mu}\big(\mu_{s-},\bm Y_{s-},\bm X'_{s-}\big)$,  and thus we conclude that \eqref{mainito} holds for the original process $(\mu,\bm Y)$ for $0\leq t<T$. 

For the case of  $t=T$, taking $n\to\infty$, then the left hand side equals $\Phi(\mu_{T-},\bm Y_{T-})-\Phi(\mu_0,\bm Y_0)$ while all terms on the right hand side are  evaluated at $T$ except for the last term which is 
\begin{equation*}
    \sum_{0<s<t}\Big(\Phi(\mu_s,\bm Y_s)-\Phi(\mu_{s-},\bm Y_{s-})\Big),
\end{equation*}
adding $\Phi(\mu_{T},\bm Y_{T})-\Phi(\mu_{T-},\bm Y_{T-})$ to both sides, we finish the  proof of the claim.

\subsection{Step 3: cylindrical functions are dense in $C^{2,2}(\R^d\times\mathcal{P}_p(\R^d))$.}\label{subsec:dense}
The following result slightly generalizes that in \cite[Theorem 4.4, Theorem 4.10, Section 4]{cox2024controlled} as it also includes the convergences of the associated linear derivatives.
\begin{proposition}\label{dense_cylindrical}
Let $\Phi \in C^{2,2}(\PP_p(\R^d)\times \R^d)$, then there exists a sequence of  $C^{2,2}$ cylinder functions $\{f_n\}_{n\in\N^+}$  such that one has the point-wise convergence 
\begin{equation}\label{cylin_converge}
    \begin{aligned}
    \bigg(f_n, \nabla_{\bm y} f_n&, \nabla_{\bm y}^2 f_n, \frac{\delta f_n}{\delta \mu}, \nabla_{\bm x_1}\frac{\delta f_n}{\delta\mu},\nabla^2_{\bm x_1}\frac{\delta f_n}{\delta\mu},\nabla_{\bm x_1\bm y}\frac{\delta f_n}{\delta\mu},\nabla_{\bm x_1\bm x_2}\frac{\delta^2 f_n}{(\delta \mu)^2}\bigg)
    \\
    &\to \bigg(\Phi, \nabla_{\bm y} \Phi, \nabla_{\bm y}^2 \Phi, \frac{\delta \Phi}{\delta \mu}, \nabla_{\bm x_1}\frac{\delta \Phi}{\delta\mu},\nabla^2_{\bm x_1}\frac{\delta \Phi}{\delta\mu},\nabla_{\bm x_1\bm y}\frac{\delta \Phi}{\delta\mu},\nabla_{\bm x_1\bm x_2}\frac{\delta^2 \Phi}{(\delta \mu)^2}\bigg)
    \end{aligned}
\end{equation}
as $n\to\infty$,  for all $\bm x_1,\bm x_2,\bm y\in\R^d$, $\mu\in \PP_p(\R^d)$. Moreover, there exists a constant $c>0$ such
that
\begin{equation}\label{cylin_boun}
    \begin{aligned}
    |\nabla_{\bm y} f_n|(\mu,\bm y)\leq c(1+|\bm y|^{p-1}),\qquad|\nabla^2_{\bm y} f_n|(\mu,\bm y)&\leq c(1+|\bm y|^{p-2}),
    \\
    \bigg|\frac{\delta f_n}{\delta \mu}\bigg|\leq c\big(1+|\bm x_1|^{p}+|\bm y|^{p}\big),\qquad \bigg|\nabla_{\bm x_1}\frac{\delta f_n}{\delta\mu}\bigg|&\leq c\big(1+|\bm x_1|^{p-1}+|\bm y|^{p-1}\big),
    \\
    \bigg|\nabla^2_{\bm x_1}\frac{\delta f_n}{\delta\mu}\bigg|\leq c\big(1+|\bm x_1|^{p-2}+|\bm y|^{p-2}\big),\qquad\bigg|\nabla^2_{\bm x_1\bm y}\frac{\delta f_n}{\delta\mu}\bigg|&\leq c\big(1+|\bm x_1|^{{p-2}}+|\bm y|^{p-2}\big),
    \\
    \bigg|\nabla^2_{\bm x_1\bm x_2}\frac{\delta^2 f_n}{(\delta \mu)^2}\bigg|&\leq c\big(1+|\bm x_1|^{p-2}+|\bm x_2|^{p-2}+|\bm y|^{p-2}\big),
    \end{aligned}
\end{equation}
for all $\bm x_1,\bm x_2,\bm y\in\R^d$, $\mu\in \PP_p(\R^d)$.
\end{proposition}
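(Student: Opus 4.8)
The plan is to construct the approximating cylindrical functions in two stages: first a mollification in the measure argument to produce functions that depend on $\mu$ only through finitely many linear functionals, and then a further mollification and cutoff in the finite-dimensional variables to upgrade smoothness and boundedness of the coefficient function $f$. For the first stage I would follow the strategy of \cite[Section 4]{cox2021controlled}: fix a countable dense family $\{\varphi_k\}_{k\in\N^+}\subset C^2_b(\R^d)$ in the appropriate topology, and for each $n$ approximate $\mu$ by pushing forward through a map determined by the moments $\langle\mu,\varphi_1\rangle,\dots,\langle\mu,\varphi_{m_n}\rangle$; concretely one convolves the empirical-type encoding of $\mu$ with a smooth kernel on $\R^{m_n}$ so that $\Phi(\mu,\bm y)$ is replaced by $F_n\big(\langle\mu,\varphi_1\rangle,\dots,\langle\mu,\varphi_{m_n}\rangle,\bm y\big)$ for a smooth $F_n$. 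One then uses the fundamental theorem of calculus property in the definition of $C^{2,2}$ to express $\Phi(\mu,\bm y)-F_n(\dots,\bm y)$ as an integral of $\tfrac{\delta\Phi}{\delta\mu}$ against $\mu$ minus its approximation, and similarly for the first and second linear derivatives, which gives the pointwise convergence in \eqref{cylin_converge} once the measure approximation is shown to converge in $\PP_p$.

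Next I would carry out the finite-dimensional regularization: the $F_n$ produced above may only be $C^2$ with polynomially growing derivatives, so one mollifies $F_n$ in all its arguments by a standard smooth compactly supported kernel (giving $C^\infty$, hence certainly the $C^2$ regularity in $\bm y$ and in the linear-derivative slots required by Definition \ref{def:cylin}), and multiplies the $\varphi_k$'s by nothing since they are already in $C^2_b$. The composition $f_n\big(\langle\mu,g^{(1)}\rangle,\dots,\langle\mu,g^{(N_n)}\rangle,\bm y\big)$ with $f_n\in C^2(\R^{N_n+d})$ and $g^{(i)}\in C^2_b(\R^d)$ is then exactly a $C^{2,2}$ cylindrical function, and the chain-rule formulas for linear derivatives of cylindrical functions (displayed in Section \ref{sec:Itocyl}) show that the derivatives of $f_n$ are finite linear combinations of products of $\partial f_n$ and $g^{(i)}$ and their gradients. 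The growth bounds \eqref{cylin_boun} then follow because the $g^{(i)}$ and their first two derivatives are bounded, so the only source of growth is in $\bm y$ through $\nabla_{\bm y}f_n$ etc.; one controls these by passing the polynomial-growth bounds on $\nabla_{\bm y}\Phi$, $\tfrac{\delta\Phi}{\delta\mu}$, $\nabla_{\bm x_1}\tfrac{\delta\Phi}{\delta\mu}$, $\dots$ from the hypothesis $\Phi\in C^{2,2}(\PP_p(\R^d)\times\R^d)$ through the (uniformly bounded in $n$) mollification and convex-combination operations, using that convolution with a probability kernel does not increase the relevant sup-type seminorms by more than a universal constant.

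The key steps in order are therefore: (1) fix the dense family $\{\varphi_k\}$ and the projection/mollification scheme on the measure side, producing $F_n$ and establishing $\PP_p$-convergence of the approximating measures uniformly on bounded sets of $\mu$; (2) use the fundamental-theorem-of-calculus identities to transfer this to pointwise convergence of $\Phi$ and of both linear derivatives, and then differentiate in $\bm y$ and in the $\bm x_i$ slots to get the remaining convergences in \eqref{cylin_converge}; (3) mollify $F_n$ in the Euclidean variables to obtain $f_n\in C^2$; (4) verify the cylindrical form and compute the linear derivatives of $f_n$ via the chain rule, reading off \eqref{cylin_boun} from the boundedness of the $g^{(i)}$ together with the polynomial-growth constants of $\Phi$. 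I expect the main obstacle to be step (1)–(2): making the measure-side approximation converge in the Wasserstein-$p$ sense (not merely weakly) while simultaneously controlling the derivatives of the approximants uniformly in $n$, since one must ensure that the mollification in $\R^{m_n}$ and the truncation needed for integrability interact correctly with the $p$-th moment growth — this is precisely the point where the refinement over \cite[Theorem 4.4, Theorem 4.10]{cox2021controlled} is needed, because those results do not track convergence of $\nabla_{\bm x_1}\tfrac{\delta\Phi}{\delta\mu}$, $\nabla^2_{\bm x_1}\tfrac{\delta\Phi}{\delta\mu}$, $\nabla^2_{\bm x_1\bm y}\tfrac{\delta\Phi}{\delta\mu}$ and $\nabla^2_{\bm x_1\bm x_2}\tfrac{\delta^2\Phi}{(\delta\mu)^2}$, and one has to check that the same construction delivers these automatically from the continuity and growth hypotheses built into the definition of $C^{2,2}$.
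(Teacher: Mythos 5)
Your proposal takes essentially the same route as the paper: both defer to the operator construction of \cite[Section 4]{cox2021controlled}, namely $T_n:C_b(\R^d)\to C_b(\R^d)$ with adjoint $T_n^*:\PP_p(\R^d)\to\PP_p(\R^d)$, and set $f_n(\mu,\bm y)=\Phi(T_n^*\mu,\bm y)$, which is then automatically a cylindrical function. Two remarks on where you deviate. First, your second stage of mollifying $F_n$ in its Euclidean arguments is redundant and counterproductive: Definition \ref{def:cylin} only requires $f\in C^2(\R^{n+d})$, which the $C^{2,2}$ regularity of $\Phi$ already provides for the map $(z_1,\dots,z_n,\bm y)\mapsto\Phi(T_n^*\mu,\bm y)$; moreover, mollifying would spoil the exact identities $\frac{\delta f_n}{\delta\mu}(\mu,\bm y,\bm x_1)=T_n\big(\frac{\delta\Phi}{\delta\mu}(T_n^*\mu,\bm y,\cdot)\big)(\bm x_1)$ and $\frac{\delta^2 f_n}{(\delta\mu)^2}=T_n^{\otimes2}\frac{\delta^2\Phi}{(\delta\mu)^2}(T_n^*\mu,\bm y,\cdot,\cdot)$, which is what the whole argument rests on, and you would have to reintroduce an extra error term in every convergence estimate. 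Second, the genuinely new content of Proposition \ref{dense_cylindrical} relative to \cite{cox2021controlled} is the convergence and the uniform bounds \eqref{cylin_boun} for $\nabla_{\bm x_1}\frac{\delta f_n}{\delta\mu}$, $\nabla^2_{\bm x_1}\frac{\delta f_n}{\delta\mu}$, $\nabla^2_{\bm x_1\bm y}\frac{\delta f_n}{\delta\mu}$ and $\nabla^2_{\bm x_1\bm x_2}\frac{\delta^2 f_n}{(\delta\mu)^2}$; the paper obtains these in one stroke from the commutation relations $T_n\nabla_{\bm x_1}\varphi=\nabla_{\bm x_1}T_n\varphi$ and $T_n^{\otimes2}\nabla^2_{\bm x_1\bm x_2}\psi=\nabla^2_{\bm x_1\bm x_2}T_n^{\otimes2}\psi$ together with the polynomial-growth hypotheses on the derivatives of $\Phi$ evaluated at $T_n^*\mu$. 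Your chain-rule route ("finite linear combinations of products of $\partial f_n$ and $g^{(i)}$") reaches the same conclusion but forces you to control $\partial_{z_i}f_n$ uniformly in $n$ and $\mu$ by hand, which is precisely what the commutation identity packages for you; making that identity explicit is the one step your plan should not leave implicit.
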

The proof is similar to that of \cite[Section 4]{cox2024controlled}. {  Following their construction procedure, fix $n \in \N$ and cover the ball $B_n := \{\bm x \in \R^d: |\bm x| \leq n\} $ by finitely many open sets of diameter at most $1/n$, denoted by $U^n_i$, with $i = 1,...,N_n$. Set $U^n_0 =\R^d\setminus B_n$ to get an open cover in $\R^d$. Then, fix points ${\bm x}^n_i$ in $\overline {U^n_i}$ with the minimal norm for $i=0,1,..,N_n$. Next, define a family of functions $\{\psi^n_ i\}$ as a partition of unity subordinate to $\{U^n_i\}$. (That is, each $\psi^n_i$ is a continuous function supported on $U^n_i$, such that $\sum_{i=0}^{N_n}\psi^n_i(\bm x) = 1$ for all $\bm x \in \R^d$). Finally, define the operator $T_n:C_b(\R^d)\to C_b(\R^d)$ such that for any function $\varphi$ on $\R^d$,
$$
T_n\varphi(x)=\sum_{i=0}^{N_n}\varphi({\bm x}^n_i)\psi^n_i(\bm x).
$$
Its adjoint operator $T_n^*:\PP_p(\R^d)\to \PP_p(\R^d)$ and the tensor square operator $T_n^{\otimes 2}:C_b(\R^{2d})\to C_b(\R^{2d})$ are defined accordingly.} 
Now, define $f_n(\mu,\bm y):=\Phi(T_n^*\mu,\bm y)$, then $f_n$ are $C^{2,2}$ cylindrical functions  and the key identities to be established are 
\begin{equation*}
    \frac{\delta f_n}{\delta \mu}(\mu,\bm y,\bm x_1)=T_n\Big(\frac{\delta \Phi}{\delta \mu}(T_n^*\mu,\bm y,\cdot)\Big)(\bm x_1),\quad\frac{\delta^2 f_n}{(\delta \mu)^2}(\mu,\bm y,\bm x_1,\bm x_2)=T_n^{\otimes2}\frac{\delta^2 \Phi}{(\delta\mu)^2}(T_n^*\mu, \bm y, \cdot,\cdot)(\bm x_1,\bm x_2),
\end{equation*}
for all $\bm x_1,\bm x_2,\bm y\in\R^d$ and $\mu\in\PP_p(\R^d)$ (see \cite[Lemma 4.6, Lemma 4.12]{cox2024controlled}).
To this end, note that the construction of $T_n$ ensures
\begin{equation*}
    T_n\nabla_{\bm x_1}\varphi(\bm x_1)=\nabla_{\bm x_1}T_n\varphi(\bm x_1),\quad T_n^{\otimes 2}\nabla^2_{\bm x_1\bm x_2}\psi(\bm x_1,\bm x_2)=\nabla^2_{\bm x_1\bm x_2}T_n\psi(\bm x_1,\bm x_2),
\end{equation*}
for all $\varphi\in C^1_b(\R^d)$ and $\psi\in C^2_b(\R^{2d}).$ Thus repeating the proof of \cite[Theorem 4.4, 
Theorem 4.10]{cox2024controlled} finishes the proof of the proposition.


\subsection{Final step} Define $\tau_k:=\inf\{t\in[0,T]:|\bm Y_t|> k\}$, then by Step 1,  It\^o's formula \eqref{mainito} holds for the truncated process $(\mu^{(\tau_n)},\bm Y^{(\tau_n)}):=(\mu_{t\wedge\tau_n},\bm Y_{t\wedge\tau_n})_{t\in[0,T]}$ with the sequences of cylindrical functions $\{f_n\}_{n\in\N^+}$. By Step 2, the localization argument shows that It\^o's formula \eqref{mainito} holds for the sequences of cylindrical functions $\{f_n\}_{n\in\N^+}$. 

In order to pass the limit from $f_n$ to $\Phi$, let us consider the first term in the right hand side of \eqref{mainito} and the remaining terms follow similarly. For the first term, we need to prove that there exists a subsequence $\{n_k\}_{k\in\N^+}$ such that
\begin{equation}\label{conver:condexp}
    \E\bigg[\int_{0+}^t\nabla_{\bm x_1}\frac{\delta f_{n_k}}{\delta\mu}\big(\mu_{s-},\bm Y_{s-},\bm X'_{s-}\big)\cdot\d (\bm X')^c_s\Big|\F\bigg]\to\E\bigg[\int_{0+}^t\nabla_{\bm x_1}\frac{\delta \Phi}{\delta\mu}\big(\mu_{s-},\bm Y_{s-},\bm X'_{s-}\big)\cdot\d (\bm X')^c_s\Big|\F\bigg],
\end{equation}
as $k\to\infty.$ To this end, we first prove that 
\begin{equation}\label{conver:stoint}
\int_{0+}^t\nabla_{\bm x_1}\frac{\delta f_n}{\delta\mu}\big(\mu_{s-},\bm Y_{s-},\bm X'_{s-}\big)\cdot\d (\bm X')^c_s\overset{L^1}{\to}\int_{0+}^t\nabla_{\bm x_1}\frac{\delta \Phi}{\delta\mu}\big(\mu_{s-},\bm Y_{s-},\bm X'_{s-}\big)\cdot\d (\bm X')^c_s,
\end{equation}
as $n\to\infty$. In fact, by Proposition \ref{dense_cylindrical}, we have
\begin{equation*}
\bigg||\nabla_{\bm x_1}\frac{\delta f_n}{\delta\mu}\big(\mu_{s-},\bm Y_{s-},\bm X'_{s-}\big)\bigg|\leq c\big(1+|\bm X_{s-}|^{p-1}+|\bm Y_{s-}|^{p-1}\big),
\end{equation*}
where the $\S_{p'}$ norm of the right hand side is finite with $p'=\frac{p}{p-1}$.
Now \eqref{conver:stoint} follows from $\|\bm X\|_{\H^p}<\infty$ and Lemma \ref{lem:CDCT}.

Next, since
\begin{equation*}
    \begin{aligned}
    \E\bigg[\Big|& \E\Big[\int_{0+}^t\nabla_{\bm x_1}\frac{\delta f_{n}}{\delta\mu}\big(\mu_{s-},\bm Y_{s-},\bm X'_{s-}\big)\cdot\d (\bm X')^c_s\Big|\F\Big]- \E\Big[\int_{0+}^t\nabla_{\bm x_1}\frac{\delta \Phi}{\delta\mu}\big(\mu_{s-},\bm Y_{s-},\bm X'_{s-}\big)\cdot\d (\bm X')^c_s\Big|\F\Big]\Big|\bigg]
    \\
    &\leq \E\bigg[\Big| \int_{0+}^t\nabla_{\bm x_1}\frac{\delta f_{n}}{\delta\mu}\big(\mu_{s-},\bm Y_{s-},\bm X'_{s-}\big)\cdot\d (\bm X')^c_s- \int_{0+}^t\nabla_{\bm x_1}\frac{\delta \Phi}{\delta\mu}\big(\mu_{s-},\bm Y_{s-},\bm X'_{s-}\big)\cdot\d (\bm X')^c_s\Big|\bigg]\to 0,
    \end{aligned}
\end{equation*}
and $L_1$ convergences implies convergence in subsequence, \eqref{conver:condexp} holds. The remaining terms follow similarly due to the uniform bound \eqref{cylin_boun}, and thus the  proof of  Theorem \ref{mainthm} is complete.

\section{Proof of Results in Section \ref{section:condcopy}}\label{sec:proofcondcopy}
\subsection{Proof of Theorem \ref{thm:condindepcopy}}\label{sec:conconstruct}
To prove the theorem, we first provide a constructive proof for  the existence of  conditionally independent copy claimed in Theorem \ref{thm:condindepcopy}. 
For ease of exposition,  we will show how to enlarge the probability space with two independent copies, and the case with a finite number of copies can be easily adapted. 

Recall the discussions in Section \ref{section:condcopy}, and consider the probability space $\overline\Omega$, the $\sigma$-algebra $\overline\F$ defined by
\begin{equation}\label{sigmafextension}
\overline\Omega=\Omega^3=\{(\omega_0,\omega_1,\omega_2)|\omega_0,\omega_1,\omega_2\in\Omega\}, \qquad \overline\F=\sigma\{A_0\times A_1\times A_2:A_0,A_1,A_2\in\F\},
\end{equation}
and a probability measure $\overline \P$  defined as follows: for $A_0,A_1,A_2\in\F$, 
\begin{equation}\label{probextension}
    \overline \P\big(A_0\times A_1\times A_2\big):=\E\Big[\mathbbm{1}_{A_0}\P(A_1|\G)\P(A_2|\G)\Big].
\end{equation}
Since the set $\{A_0\times A_1\times A_2:A_0,A_1,A_2\in\F\}$ is a $\pi$-system,  the measure on $(\overline \Omega,\overline \F,\overline\P)$ is well defined. 

Next, take $\widetilde\F,\widetilde\G$ for the natural extension in this enlarged probability space as
\begin{equation}\label{extenfg}
    \widetilde\F:=\{A\times\Omega\times\Omega:A\in\F\}, \quad \widetilde\G:=\{A\times\Omega\times\Omega:A\in\G\},
\end{equation}
and define $\F'$, $\F''$ as
\begin{equation}\label{extenff'}
\F':=\{\Omega\times A\times\Omega:A\in\F\}, \quad\F'':=\{\Omega\times\Omega\times A:A\in\F\}.
\end{equation}

In this framework, we have the following proposition.

\begin{proposition}\label{prop:condcopy}
    Consider $(\overline \Omega,\overline \F, \overline \P)$ defined by \eqref{sigmafextension} and \eqref{probextension} and the sub-algebras $\widetilde\F,\widetilde\G$, $\F'$, and  $\F''$ defined by \eqref{extenfg} and \eqref{extenff'}. Suppose $A,B\in\F$, and let $\widetilde A=A\times\Omega\times\Omega$, $A'=\Omega\times A\times \Omega$, $A''=\Omega\times \Omega\times A$, $B'=\Omega\times B\times \Omega$, and $C''=\Omega\times \Omega\times C$. Then
    \begin{enumerate}
        \item\label{conditionexten} For $\overline\omega=(\omega_0,\omega_1,\omega_2)\in\overline\Omega$,
        \begin{equation*}
            \P(A|\mathcal G)(\omega_0)=\overline\P(\widetilde A|\widetilde \G)(\omega_0,\omega_1,\omega_2),\text { a.s. under }\overline\P.
        \end{equation*}
        
        \item\label{conditionident} Given $\widetilde\G$,  $\widetilde A,A',$ and $A''$ have  the same conditional law. That is, 
        \begin{equation*}
            \overline \P(\widetilde A|\widetilde\G)=\overline \P(A'|\widetilde\G)=\overline \P(A''|\widetilde\G),\text { a.s. under }\overline\P.
        \end{equation*}
        Moreover, this conditional probability $\overline \P(\widetilde A|\widetilde\G)$ coincides with the conditional probability of $A'$ and $A''$, given $\widetilde\F$. That is,  
        \begin{equation*}
            \overline \P(\widetilde A|\widetilde\G)=\overline \P(A'|\widetilde\F)=\overline \P(A''|\widetilde\F),\text { a.s. under }\overline\P.
        \end{equation*}
        \item\label{conditionindep} Given $\widetilde \G$,  $\widetilde \F,\F',$ and $\F''$ are conditionally independent. That is, 
        \begin{equation*}
            \overline\P\big(\widetilde A\cap B'\cap C''|\widetilde\G\big)=\overline\P\big(\widetilde A|\widetilde\G\big)\overline\P\big(B'|\widetilde\G\big)\overline\P\big(C''|\widetilde\G\big),\text { a.s. under }\overline\P.
        \end{equation*}
        Moreover, given $\widetilde \F$, then $\F'$ and $\F''$ are conditionally independent.   That is,
        \begin{equation*}
            \overline\P\big(B'\cap C''|\widetilde\F\big)=\overline\P\big(B'|\widetilde\F\big)\overline\P\big(C''|\widetilde\F\big),\text { a.s. under }\overline\P.
        \end{equation*}
    \end{enumerate}
\end{proposition}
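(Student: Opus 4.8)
The plan is to verify each asserted equality directly from the defining property of conditional expectation: a function $H$ on $(\overline\Omega,\overline\F,\overline\P)$ is a version of $\overline\P(E\mid\mathcal H)$ precisely when $H$ is $\mathcal H$-measurable and $\overline\E[\mathbbm 1_E\mathbbm 1_K]=\overline\E[H\mathbbm 1_K]$ for every $K$ in a $\pi$-system that generates $\mathcal H$ and contains $\overline\Omega$. Since $\widetilde\G=\{G\times\Omega\times\Omega:G\in\G\}$ and $\widetilde\F=\{F\times\Omega\times\Omega:F\in\F\}$ are themselves such $\pi$-systems, in each case it suffices to test against sets of these two forms, and measurability will be immediate because $\widetilde\G=\pi_0^{-1}(\G)$ and $\widetilde\F=\pi_0^{-1}(\F)$, where $\pi_0(\omega_0,\omega_1,\omega_2)=\omega_0$, while every candidate version below is of the form $f\circ\pi_0$ with $f$ being $\G$- or $\F$-measurable. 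I will use two elementary facts throughout: the first marginal of $\overline\P$ is $\P$, since $\overline\P(A_0\times\Omega\times\Omega)=\E[\mathbbm 1_{A_0}]=\P(A_0)$, hence $\overline\E[h\circ\pi_0]=\E[h]$ for $\F$-measurable $h$; and, for $G\in\G$ and $A,B,C\in\F$, $\E[\mathbbm 1_G\mathbbm 1_A\P(B|\G)\P(C|\G)]=\E[\mathbbm 1_G\P(A|\G)\P(B|\G)\P(C|\G)]$, obtained by pulling the $\G$-measurable factor $\mathbbm 1_G\P(B|\G)\P(C|\G)$ out of $\E[\,\cdot\mid\G]$.

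For \eqref{conditionexten}, I claim the $\widetilde\G$-measurable function $\overline\omega\mapsto\P(A|\G)(\omega_0)$ is a version of $\overline\P(\widetilde A\mid\widetilde\G)$: testing against $G\times\Omega\times\Omega$ gives, on the one side, $\overline\P((A\cap G)\times\Omega\times\Omega)=\P(A\cap G)$, and on the other, $\E[\mathbbm 1_G\P(A|\G)]=\P(A\cap G)$. For \eqref{conditionident}, the same function is also a version of $\overline\P(A'\mid\widetilde\G)$ and of $\overline\P(A''\mid\widetilde\G)$, since $A'\cap(G\times\Omega\times\Omega)=G\times A\times\Omega$ and $\overline\P(G\times A\times\Omega)=\E[\mathbbm 1_G\P(A|\G)]$ (symmetrically for $A''$); and it is a version of $\overline\P(A'\mid\widetilde\F)$ and $\overline\P(A''\mid\widetilde\F)$ by testing against $F\times\Omega\times\Omega$ with $F\in\F$, using $\overline\P(F\times A\times\Omega)=\E[\mathbbm 1_F\P(A|\G)]$ together with the fact that $\P(A|\G)\circ\pi_0$ is $\widetilde\F$-measurable because $\G\subset\F$. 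This establishes all equalities in \eqref{conditionident} with one explicit common version.

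For \eqref{conditionindep}, I combine the previous identities with the second elementary fact. For conditional independence given $\widetilde\G$, I claim $\overline\omega\mapsto\P(A|\G)(\omega_0)\,\P(B|\G)(\omega_0)\,\P(C|\G)(\omega_0)$ is a version of $\overline\P(\widetilde A\cap B'\cap C''\mid\widetilde\G)$: since $\widetilde A\cap B'\cap C''=A\times B\times C$, testing against $G\times\Omega\times\Omega$ yields $\overline\P((A\cap G)\times B\times C)=\E[\mathbbm 1_{A\cap G}\P(B|\G)\P(C|\G)]=\E[\mathbbm 1_G\P(A|\G)\P(B|\G)\P(C|\G)]$, which is exactly $\overline\E[\mathbbm 1_{G\times\Omega\times\Omega}\,\P(A|\G)(\omega_0)\P(B|\G)(\omega_0)\P(C|\G)(\omega_0)]$; in view of \eqref{conditionexten}--\eqref{conditionident} the claimed factorization follows. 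The conditional independence of $\F'$ and $\F''$ given $\widetilde\F$ is obtained identically: $B'\cap C''=\Omega\times B\times C$, and testing against $F\times\Omega\times\Omega$ gives $\overline\P(F\times B\times C)=\E[\mathbbm 1_F\P(B|\G)\P(C|\G)]$, matching $\overline\E[\mathbbm 1_{F\times\Omega\times\Omega}\,\P(B|\G)(\omega_0)\P(C|\G)(\omega_0)]$, while $\P(B|\G)\circ\pi_0$ and $\P(C|\G)\circ\pi_0$ are $\widetilde\F$-measurable.

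There is no genuine obstacle here; the argument is bookkeeping around the defining property of conditional expectation. The only points requiring care are (i) keeping the measurability check (handled uniformly by $\widetilde\G=\pi_0^{-1}(\G)$, $\widetilde\F=\pi_0^{-1}(\F)$) separate from the integral identity, which need only be checked on the generating $\pi$-systems before invoking uniqueness of conditional expectation; and (ii) recording that, by working throughout with the pulled-back versions $\P(\cdot|\G)\circ\pi_0$, the various conditional probabilities appearing in \eqref{conditionident}--\eqref{conditionindep} are literally the same function, so that the a.s.\ equalities are unambiguous.
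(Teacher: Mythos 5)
Your proposal is correct and follows essentially the same route as the paper's proof: identify the explicit candidate version $\P(\cdot|\G)\circ\pi_0$, test it against the generating $\pi$-systems $\{G\times\Omega\times\Omega\}$ and $\{F\times\Omega\times\Omega\}$ using the defining formula for $\overline\P$ on rectangles, and use the tower property to pull the $\G$-measurable factors inside for part \eqref{conditionindep}. Your explicit bookkeeping of the measurability step via $\widetilde\G=\pi_0^{-1}(\G)$, $\widetilde\F=\pi_0^{-1}(\F)$ is slightly more careful than the paper's, but the argument is the same.
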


\begin{proof} (of Proposition \ref{prop:condcopy}).
To prove \eqref{conditionexten}, note that $\P[A|\G]$ is $\G$ measurable in $(\Omega,\F,\FF,\P)$, then $\P[A|\G](\omega_1)$ as a random variable in $(\overline\Omega,\overline\F,\overline\FF,\overline\P)$ is $\widetilde\G$-measurable. For any $\widetilde G\in\widetilde \G$, there exists a $G\in\G$ such that $\widetilde G=G\times\Omega\times\Omega\in\widetilde\G$. By the definition of $\overline{\P}$ in (\eqref{probextension}) and the property of conditional expectation, 
\begin{equation*}
\overline\P\big[\widetilde A\cap\widetilde G\big]=\overline\P\big[(A\cap G)\times\Omega\times\Omega\big]=\P[A\cap G]=\E\Big[\P[A|\G]\mathbbm1_{G}\Big]=\int_\Omega\P[A|\G](\omega)\mathbbm1_{G}(\omega)\P(\d \omega).
\end{equation*}
Again by \eqref{probextension}, $\overline\P(\d \omega_1\times\Omega\times\Omega)=\P(\d \omega_1)$, therefore, 
\begin{equation}\label{eqtem7}
    \begin{aligned}
    \overline\P\big[\widetilde A\cap\widetilde G\big]&=\int_{\Omega^3}\P[A|\G](\omega_0)\mathbbm1_{G}(\omega_0)\overline\P(\d \omega_0\d\omega_1\d\omega_2)
    \\&=\int_{\Omega^3}\P[A|\G](\omega_0)\mathbbm1_{\widetilde G}(\omega_0,\omega_1,\omega_2)\overline\P(\d \omega_0\d\omega_1\d\omega_2)=\overline\E\Big[\P[A|\G]\mathbbm1_{\widetilde G}\Big],
    \end{aligned}
\end{equation}
which leads to $\P[A|\G](\omega_0)=\overline\P[\widetilde A|\widetilde\G](\omega_0,\omega_1,\omega_2)$ a.s. under $\overline P$.

To prove \eqref{conditionident}, for any $\widetilde G\in\widetilde \G$, there exists a $G\in\G$ such that $\widetilde G=G\times\Omega\times\Omega\in\widetilde\G$. Then,
\begin{equation*}
    \overline\P\big[A'\cap\widetilde G\big]=\overline\P\big[G\times A\times \Omega\big]=\E\big[\mathbbm1_G\P(A|\G)\big]=\overline\E\Big[\P[A|\G]\mathbbm1_{\widetilde G}\Big]=\overline\E\Big[\overline\P[\widetilde A|\widetilde \G]\mathbbm1_{\widetilde G}\Big],
\end{equation*}
where the second equality holds because of \eqref{probextension}, the third equality holds due to the same calculation as \eqref{eqtem7} and the last equality holds by \eqref{conditionexten}. This leads to $\overline \P(A'|\widetilde\G)=\overline \P(\widetilde A|\widetilde\G)$ and similarly, we have $\overline \P(A''|\widetilde\G)=\overline \P(\widetilde A|\widetilde\G)$. 

As for the second statement of \eqref{conditionident}, note that $\overline\P(A'|\widetilde \G)$ is $\widetilde \G$ measurable, hence $\widetilde \F$ measurable. For any $\widetilde F\in\widetilde\F$, there exists an $F\in\F$ such that $\widetilde F=F\times\Omega\times\Omega\in\widetilde\F$, and
\begin{equation*}
    \begin{aligned}
    \overline\P(A'\cap\widetilde F)=\overline\P\big[F\times A\times \Omega\big]=\E\big[\mathbbm1_F\P(A|\G)\big]=\overline\E\Big[\P[A|\G]\mathbbm1_{\widetilde F}\Big]=\overline\E\Big[\overline\P[\widetilde A|\widetilde \G]\mathbbm1_{\widetilde F}\Big],
    \end{aligned}
\end{equation*}
where the second equality follows from \eqref{probextension}, the third equality holds due to the same calculation as \eqref{eqtem7}, and the last equation is by \eqref{conditionexten}. This leads to $\overline \P(A'|\widetilde\F)=\overline \P(\widetilde A|\widetilde\G)$ and similarly,  $\overline \P(A''|\widetilde\F)=\overline \P(\widetilde A|\widetilde\G)$. 

To prove \eqref{conditionindep}, for any $\widetilde G\in\widetilde \G$, there exists a $G\in\G$ such that $\widetilde G=G\times\Omega\times\Omega\in\widetilde\G$, and  
\begin{equation*}
    \begin{aligned}
    \overline{\P}(\widetilde A\cap B'\cap C''\cap\widetilde G)&=\overline{\P}((A\cap G)\times B\times C)=\E\Big[\mathbbm1_{A\cap G}\P(B|\G)\P(C|\G)\Big]
    \\
    &=\E\Big[\E[\mathbbm1_{A\cap G}|\G]\P(B|\G)\P(C|\G)\Big]=\E\Big[\mathbbm1_{G}\P(A|\G)\P(B|\G)\P(C|\G)\Big].
    \end{aligned}
\end{equation*}
By the same calculation as in  \eqref{eqtem7} and noting that $\P(A|\G)=\overline\P(\widetilde A|\widetilde \G)$, $\P(B|\G)=\overline\P(B'|\widetilde \G)$, and $\P(C|\G)=\overline\P(C''|\widetilde \G)$, we have
    \begin{equation*}
    \begin{aligned}
    \overline{\P}(\widetilde A\cap B'\cap C''\cap\widetilde G)&=\overline\E\Big[\mathbbm1_{\widetilde G}\P(A|\G)\P(B|\G)\P(C|\G)\Big]=\overline\E\Big[\mathbbm1_{\widetilde G}\overline\P(\widetilde A|\widetilde \G)\overline\P(B'|\widetilde \G)\overline\P(C''|\widetilde \G)\Big].
    \end{aligned}
\end{equation*}
Therefore, 
\begin{equation*}
    \overline\P\big(\widetilde A\cap B'\cap C''|\widetilde\G\big)=\overline\P(\widetilde A|\widetilde \G)\overline\P(B'|\widetilde \G)\overline\P(C''|\widetilde \G),\text{ a.s. under }\overline \P.
\end{equation*}

As for the second statement of  \eqref{conditionindep}, for any $\widetilde F\in\widetilde\F$, there exists an $F\in\F$ such that $\widetilde F=F\times\Omega\times\Omega\in\widetilde\F$, and by \eqref{probextension},
\begin{equation*}   
    \begin{aligned}
    \overline\P(B'\cap C''\cap \widetilde F)&=\overline\P(F\times B\times C)=\E\big[\mathbbm1_{ F}\P(B|\G)\P(C|\G)\big]
    \\&=\overline\E\big[\mathbbm1_{\widetilde F}\P(B|\G)\P(C|\G)\big]=\overline\E\big[\mathbbm1_{\widetilde F}\overline\P(B'|\widetilde\F)\overline\P(C''|\overline\F)\big],
    \end{aligned}
\end{equation*}
where the  the third equality follows from the  same calculation as \eqref{eqtem7} and the last equation holds by \eqref{conditionexten} and \eqref{conditionident}. Therefore,  
\begin{equation*}
    \overline\P\big(B'\cap C''|\widetilde\F\big)=\overline\P\big(B'|\widetilde\F\big)\overline\P\big(C''|\widetilde\F\big),\text { a.s. under }\overline\P.
\end{equation*}
\end{proof}

\begin{proof}  (of Theorem \ref{thm:condindepcopy}). For any random variable $R$ { taking values in a standard Borel space $(E,\mathcal{E})$}, recall that the natural extension $\widetilde R$ and the two conditionally independent copies $R'$ and $R''$ of $R$ are defined by
\begin{equation*}   
    \widetilde R(\omega_0,\omega_1,\omega_2)=R(\omega_0),\; R'(\omega_0,\omega_1,\omega_2)=R(\omega_1),\;R''(\omega_0,\omega_1,\omega_2)=R(\omega_2).
\end{equation*}
Then 
\begin{equation*}
\text{Law}\big(R\big|\G\big)(\omega_0)=\text{Law}\big(\widetilde R\big|\widetilde\G\big)(\overline\omega)=\text{Law}\big(R'\big|{\widetilde\G}\big)(\overline\omega)=\text{Law}\big(R''\big|{\widetilde\F}\big)(\overline\omega)\text{ a.s. under }\overline\P,
\end{equation*}
as a direct consequence of $\eqref{conditionident}$ of Proposition \ref{prop:condcopy}, with $\overline\omega=(\omega_0,\omega_1,\omega_2)$, { since these laws can be uniquely determined from $\P(R\in A|\G)$,$\overline\P(\widetilde R\in A|\widetilde\G)$, $\overline\P(R'\in A|\widetilde\G)$, and $\overline\P(R''\in A|\widetilde\F)$ for a countable collection of sets $A$.
} Meanwhile, \eqref{conditionindep} implies that  
$\widetilde R$, $R'$, and $R''$ are conditionally independent given $\widetilde\G$ or $\widetilde\F$.
   
\end{proof}

{ 
\begin{remark} Our construction   
is inspired by the proof of Yamada-Watanabe theorem (see for instance \cite{karatzas-shreve,kurtz2007yamada}) and the theory of weak and strong solutions of stochastic differential equations (see for instance \cite{jacod1981weak,kurtz2014weak}). 
Note that in these existing approaches, the common information $\G$ is modeled as part of the stochastic input. Moreover, in the context of weak and strong solutions of SDEs, one often uses the existence of regular conditional distributions to construct an extended probability space by embedding different weak solutions into a common space. To illustrate this, take the example of  $(X^{(1)},W^{(1)})$ and $(X^{(2)},W^{(2)})$,  let $\nu(\d w)$ denote the common law of a Brownian motion, and define the regular conditional law $\eta^{(i)}(\d x;w):=\text{Law}(X^{(i)}|W^{(i)}=w)$ for $i=1,2$. In such a setting, an extended probability measure is often defined (informally) by $\P(\d x^{(1)}\d x^{(2)}\d w):=\eta^{(1)}(\d x^{(1)};w)\cdot \eta^{(2)}(\d x^{(2)};w)\nu(\d w).$ 

Motivated by these constructions, we consider a more general setting in which the common information $\G$ may be abstract and regular conditional distributions may not be available. We define conditionally independent copies of some random variable $X$ directly by specifying the measure on the extended space supporting $X,X'$ such that
$$
\P\big((X,X')\in A\times B\big):=\E[\mathbbm 1_{X\in A}\cdot \P(X\in B|\G)],
$$
which remains consistent with the spirit of the classical constructions while avoiding the explicit use of regular conditional distributions.
 
\end{remark} }

    

\noindent\textbf{Proof of Corollary \ref{cor:condindepcopy}.} It suffices to show that  the extension of the sub-filtration $\widetilde\GG\subset\widetilde\FF$ satisfies \begin{equation*}
    {\widetilde\F}_t\indep{\widetilde\G}_T\big|{\widetilde\G}_t,\text{ for all }0\leq t\leq T.
\end{equation*}
For any $\widetilde F_t\in\widetilde\F_t$, $\widetilde G_t\in{\widetilde\G}_t$ and $\widetilde G_T\in{\widetilde\G}_T$,  there exist $F_t\in\F_t$, $G_t\in\G_t$, and $G_T\in\G_T$ such that $\widetilde F_t=F_t\times\Omega\times\Omega$,  $\widetilde G_t=G_t\times\Omega\times\Omega$ and $\widetilde G_T=G_T\times\Omega\times\Omega$. Then,
\begin{equation*}
    \begin{aligned}
    \overline{\P}(\widetilde F_t\cap\widetilde G_t\cap\widetilde G_T)&=\overline \P((F_t\cap G_t\cap G_T)\times\Omega\times\Omega)=\P(F_t\cap G_t\cap G_T)
    \\
    &=\E\big[\P(F_t\cap G_T|\G_t)\mathbbm1_{G_t}\big]=\E\big[\P(F_t|\G_t)\P(G_T|\G_t)\mathbbm1_{G_t}\big]
    \\
    &=\overline\E\big[\P(F_t|\G_t)\P(G_T|\G_t)\mathbbm1_{\widetilde G_t}\big]=\overline\E\big[\overline\P(\widetilde F_t|\widetilde \G_t)\overline\P(\widetilde G_T|\widetilde \G_t)\mathbbm1_{\widetilde G_t}\big],
    \end{aligned}
\end{equation*}
where the fourth equality holds by $\F_t\indep\G_T|\G_t$, the fifth equality follows from the same calculation as \eqref{eqtem7}, and the last equality is by \eqref{conditionident} in Proposition \ref{prop:condcopy}. Therefore,
\begin{equation*}
\overline \P(\widetilde F_t\cap\widetilde G_T|\widetilde\G_t)=\overline \P(\widetilde F_t|\widetilde\G_t)\overline \P(\widetilde G_T|\widetilde\G_t).
\end{equation*}
\qed

   

\subsection{Proof of Proposition \ref{prop:condhypo}}
For \eqref{cpresult0}, let us assume that (\cite[Theorem 1, Chapter 4]{protter2005stochastic}) 
\begin{equation*}
    X_t=M_t+V_t,
\end{equation*}
where $M_t$ is an $\FF$-local martingale and $V_t$ is a finite variation process. To prove that $X$ is an $\overline\FF$-semimartingale, it suffices to show that $M_t$ is an $\overline\FF$-local martingale. Let $\{\tau_k\}_{k\in\N^+}$ be a sequence of $\FF$-stopping times such that $M^{\tau_k}:=(M_{t\wedge\tau_k})_{t\in[0,T]}$ is an $\FF$-martingale for all $k\in\N^+$, and $\tau_k\to\infty$ as $k\to\infty$. We claim that
\begin{equation*}
    \overline\E[M_{t\wedge\tau_k}|\overline \F_s]=M_{s\wedge\tau_k}.
\end{equation*}
Recall that 
\begin{equation*}
    \overline\F_s=\sigma\{A_0\times A_1\times A_2:A_0,A_1,A_2\in\F_s\},
\end{equation*}
it suffices to show that
\begin{equation}\label{eqtmp11}   
    \overline\E\big[M_{t\wedge\tau_k}\mathbbm1_{A_0\times A_1\times A_2}\big]=\overline\E\big[M_{s\wedge\tau_k}\mathbbm1_{A_0\times A_1\times A_2}\big]
\end{equation}
for all $A_0,A_1,A_2\in\F_s$. Note that $M_{t\wedge\tau_k}\mathbbm1_{A_0\times \Omega\times \Omega}$, $\mathbbm1_{\Omega\times A_1\times\Omega}$ and  $\mathbbm1_{\Omega\times\Omega\times A_2}$  are conditionally independent given $\G_T$ by \eqref{conditionindep} in Proposition \ref{prop:condcopy}, we have
\begin{equation}\label{eqtmp8}
    \begin{aligned}
&\overline\E\big[M_{t\wedge\tau_k}\mathbbm1_{A_0\times A_1\times A_2}\big]=\overline\E\Big[\overline\E\big[M_{t\wedge\tau_k}\mathbbm1_{A_0\times \Omega\times \Omega}\big|\G_T\big]\overline\E\big[\mathbbm1_{\Omega\times A_1\times \Omega}\big|\G_T\big]\overline\E\big[\mathbbm1_{\Omega\times  \Omega\times A_2}\big|\G_T\big]\Big].
\end{aligned}
\end{equation}
By \eqref{conditionident} in Proposition \ref{prop:condcopy} and $\F_s\indep\G_T|\G_s$,   
\begin{equation}\label{eqtmp9}
    \overline\E\big[\mathbbm1_{\Omega\times A_1\times \Omega}\big|\G_T\big]=\overline\P(A_1|\G_T)=\overline\P(A_1|\G_s),\text{ and similarly }\overline\E\big[\mathbbm1_{\Omega\times \Omega\times A_2}\big|\G_T\big]=\overline\P(A_2|\G_s).
\end{equation}
Therefore \eqref{eqtmp8} becomes 
\begin{equation}\label{eqtmp10}
    \begin{aligned}
\overline\E\big[M_{t\wedge\tau_k}\mathbbm1_{A_0\times A_1\times A_2}\big]&=\overline\E\Big[\overline\E\big[M_{t\wedge\tau_k}\mathbbm1_{A_0\times \Omega\times \Omega}\big|\G_T\big]\overline\P\big( A_1|\G_s\big)\overline\P\big( A_2|\G_s\big)\Big]
\\
&=\overline\E\Big[\overline\E\big[M_{t\wedge\tau_k}\mathbbm1_{A_0\times\Omega\times\Omega}\big|\G_s\big]\overline\P\big( A_1|\G_s\big)\overline\P\big( A_2|\G_s\big)\Big],
\end{aligned}
\end{equation}
where the second equality holds by the tower rule. Since $M^{\tau_k}$ is an 
$\FF$-martingale, 
\begin{equation*}
    \overline\E\big[M_{t\wedge\tau_k}\mathbbm1_{A_0\times\Omega\times\Omega}\big|\G_s\big]=\overline\E\big[\mathbbm1_{A_0\times\Omega\times\Omega}\E[M_{t\wedge\tau_k}|\F_s]\big|\G_s\big]=\overline\E\big[M_{s\wedge\tau_k}\mathbbm1_{A_0\times\Omega\times\Omega}\big|\G_s\big].
\end{equation*}
Therefore, 
\begin{equation*}
    \begin{aligned}
    \overline\E\big[M_{t\wedge\tau_k}\mathbbm1_{A_0\times A_1\times A_2}\big]&=\overline\E\Big[\overline\E\big[M_{s\wedge\tau_k}\mathbbm1_{A_0\times\Omega\times\Omega}\big|\G_s\big]\overline\P\big( A_1|\G_s\big)\overline\P\big( A_2|\G_s\big)\Big].
    \end{aligned}
\end{equation*}
Repeating the calculation as in \eqref{eqtmp8}, \eqref{eqtmp9}, and \eqref{eqtmp10} yields
\begin{equation*}
    \begin{aligned}
    \overline\E\big[M_{s\wedge\tau_k}\mathbbm1_{A_0\times A_1\times A_2}\big]&=\overline\E\Big[\overline\E\big[M_{s\wedge\tau_k}\mathbbm1_{A_0\times\Omega\times\Omega}\big|\G_s\big]\overline\P\big( A_1|\G_s\big)\overline\P\big( A_2|\G_s\big)\Big],
    \end{aligned}
\end{equation*}
which leads to \eqref{eqtmp11}.

To show that $X'$ is an  $\overline\FF$-semimartingale, by the construction of the extended probability space, we can define $M',V'$, and $\tau_k'$ to be respectively the  conditionally independent copy of $M,V$, and $\tau_k$. Then it suffices to show that  
\begin{equation*}   
    \E\big[M'_{t\wedge\tau'_k}\mathbbm1_{A_0\times A_1\times A_2}\big]=\E\big[M'_{s\wedge\tau'_k}\mathbbm1_{A_0\times A_1\times A_2}\big].
\end{equation*}
Note that
\begin{equation*}
    \begin{aligned}
&\overline\E\big[M'_{t\wedge\tau_k'}\mathbbm1_{A_0\times A_1\times A_2}\big]=\overline\E\Big[\overline\E\big[M'_{t\wedge\tau'_k}\mathbbm1_{\Omega\times A_2\times \Omega}\big|\G_T\big]\overline\P(A_0|\G_s)\overline\P(A_2|\G_s)\Big]
\\
&=\overline\E\Big[\overline\E\big[M_{t\wedge\tau_k}\mathbbm1_{A_1\times \Omega\times \Omega}\big|\G_T\big]\overline\P(A_0|\G_s)\overline\P(A_2|\G_s)\Big]=\overline\E\Big[\overline\E\big[M_{t\wedge\tau_k}\mathbbm1_{A_1\times \Omega\times \Omega}\big|\G_s\big]\overline\P(A_0|\G_s)\overline\P(A_2|\G_s)\Big]
\\
&=\overline\E\Big[\overline\E\big[M_{s\wedge\tau_k}\mathbbm1_{A_1\times \Omega\times \Omega}\big|\G_s\big]\overline\P(A_0|\G_s)\overline\P(A_2|\G_s)\Big]=\overline\E\Big[\overline\E\big[M_{s\wedge\tau_k}\mathbbm1_{A_1\times \Omega\times \Omega}\big|\G_T\big]\overline\P(A_0|\G_s)\overline\P(A_2|\G_s)\Big]
\\
&=\overline\E\Big[\overline\E\big[M'_{s\wedge\tau'_k}\mathbbm1_{\Omega\times A_2\times \Omega}\big|\G_T\big]\overline\P(A_0|\G_s)\overline\P(A_2|\G_s)\Big]=\E\big[M'_{s\wedge\tau'_k}\mathbbm1_{A_0\times A_1\times A_2}\big],
\end{aligned}
\end{equation*}
where the first and last equalities mimic the argument in \eqref{eqtmp8}, \eqref{eqtmp9} and \eqref{eqtmp10}, the second and the sixth equalities hold from \eqref{conditionident} in Proposition \ref{prop:condcopy}, the third and the fifth equalities follow from the tower rule, and the fourth equality is the martingale property. This completes the proof of  \eqref{cpresult0}.

For \eqref{cpresult1}, it suffices to prove that 
\begin{equation*}
    \overline\E\bigg[\bigg(\sup_{\sigma_m}\sum_{i=1}^{k_m}\big|X^\G_{t_{i+1}^m}-X^\G_{t_{i}^m}\big|\bigg)^p\bigg]<\infty,
\end{equation*}
where the supreme is taken over all partition $\sigma_m:\{0=t_0^m\leq t_1^m\leq \cdots\leq t_{k_m}^m=T\}$  of $[0,T]$. Since
\begin{equation*}
    \sum_{i=1}^{k_m}\big|X^\G_{t_{i+1}^m}-X^\G_{t_{i}^m}\big|=\sum_{i=1}^{k_m}\Big|\overline\E\big[X_{t_{i+1}^m}-X_{t_{i}^m}\big|\G_T\big]\Big|\leq \overline\E\bigg[\sum_{i=1}^{k_m}\big|X_{t_{i+1}^m}-X_{t_{i}^m}\big|\Big|\G_T\bigg],
\end{equation*}
we have,
\begin{equation*}
    \begin{aligned}
    \overline\E&\bigg[\bigg(\sup_{\sigma_m}\sum_{i=1}^{k_m}\big|X^\G_{t_{i+1}^m}-X^\G_{t_{i}^m}\big|\bigg)^p\bigg]
    \\
    &=\overline\E\bigg[\sup_{\sigma_m}\bigg(\sum_{i=1}^{k_m}\big|X^\G_{t_{i+1}^m}-X^\G_{t_{i}^m}\big|\bigg)^p\bigg]
\leq\overline\E\bigg[\sup_{\sigma_m}\bigg(\overline\E\bigg[\sum_{i=1}^{k_m}\big|X_{t_{i+1}^m}-X_{t_{i}^m}\big|\Big|\G_T\bigg]\bigg)^p\bigg]
    \\
    &\leq\overline\E\bigg[\bigg(\E\bigg[\sup_{\sigma_m}\sum_{i=1}^{k_m}\big|X_{t_{i+1}^m}-X_{t_{i}^m}\big|\Big|\G_T\bigg]\bigg)^p\bigg]\leq\overline\E\bigg[\overline\E\bigg[\bigg(\sup_{\sigma_m}\sum_{i=1}^{k_m}\big|X_{t_{i+1}^m}-X_{t_{i}^m}\big|\bigg)^p\bigg|\G_T\bigg]\bigg]
    \\
    &= \overline\E\bigg[\bigg(\sup_{\sigma_m}\sum_{i=1}^{k_m}\big|X_{t_{i+1}^m}-X_{t_{i}^m}\big|\bigg)^p\bigg]=\overline\E\bigg[\bigg(\int_0^T\big|\d X_s\big|\bigg)^p\bigg]<\infty,
    \end{aligned}
\end{equation*}
which leads to \eqref{cpresult1}. 

For \eqref{cpresult2}, due to  \eqref{cpresult1} and the decomposability (see \cite[Theorem 1, Chapter 4]{protter2005stochastic}), it suffices to show that if 
$X$ is an $\FF$-local martingale, then $X^\G$ is a local martingale with respect to $\overline \FF$, $\FF$ and $\GG$, which thanks to \eqref{cpresult0}  can be further reduced to proving that $X^\G$ is a local martingale with respect to $\FF$ and $\GG$. Let $\tau_k$ be the stopping time $\tau_k:=\inf\{t\in[0,T]:[X,X]_t>k\}$ and consider the truncated process $X^{\tau_k}:=\{X_{t\wedge\tau_k}\}_{t\in[0,T]}$. By Burkholder-Davis-Gundy inequality, 
\begin{equation}\label{bddqv}
    \overline\E\big[(X^{\tau_k})^*_t\big]^2\leq\overline\E\big[\big((X^{\tau_k}\big)^*_t)^2\big]\leq C\overline\E\big[\big[X^{\tau_k},X^{\tau_k}\big]_t\big]\leq k,
\end{equation}
for some constant $C$ and $(X^{\tau_k})^*_t=\sup_{0\leq s\leq t}|X_{s\wedge\tau_k}|.$ Let $\{\widetilde \tau_\ell\}_{\ell\in\N^+}$ be the sequence of the $\FF$-stopping times such that $\tau_\ell\to T$ and $X^{\tilde\tau_\ell}=\{X_{t\wedge\tilde\tau_\ell}\}_{t\in[0,T]}$ is an $\FF$-martingale for any $\ell\in\N^+$. Then  $X^{\tilde\tau_\ell\wedge \tau_k}=\{X_{t\wedge\tilde\tau_\ell\wedge \tau_k}\}_{t\in[0,T]}$ is an $\FF$-martingale for any $\ell\in\N^+$.
 By Lemma \ref{lem:CDCT}, $|X_{t\wedge \tau_k\wedge\tilde\tau_\ell}|\leq (X^{\tau_k})^*_t$, and from \eqref{bddqv}, 
\begin{equation*}
    X_{s\wedge\tau_k}=\lim_{\ell\to\infty}X_{s\wedge\tau_k\wedge\tilde\tau_\ell}=\lim_{\ell\to\infty}\overline\E\big[X_{t\wedge\tau_k\wedge\tilde\tau_\ell}|\F_s\big]=\overline\E\Big[\lim_{\ell\to\infty}X_{t\wedge\tau_k\wedge\tilde\tau_\ell}|\F_s\Big]=\overline\E\big[X_{t\wedge\tau_k}|\F_s\big].
\end{equation*}
Combined with \eqref{bddqv}, we have that $X^{\tau_k}=\{X_{t\wedge\tau_k}\}_{t\in[0,T]}$ is indeed a square integrable $\FF$-martingale.

{Now we claim that $(X^{\tau_k})^\G$ is a square integrable $\GG$-martingale. (Hence, by the \textbf{H} hypothesis and Remark \ref{rmk:hypothesis} also an $\FF$-martingale).
The integrability holds trivially since $X_t$ is integrable, and the martingale property holds since }
\begin{equation*}
\begin{aligned}
\overline\E\Big[X_{t\wedge\tau_k}^\G\big|\G_s\Big]&=\overline\E\Big[\overline\E\big[X_{t\wedge\tau_k}|\G_T\big]\big|\G_s\Big]=\overline\E\big[X_{t\wedge\tau_k}|\G_s\big]=\overline\E\Big[\overline\E\big[X_{t\wedge\tau_k}|\F_s\big]\big|\G_s\Big]
\\
&=\overline\E\big[X_{s\wedge\tau_k}|\G_s\big]=\overline\E\big[X_{s\wedge\tau_k}|\G_T\big]=X_{s\wedge\tau_k}^\G.
\end{aligned}
\end{equation*}
This completes the proof that $X^\G$ is an $\GG$-local martingale and \eqref{cpresult2}.

\subsection{Proof of Theorem \ref{thm:condprocess}}\label{sec:pfprocond}


For \eqref{cpresult3}, let $\sigma_m:0=t_0^m\leq t_1^m\leq...\leq t_{k_m}^m=t$ be a sequence of partitions which tends to identity in the sense of \cite[Page 64, definition]{protter2005stochastic} and satisfies
\begin{equation*}   
     \int_0^tZ_{s-}\d X^\G_s= \lim_{m\to\infty}\sum_{k=0}^{k_m-1}Z_{t_k^m}(X^\G_{t_{k+1}^m}-X^\G_{t_{k}^m}),\;\; \int_0^tZ_{s-}\d X'_s= \lim_{m\to\infty}\sum_{k=0}^{k_m-1}Z_{t_k^m}(X'_{t_{k+1}^m}-X'_{t_{k}^m}),
\end{equation*}
where the convergence is understood as a.s. convergence.  Then
\begin{equation*}
\begin{aligned}
 \int_0^tZ_{s-}\d X^\G_s&= \lim_{m\to\infty}\sum_{k=0}^{k_m-1}Z_{t_k^m}(X^\G_{t_{k+1}^m}-X^\G_{t_{k}^m})=\lim_{m\to\infty}\sum_{k=0}^{k_m-1}Z_{t_k^m}\E\Big[X_{t_{k+1}^m}-X_{t_{k}^m}\Big|\G_T\Big]
 \\
 &=\lim_{m\to\infty}\sum_{k=0}^{k_m-1}Z_{t_k^m}\overline\E\Big[X'_{t_{k+1}^m}-X'_{t_{k}^m}\Big|\F\Big]=\lim_{m\to\infty}\sum_{k=0}^{k_m-1}\E\Big[Z_{t_k^m}(X'_{t_{k+1}^m}-X'_{t_{k}^m})\Big|\F\Big].
\end{aligned}
\end{equation*}
Define ${Z}^{m}$ as
\begin{equation*}
    Z^{m}_t=Z_0\mathbbm1_{\{0\}}(t)+\sum_{k=0}^{k_m-1}Z_{t_k^m}\mathbbm1_{(t_k^m,t_{k+1}^m]}(t),
\end{equation*}
then $\int_0^tZ_{s-}\d X^\G_s= \lim_{m\to\infty}\E\big[\int_0^tZ_s^m\d X'_s\Big|\F\big],$ and 
\begin{equation*}
    \sum_{k=0}^{k_m-1}\overline\E\Big[Z_{t_k^m}\big(X'_{t_{k+1}^m}-X'_{t_{k}^m}\big)\Big]-\overline\E\bigg[\int_0^tZ_{s-}\d X'_s\bigg]=\overline\E\bigg[\int_0^t(Z_s^m-Z_{s-})\d X'_s\bigg].
\end{equation*}
Recall that $X'$ has the decomposition $X'=M'+V'$, with $M'$  a local martingale, $V'$ an adapted finite variation process, and 
$\bigg\|\sqrt{[M',M']_T}+\int_0^T\big|\d V'_s\big|\bigg\|_{L^p}<\infty.$
Therefore,
\begin{equation}\label{equtemp3}
    \overline\E\bigg[\bigg|\int_0^t(Z_s^m-Z''_{s-})\d X'_s\bigg|\bigg]\leq\overline\E\bigg[\bigg|\int_0^t(Z_s^m-Z_{s-})\d M'_s\bigg|\bigg]+\overline\E\bigg[\bigg|\int_0^t(Z_s^m-Z_{s-})\d V'_s\bigg|\bigg].
\end{equation}
For the first term in \eqref{equtemp3}, applying Burkholder-Davis-Gundy inequality to get
\begin{equation*}
    \begin{aligned}
    \overline\E\bigg[\bigg|\int_0^t(Z_s^m-Z_{s-})\d M'_s\bigg|\bigg]&\leq \overline\E\bigg[\sup_{0\leq s\leq t}\bigg|\int_0^s(Z_u^m-Z_{u-})\d M'_u\bigg|^p\bigg]
    \\
    &\leq c_1\overline\E\bigg[\bigg(\int_0^t(Z_u^m-Z_{u-})^2\d [M',M']_s\bigg)^{p/2}\bigg]
    \end{aligned}
\end{equation*}
for some constant $c_1$. Note that $\int_0^t(Z_u^m-Z_{u-})^2\d [M',M']_s\leq 4\sup_{0\leq s\leq t}(Z_s)^2[M',M']_t$, and 
\begin{equation*}   
    \begin{aligned}
    &\overline\E\Big[\Big(\sup_{0\leq s\leq t}(Z_s)^2[M',M']_t\Big)^{1/2}\Big]\leq\bigg(\overline\E\Big[\sup_{0\leq s\leq t}(Z_s)^{q}\Big]\overline\E\big[[M',M']^{p/2}_t\big]\bigg)^{1/2}\leq C\sqrt{\|Z\|^q_{\H^q}\overline\E\big[[M',M']_t^{p/2}\big]}<\infty.
    \end{aligned}
\end{equation*}
Since $Z$ is  c\`adl\`ag,  $Z^m_s\to Z_{s-}, a.s.$ and $|Z^m_s-Z_s|\leq \sup_{0\leq u\leq s}|Z_u|\in S_q$; and Lemma \ref{lem:CDCT} leads to
 \begin{equation*}
    \overline\E\bigg[\int_0^t(Z_s^m-Z_{s-})\d M'_s\bigg]\to 0.
 \end{equation*}
 For the second term in \eqref{equtemp3}, we have
 \begin{equation*}
    \overline\E\bigg[\bigg|\int_0^t(Z_s^m-Z_{s-})\d V'_s\bigg|\bigg]\leq \overline\E\bigg[\int_0^t|Z_s^m-Z_{s-}|\big|\d V'_s\big|\bigg].
 \end{equation*}
 Note that $\int_0^t|Z_s^m-Z_{s-}|\big|\d V'_s\big|\leq 2\sup_{0\leq s\leq t}|Z_s|\int_0^t|\d V_t|$, and similarly, we have
 \begin{equation*}
    \overline\E\bigg[\bigg|\int_0^t(Z_s^m-Z_{s-})\d V'_s\bigg|\bigg]\to0,
 \end{equation*}
 and
 \begin{equation*}
    \lim_{m\to\infty}\overline\E\bigg[\bigg|\int_0^tZ_s^m\d X'_s-\int_0^tZ_{s-}\d X'_s\bigg|\bigg]=0.
 \end{equation*}
 Now define 
 \begin{equation*}
 R^m:=\overline\E\bigg[\int_0^tZ^m_{s-}\d X'_s\Big|\F\bigg], \;R:=\overline\E\bigg[\int_0^tZ_{s-}\d X'_s\Big|\F\bigg],
 \end{equation*}
 for all $A\in\F$, we have
 \begin{equation*}
 \begin{aligned}
    \overline\E[(R-R_m)\mathbbm1_A]=\overline\E\bigg[\bigg(\int_0^tZ_s^m\d X'_s-\int_0^tZ_{s-}\d X'_s\bigg)\mathbbm1_A\bigg]\leq \overline\E\bigg[\bigg|\int_0^tZ_s^m\d X'_s-\int_0^tZ_{s-}\d X'_s\bigg|\bigg]\to0.
    \end{aligned}
 \end{equation*}
 Let $A=\{R-R_m>\epsilon\}$, we have
 \begin{equation*}
    \overline\P\big(R_m-R>\epsilon\big)\leq \frac{\overline\E[(R_m-R)\mathbbm1_A]}{\epsilon}\to0,
 \end{equation*}
 and similarly  
 \begin{equation*}
    \overline\P\big(R_m-R<\epsilon\big)\to0.
 \end{equation*}
 Therefore, $R_m\to R$ in probability. Meanwhile, $R_m\to \int_0^tZ_{s-}\d X^\G_s$ a.s., hence 
 \begin{equation*}
     \int_0^tZ_{s-}\d X^\G_s=\overline\E\bigg[\int_0^tZ^m_{s-}\d X'_s\Big|\F\bigg].
 \end{equation*}

For \eqref{cpresult4}, let us first prove that 
\begin{equation*}
    [X^\G,Y^\G]_t=\overline\E\Big[[X'',Y']_t\Big|\G\Big].
\end{equation*}
By the definition of the quadratic variation,
\begin{equation*}
    \big[X^\G,Y^\G\big]_t=X^\G_tY^\G_t-\int_0^tX^\G_{s-}\d Y^\G_s-\int_0^tY^\G_{s-}\d X^\G_s,
\end{equation*}
where the last two terms are well defined because of \eqref{cpresult2}. The first term $X^\G_tY^\G_t$ is
\begin{equation*}
    X^\G_tY^\G_t=\overline\E\big[X_t|\G_T\big]\overline\E\big[Y_t|\G_T\big]=\overline\E\big[X''_t|\F\big]\overline\E\big[Y'_t|\F\big]=\overline\E\big[X''_tY'_t|\F\big],
\end{equation*}
where the last equation holds due to \eqref{conditionindep} in Proposition \ref{prop:condcopy}. For the second term $\int_0^tX^\G_{s-}\d Y^\G_s$, let $\sigma_m:0=t_0^m\leq t_1^m\leq...\leq t_{k_m}^m=t$ be a sequence of partitions  tending to  identity in the sense of \cite[Page 64, definition]{protter2005stochastic} and satisfying
\begin{equation*}   
     \int_0^tX^\G_{s-}\d Y^\G_s= \lim_{m\to\infty}\sum_{k=0}^{k_m-1}X^\G_{t_k^m}(Y^G_{t_{k+1}^m}-Y^\G_{t_{k}^m}),\;\; \int_0^tX''_{s-}\d Y'_s= \lim_{m\to\infty}\sum_{k=0}^{k_m-1}X''_{t_k^m}(Y'_{t_{k+1}^m}-Y'_{t_{k}^m}),
\end{equation*}
where the convergence is understood as a.s. convergence. Therefore,
\begin{equation*}  
    \begin{aligned}
  \int_0^tX^\G_{s-}\d Y^\G_s&= \lim_{m\to\infty}\sum_{k=0}^{k_m-1}X^\G_{t_k^m}(Y^\G_{t_{k+1}^m}-Y^\G_{t_{k}^m})=\lim_{m\to\infty}\sum_{k=0}^{k_m-1}\E\big[X_{t_k^m}|\G_T\big]\overline\E\big[Y_{t_{k+1}^m}-Y_{t_{k}^m}|\G_T\big]
  \\
  &=\lim_{m\to\infty}\sum_{k=0}^{k_m-1}\E\big[X''_{t_k^m}|\F\big]\E\big[Y'_{t_{k+1}^m}-Y'_{t_{k}^m}|\F\big]=\lim_{m\to\infty}\sum_{k=0}^{k_m-1}\overline\E\big[X''_{t_k^m}\big(Y'_{t_{k+1}^m}-Y'_{t_{k}^m}\big)|\F\big],
  \end{aligned}
\end{equation*}
where the last equation holds due to \eqref{conditionindep} in Proposition \ref{prop:condcopy}.
Similar to the proof of \eqref{cpresult3}, we  conclude that 
\begin{equation*}
    \int_0^tX^\G_{s-}\d Y^\G_s=\overline\E\bigg[\int_0^tX''_{s-}\d Y'_s\bigg|\F\bigg],
\end{equation*}
and similarly
\begin{equation*}
    \int_0^tY^\G_{s-}\d X^\G_s=\overline\E\bigg[\int_0^tY'_{s-}\d X''_s\bigg|\F\bigg].
\end{equation*}
Therefore
\begin{equation*}
    [X^\G,Y^\G]_t=\overline\E\Big[[X'',Y']_t\Big|\F\Big].
\end{equation*}
Meanwhile, we can further assume that the partition $\sigma_m:0=t_0^m\leq t_1^m\leq...\leq t_{k_m}^m=t$  satisfies
\begin{equation*}   
     \int_0^tZ_{s-}\d [X^\G,Y^\G]_s= \lim_{m\to\infty}\sum_{k=0}^{k_m-1}Z_{t_k^m}\big([X^\G,Y^\G]_{t_{k+1}^m}-[X^\G,Y^\G]_{t_{k}^m}\big),
\end{equation*}
and 
\begin{equation*}
\int_0^tZ_{s-}\d [X'',Y']_s= \lim_{m\to\infty}\sum_{k=0}^{k_m-1}Z_{t_k^m}([X'',Y']_{t_{k+1}^m}-[X'',Y']_{t_{k}^m}).
\end{equation*}
Repeat the same argument as before and notice that $$\big|\int_0^t(Z_s^m-Z_{s-})\d[X'',Y']_s\big|\leq 2\sup_{0\leq s\leq t}|Z_s|[X'',Y']_t. $$ Then we have 
\begin{equation*}
    \begin{aligned}
\overline\E\bigg[\sup_{0\leq s\leq t}|Z_s|[X'',Y']_s\bigg]&\leq \overline\E[(\sup_{0\leq s\leq t}|Z_s|)^r]^{1/r}\overline\E\Big[[X'',X'']_t^p\Big]^{1/p}\overline\E\Big[[Y',Y']_t^q\Big]^{1/q}
\\
&\leq C\|Z\|_{H_r}\|X\|_{H_p}\|Y\|_{H_q},
\end{aligned}
\end{equation*}
where the first inequality is by Cauchy inequality and Kunita-Watanabe inequality with $C$  a generic constant. Now repeating the arguments in \eqref{cpresult3}, we achieve
\begin{equation*}
    \int_0^t Z_{s-}\d \big[X^\G,Y^\G\big]_t=\overline\E\bigg[\int_0^tZ_{s-}\d\big[X'',Y'\big]_s\bigg|\F\bigg],
\end{equation*}
and similarly, 
\begin{equation*}
    \int_0^t Z_{s-}\big[X^\G,Y\big]_s=\overline\E\bigg[\int_0^t Z_{s-}\big[X',Y\big]_s\bigg|\F\bigg],
\end{equation*}
which completes the  proof.\qed

{  
\section{Application to control problems with partial observed jump-diffusions}\label{sec:appl}
We now apply  our result (Theorem \ref{mainthm}) to analyze control problems with partially observed jump-diffusions. We will derive an appropriate form of the verification theorem and recover the results in \cite{bandini2019randomized} as  special cases. 

Consider a complete filtered space $(\Omega,\F,\FF=(\F_t)_{t\in[0,T]},\P)$ with a sub-filtration $\GG=(\G_t)_{t\in[0,T]}\subset\FF$ satisfying the compatibility condition as in Assumption \ref{asp:filt}. We define as an admissible control process any $\GG$-progressive process $\alpha$ with values in some given Borel space $A$. The set of admissible control processes is denoted by $\A^\GG$. 

Given a square-integrable initial condition $\bm \xi\in\R^n$, $d$-dimensional $\FF$-Brownian motion $\bm B$, an independent Poisson random measure $N$ with a L\'evy measure $\nu$, with $\widetilde N(\ d\bm\theta,\d t):=N(\d\bm\theta,\d t)-\nu(\d\bm\theta)\d t$ the compensated Poisson random measure and $\alpha \in \A^\GG$, the $n$-dimensional controlled state equation is defined by
\begin{align}\label{controlleddiffusion}
\d \bm X_t^{\alpha}=\bm b\big(\bm X_t^{\alpha},\alpha_t\big)\d t+\bm \sigma\big(\bm X_{t}^{\alpha},\alpha_t\big)\d \bm B_t+\int_{\R^q}\bm \beta(\bm X^{\alpha}_{t-},\alpha_{t-},\bm\theta)\widetilde N(\d\bm\theta,\d t),\text{ for }0\leq t\leq T,\;\bm X^\alpha_0=\bm \xi.
\end{align}
The coefficients $\bm b$ and $\bm \sigma$ are deterministic measurable functions from $\R^n \times A$ into $\R^n$ and $\R^{n\times d}$, respectively, and the coefficient  $\bm\beta$ are deterministic measurable functions from $\R^n \times A\times\R^q$ into $\R^n$ and $\R^{n\times d}$  assumed to satisfy the following standing assumptions: there exists some positive constant $C_1$ such that for all $\bm x,\bm x'\in\R^n$, $a\in A$,
\begin{align*}
    \big|\bm b(\bm x,a)-\bm b(\bm x',a)\big|+\big|\bm \sigma(\bm x,a)-\bm \sigma(\bm x',a)\big|+\int_{\R^q}\big|\bm\beta(\bm x,a,\bm\theta)-\bm\beta(\bm x',a,\bm\theta)\big|\nu(\d\bm\theta)&\leq C_1|\bm x-\bm x'|;
    \\
    \big|\bm b(\bm 0,a)\big|+\big|\bm \sigma(\bm 0,a)\big|+\int_{\R^q}\big|\bm\beta(\bm 0,a,\bm\theta)\big|\nu(\d\bm\theta)&\leq C_1.
\end{align*}
It is clear from  standard arguments that there exists a unique strong solution $\{\bm X_t^{\alpha}\}_{t\in[0,T]}$ to \eqref{controlleddiffusion}, which is $\FF$-adapted and satisfies 
$$
\sup_{\alpha\in\A^\GG}\E\bigg[\sup_{0\leq t\leq T} \big|\bm X_t^\alpha\big| ^2 \bigg]\leq  C\Big(1+¯\E\big[|\bm\xi|^2\big]\Big) < \infty,$$
for some positive constant $C$. 

The goal is to maximize, over all admissible control processes $\alpha \in \A^\GG$, the following objective:
\begin{align*}
    \J(\alpha):=\E\bigg[\int_0^Tf(\bm X_t^\alpha,\alpha_t)\d t+g(\bm X_T^\alpha)\bigg].
\end{align*}
Here $f:\R^n \times A \to \R$, and $g:\R^n \to \R$ are continuous functions satisfying the quadratic growth condition:
there exists some positive constant $C_2$ such that for all $(\bm x,a) \in \R^n \times A$, 
$$\big|f(\bm x,a)\big| + \big|g(\bm x)\big| \leq C_2 \big(1+|\bm x|^2\big) .$$
We can now define the objective function starting from time $t$, and for $\mu\in \PP_2(\R^d)$ by
$$
J(t,\alpha,\mu):=\E\bigg[\int_t^Tf(\bm X_s^\alpha,\alpha_s)\d s+g(\bm X_T^\alpha)\Big|\text{Law}(\bm X^\alpha_t|\G_T)=\text{Law}(\bm X^\alpha_t|\G_t)=\mu\bigg],
$$
and the corresponding value function by 
$$
V(t,\mu):=\sup_{\alpha}J(t,\alpha,\mu).
$$
It is not hard to see that both $J$ and $V$ satisfy the quadratic growth estimate:
$$
|J(t,\alpha,\mu)|\leq |V(t,\mu)|\leq C\Big(1+¯\E\big[|\bm\xi|^2\big]\Big)
$$
for all $\mu\in\PP_2(\R^d),\alpha\in\A^\G$ for some constant $C$. 

Note that \cite{bandini2019randomized} considers the case where the information $\GG$ is a filtration generated by a lower-dimensional Brownian motion $\bm W$ such that $\bm B=(\bm V,\bm W)$ and $\bm V$ independent of $\bm W$. This corresponds to the special case of \eqref{controlleddiffusion} with $\bm\beta =0$. In this case,  a form of It\^o's formula without proof is presented 
(see (5.7) of \cite{bandini2019randomized}), from which a verification theorem for the control problem with partial observation is derived.  Here we will derive rigorously,   as a corollary of Theorem \ref{mainthm}, a generalization of 
  (5.7) in 
\cite{bandini2019randomized} with a general form of $\GG$ satisfying the compatibility condition.

\begin{corollary}
[It\^o's formula for flows of partial observed jump-diffusion]
   Consider a complete filtered space $(\Omega,\F,\FF=(\F_t)_{t\in[0,T]},\P)$ with a sub-filtration $\GG=(\G_t)_{t\in[0,T]}$ satisfying the compatibility condition in Assumption \ref{asp:filt}. Let $\alpha\in\A^\GG$ be an admissible control and  the  $n$-dimensional controlled state $\bm X_t^\alpha$ satisfying \eqref{controlleddiffusion}, and set $\mu_t=$Law$(\bm X^\alpha_t|\G_t)$. Then, for $\Phi\in C^{2}(\mathcal{P}_2(\R^n))$, we have
   \begin{equation}
   \begin{aligned}
   &\Phi(\mu_t)-\Phi(\mu_0)
   \\
   =&\;\overline\E\Bigg[\int_{0}^t\bigg(\Big(\nabla_{\bm x_1}\frac{\delta \Phi}{\delta\mu}\big(\mu_{s},\bm X^{\alpha,1}_{s}\big)\cdot\bm b(\bm X^{\alpha,1}_s,\alpha_s)+\frac12\nabla^2_{\bm x_1\bm x_1}\frac{\delta \Phi}{\delta\mu}\big(\mu_{s},\bm X^{\alpha,1}_{s}\big):\bm \sigma\bm \sigma^\top\big(\bm X^{\alpha,1}_s,\alpha_s\big)\Big)\d s
   \\
   &\qquad\qquad+\frac12\nabla^2_{\bm x_1\bm x_2}\frac{\delta^2 \Phi}{(\delta\mu)^2}\big(\mu_{s},\bm X^{\alpha,1}_{s},\bm X^{\alpha,2}_{s}\big):\bm \sigma\big(\bm X^{\alpha,1}_s,\alpha_s\big)\bm \sigma^\top\big(\bm X^{\alpha,2}_s,\alpha_s\big)\d [\bm B^1,\bm B^2]_s
   \\
   &\qquad\qquad+\nabla_{\bm x_1}\frac{\delta \Phi}{\delta\mu}\big(\mu_{s},\bm X^{\alpha,1}_{s}\big)\cdot\bm \sigma(\bm X^{\alpha,1}_s,\alpha_s)\d \bm B^1_s
   \bigg)
   \\
   &+\sum_{0<s\leq t}\bigg(\frac12\bigg(\frac{\delta^2 \Phi}{(\delta\mu)^2}\big(\mu_{s-},\bm X^{\alpha,1}_{s},\bm X^{\alpha,2}_{s}\big)-\frac{\delta^2 \Phi}{(\delta\mu)^2}\big(\mu_{s-},\bm X^{\alpha,1}_{s-},\bm X^{\alpha,2}_{s}\big)
    \\
    &\qquad\qquad\qquad-\frac{\delta^2 \Phi}{(\delta\mu)^2}\big(\mu_{s-},\bm X^{\alpha,1}_{s},\bm X^{\alpha,2}_{s-}\big)+\frac{\delta^2 \Phi}{(\delta\mu)^2}\big(\mu_{s-},\bm X^{\alpha,1}_{s-},\bm X^{\alpha,2}_{s-}\big)\bigg)
    \\
    &\quad\qquad\qquad+\frac{\delta \Phi}{\delta\mu}\big(\mu_{s-},\bm X^{\alpha,1}_s\big)-\frac{\delta \Phi}{\delta\mu}\big(\mu_{s-},\bm X^{\alpha,1}_{s-}\big)\bigg)\mathbbm {1}_{\{\mu_s=\mu_{s-}\}}
    \bigg|\F\bigg]
    \\
   &+\sum_{0<s\leq t}\Big(\Phi(\mu_s)-\Phi(\mu_{s-})\Big),
   \end{aligned}
   \end{equation}
   for all $t\in[0,T]$, where $(\bm X^{\alpha,1},\bm B^1,N^1),(\bm X^{\alpha,1},\bm B^2,N^2)$ satisfying for $0\leq t\leq T$,
   \begin{align*}
   \d \bm X_t^{\alpha,1}=\bm b\big(\bm X_t^{\alpha,1},\alpha_t\big)\d t+\bm \sigma\big(\bm X_t^{\alpha,1},\alpha_t\big)\d \bm B^1_t+\int_{\R^q}\bm \beta(\bm X^{\alpha,1}_{t-},\alpha_{t-},\bm\theta)N^1(\d\bm\theta,\d t),
   \\
   \d \bm X_t^{\alpha,2}=\bm b\big(\bm X_t^{\alpha,2},\alpha_t\big)\d t+\bm \sigma\big(\bm X_t^{\alpha,2},\alpha_t\big)\d \bm B^2_t+\int_{\R^q}\bm \beta(\bm X^{\alpha,2}_{t-},\alpha_{t-},\bm\theta)N^2(\d\bm\theta,\d t),
   \end{align*}
   are two conditionally independent copies of $(\bm X^{\alpha},\bm B,N)$ given the sub $\sigma$-algebra $\G_T$ defined in an enlarged probability space $(\overline\Omega,\overline{\F},\overline{\P})$, and $\overline{\E}[\cdot|\F]$ is the conditional expectation given $\F$ in the enlarged probability space.    
\end{corollary}
\noindent\textbf{HJB equations and verification theorem.}  In the case where $\GG$ is the filtration generated by Brownian motion $\bm W$, with $\bm B=(\bm V,\bm W)$ and $\bm V \indep\bm W$, we can rewrite the dynamic by
$$
\d \bm X_t^{\alpha}=\bm b\big(\bm X_t^{\alpha},\alpha_t\big)\d t+\bm \sigma^{\bm V}\big(\bm X_{t}^{\alpha},\alpha_t\big)\d \bm V_t+\bm \sigma^{\bm W}\big(\bm X_{t}^{\alpha},\alpha_t\big)\d \bm W_t+\int_{\R^q}\bm \beta(\bm X^\alpha_{t-},\alpha_{t-},\bm\theta)\widetilde N(\d\bm\theta,\d t),$$
and the conditionally independent copies
\begin{align*}
\d \bm X_t^{\alpha,1}=\bm b\big(\bm X_t^{\alpha,1},\alpha_t\big)\d t+\bm \sigma^{\bm V}\big(\bm X_{t}^{\alpha,1},\alpha_t\big)\d \bm V^1_t+\bm \sigma^{\bm W}\big(\bm X_{t}^{\alpha,1},\alpha_t\big)\d \bm W_t+\int_{\R^q}\bm \beta(\bm X^{\alpha,1}_{t-},\alpha_{t-},\bm\theta)\widetilde N^1(\d\bm\theta,\d t),
\\
\d \bm X_t^{\alpha,2}=\bm b\big(\bm X_t^{\alpha,2},\alpha_t\big)\d t+\bm \sigma^{\bm V}\big(\bm X_{t}^{\alpha,2},\alpha_t\big)\d \bm V^2_t+\bm \sigma^{\bm W}\big(\bm X_{t}^{\alpha,2},\alpha_t\big)\d \bm W_t+\int_{\R^q}\bm \beta(\bm X^{\alpha,2}_{t-},\alpha_{t-},\bm\theta)\widetilde N^2(\d\bm\theta,\d t).
\end{align*}
Note that under this setting, at time $s$, $\bm X^{\alpha,1}_{s-}$ and  $\bm X^{\alpha,2}_{s-}$ has a jump $\bm \beta(\bm X^{\alpha,1}_{s-},\alpha_{s-},\theta^1),\bm \beta(\bm X^{\alpha,2}_{s-},\alpha_{s-},\theta^2)$ with Poisson random measure $N^1(\d \theta^1,\d s)\times N^2(\d \theta^2,\d s)$, we can simplify the It\^o's formula by
    \begin{equation}\label{ito_jumppartial}
    \begin{aligned}
    &\Phi(\mu_t)-\Phi(\mu_0)
    \\
    =&\;\int_{0}^t\overline\E\Big[\nabla_{\bm x_1}\frac{\delta \Phi}{\delta\mu}\big(\mu_{s},\bm X^{\alpha,1}_{s}\big)\cdot\bm b(\bm X^{\alpha,1}_s,\alpha_s)+\frac12\nabla^2_{\bm x_1\bm x_1}\frac{\delta \Phi}{\delta\mu}\big(\mu_{s},\bm X^{\alpha,1}_{s}\big):\bm \sigma\bm \sigma^\top\big(\bm X^{\alpha,1}_s,\alpha_s\big)
    \\
    &\qquad\qquad+\frac12\nabla^2_{\bm x_1\bm x_2}\frac{\delta^2 \Phi}{(\delta\mu)^2}\big(\mu_{s},\bm X^{\alpha,1}_{s},\bm X^{\alpha,2}_{s}\big):\bm \sigma^{\bm W}\big(\bm X^{\alpha,1}_s,\alpha_s\big){\bm \sigma^{\bm W}}^\top\big(\bm X^{\alpha,2}_s,\alpha_s\big)\Big|\F\Big]\d s
    \\
    &\quad +\frac12\int_0^t\int_{\R^{2q}}\overline\E\Big[\frac{\delta^2 \Phi}{(\delta\mu)^2}\big(\mu_{s},\bm X^{\alpha,1}_{s}+\bm \beta(\bm X^{\alpha,1}_{s},\alpha_{s},\bm \theta^1),\bm X^{\alpha,2}_{s}+\bm \beta(\bm X^{\alpha,2}_{s},\alpha_{s},\bm \theta^2)\big)
    \\
    &-\frac{\delta^2 \Phi}{(\delta\mu)^2}\big(\mu_{s},\bm X^{\alpha,1}_{s},\bm X^{\alpha,2}_{s}+\bm \beta(\bm X^{\alpha,2}_{s},\alpha_{s},\bm \theta^2)\big)-\frac{\delta^2 \Phi}{(\delta\mu)^2}\big(\mu_{s},\bm X^{\alpha,1}_{s}+\bm \beta(\bm X^{\alpha,1}_{s},\alpha_{s},\bm \theta^1),\bm X^{\alpha,2}_{s}\big)
    \\
    &\qquad\qquad\qquad\qquad+\frac{\delta^2 \Phi}{(\delta\mu)^2}\big(\mu_{s},\bm X^{\alpha,1}_{s},\bm X^{\alpha,2}_{s}\big)\Big|\F\Big]\nu(\d\bm\theta^1)\nu(\d\bm\theta^2)\d s
    \\
    &\quad\qquad+\int_0^t\int_{\R^{q}}\overline\E\Big[\frac{\delta \Phi}{\delta\mu}\big(\mu_{s-},\bm X^{\alpha,1}_s+\bm \beta(\bm X^{\alpha,1}_{s},\alpha_{s},\bm \theta)\big)-\frac{\delta \Phi}{\delta\mu}\big(\mu_{s-},\bm X^{\alpha,1}_{s}\big)
    \\
    &\qquad\qquad\qquad\qquad\quad-\nabla_{\bm x_1}\frac{\delta \Phi}{\delta\mu}\big(\mu_{s-},\bm X^{\alpha,1}_{s}\big)\cdot \bm \beta(\bm X^{\alpha,1}_{s},\alpha_{s},\bm \theta)\Big|\F\Big]\nu(\d \bm\theta)\d s
    \\
    &+\int_0^t\overline \E\Big[\nabla_{\bm x_1}\frac{\delta \Phi}{\delta\mu}\big(\mu_{s},\bm X^{\alpha,1}_{s}\big)\cdot\bm \sigma^{\bm W}(\bm X^{\alpha,1}_s,\alpha_s)\Big|\F\Big]\d \bm W_s,
    \end{aligned}
    \end{equation}
Proceeding by the same argument as in \cite{cosso2019zero}, we have the dynamic programming principle  for the control problem with partial observation. In particular, for all $0\leq t<s\leq T$,
$$
V(t,\mu)=\inf_{\alpha\in \A^G}\E\bigg[\int_t^sf(\bm X^\alpha_u,\alpha_u)\d u+V\big(s,\text{Law}(\bm X_s^\alpha|\G_T)\big)\bigg|\text{Law}(\bm X_t^\alpha|\G_T)=\mu\bigg].
$$
We have now the ingredients for deriving the Bellman equation. For $\Phi \in C^{2,2}(\PP_2 (\R^n))$, $\mu\in\PP_2 (\R^n)$ and $a\in A$, the operator $\mathcal{L}^{\mu,a}$ maps from $C^{2}(\PP_2 (\R^n))$ to a function $\R^n\to\R$ defined by
\begin{align*}
\big(\mathcal{L}^{\mu,a}\Phi\big) (\bm x):= &\;\nabla_{\bm x_1}\frac{\delta \Phi}{\delta\mu}\big(\mu,\bm x\big)\cdot \bm b(\bm x,a) +\frac12\nabla^2_{\bm x_1\bm x_1}\frac{\delta \Phi}{\delta\mu}\big(\mu,\bm x\big):\bm \sigma\bm \sigma^\top\big(\bm x,a\big)
\\
&+\int_{\R^{q}}\bigg(\frac{\delta \Phi}{\delta\mu}\big(\mu,\bm x+\bm \beta(\bm x,a,\bm \theta)\big)-\frac{\delta \Phi}{\delta\mu}\big(\mu,\bm x\big)-\nabla_{\bm x_1}\frac{\delta \Phi}{\delta\mu}\big(\mu,\bm x\big)\cdot \bm \beta(\bm x,a,\bm \theta)\bigg)\nu(\d \bm\theta).
\end{align*} 
Furthermore, for $\Phi \in C^{2,2}(\PP_2 (\R^n))$, $\mu\in\PP_2 (\R^n)$, and $a\in A$, the operator $\mathcal{M}^{\mu,a}$ maps from $C^{2,2}(\PP_2 (\R^n))$ to a function $\R^{2n}\to\R$ defined by
\begin{align*}
\big(\mathcal{M}^{\mu,a}\Phi\big) (\bm x,\bm x'):= &\;\frac12\nabla^2_{\bm x_1\bm x_2}\frac{\delta^2 \Phi}{(\delta\mu)^2}\big(\mu,\bm x,\bm x'\big):\bm \sigma^{\bm W}\big(\bm x,a\big){\bm \sigma^{\bm W}}^\top\big(\bm x',a\big)\Big)
\\
&+\int_{\R^{2q}}\bigg(\frac{\delta^2 \Phi}{(\delta\mu)^2}\big(\mu,\bm x+\bm \beta(\bm x,a,\bm \theta^1),\bm x'+\bm \beta(\bm x',a,\bm\theta^2)\big)
\\
&-\frac{\delta^2 \Phi}{(\delta\mu)^2}\big(\mu,\bm x,\bm x'+\bm \beta(\bm x',a,\bm\theta^2)\big)-\frac{\delta^2 \Phi}{(\delta\mu)^2}\big(\mu,\bm x+\bm \beta(\bm x,a,\bm \theta^1),\bm x'\big)
\\
&\qquad\qquad+\frac{\delta^2 \Phi}{(\delta\mu)^2}\big(\mu,\bm x,\bm x'\big)\bigg)\nu(\d \bm\theta^1)\nu(\d \bm\theta^2).
\end{align*}
Now, the Bellman equation takes the following form:
\begin{equation}\label{hjb_jump}
    \begin{cases}
        \partial_t v(t,\mu)+\sup_{a\in A}\Big(\big\langle \mu,f(\cdot,a)+\mathcal{L}^{\mu,a}v(t,\mu)(\cdot)\big\rangle+\big\langle\mu\otimes\mu,\mathcal{M}^{\mu,a}v(t,\mu)(\cdot,\cdot)\big\rangle\Big)=0,
        \\
        v(T,\mu)= \big\langle \mu,g\big\rangle.
    \end{cases}
\end{equation}
The Bellman equation \eqref{hjb_jump} is validated by the following verification theorem, which can be proved by  following the similar argument as in \cite{bandini2019randomized}, with  the  It\^o's formula \eqref{ito_jumppartial}.

\begin{theorem}\label{thm:verification}
    Let $v$ be a real-valued function in $C^{1,2}([0,T]\times \PP_2 (\R^n))$, i.e., $v$ is continuous on $[0,T]\times \PP_2 (\R^n)$, $v(t,\cdot) \in C^2(\PP_2 (\R^n))$ for all $t \in [0,T]$, and $v(\cdot,\mu) \in C^1([0,T))$ for all $\mu \in \PP_2 (\R^n)$. Suppose that $v$ is a solution to the Bellman equation \eqref{hjb_jump}, and there exists for all $(t,\mu) \in [0,T) \times \PP_2 (\R^n)$ an element $\hat a(t,\mu)$ attaining the supremum in \eqref{hjb_jump} such that for a given  square integrable initial condition, the stochastic McKean-Vlasov SDE 
    \begin{align*}
    \d \hat{\bm X}_t=&\bm b\Big(\hat{\bm X}_t,\hat a\big(t,\text{Law}(\hat {\bm X_t}|\bm W)\big)\Big)\d t+\bm \sigma^{\bm V}\Big(\hat{\bm X}_t,\hat a\big(t,\text{Law}(\hat {\bm X_t}|\bm W)\big)\Big)\d \bm V_t
    \\
    &+\bm \sigma^{\bm W}\Big(\hat{\bm X}_t,\hat a\big(t,\text{Law}(\hat {\bm X_t}|\bm W)\big)\Big)\d \bm W_t+\int_{\R^q}\bm \beta\Big(\hat{\bm X}_t,\hat a\big(t,\text{Law}(\hat {\bm X_t}|\bm W)\big),\bm\theta\Big)N(\d\bm\theta,\d t)
    \end{align*}
    admits a unique solution $\hat{\bm X}$. Then, $v = V$ and the feedback control in $\A^\G$ defined by 
    $$\alpha^\star_s := \hat a\big(s,\text{Law}(\hat {\bm X_s}|\bm W)\big),\text{ for }t\leq s\leq T,$$
    is an optimal control for $V(t,\mu)$. That is, $V(t,\mu)=J(t,\alpha^\star,\mu)$.
\end{theorem}
Note that this generalizes the verification results in \cite{bandini2019randomized} to the case of jump-diffusions.
}

\bibliographystyle{plain}
\bibliography{references}

\end{document}